\renewcommand*\l@section{\@dottedtocline{1}{1.5em}{2.3em}}
\theoremstyle{plain}
\newtheorem{theorem}{Theorem}[section]
\newtheorem{proposition}[theorem]{Proposition}
\newtheorem{lemma}[theorem]{Lemma}
\newtheorem{corollary}[theorem]{Corollary}
\theoremstyle{definition}
\newtheorem{definition}{Definition}[section]
\newtheorem{example}{Example}[section]
\theoremstyle{remark}
 \newtheorem{remark}{Remark}[section]
\numberwithin{equation}{section}
\numberwithin{theorem}{section}
\begin{document}
    \begin{CJK*}{GBK}{kai}
    \CJKtilde

\begin{center}
{\Large {\textbf {The magnitude homology of a hypergraph
}}}
 \vspace{0.58cm}

Wanying Bi, Jingyan Li, Jie Wu

\bigskip

\bigskip

    \parbox{24cc}{{\small
{\textbf{Abstract}. The magnitude homology, introduced by R. Hepworth and S. Willerton, offers a topological invariant that enables the study of graph properties. Hypergraphs, being a generalization of graphs, serve as popular mathematical models for data with higher-order structures. In this paper, we focus on describing the topological characteristics of hypergraphs by considering their magnitude homology. We begin by examining the distances between hyperedges in a hypergraph and establish the magnitude homology of hypergraphs. Additionally, we explore the relationship between the magnitude and the magnitude homology of hypergraphs. Furthermore, we derive several functorial properties of the magnitude homology for hypergraphs. Lastly, we present the K\"{u}nneth theorem for the simple magnitude homology of hypergraphs.}

}}
\end{center}

\vspace{1cc}

\footnotetext[1]
{ {\bf 2020 Mathematics Subject Classification.}  	Primary  55N35, 05C65;  Secondary  55U25.
}

\footnotetext[2]{{\bf Keywords and Phrases.}   Hypergraph,  magnitude, magnitude homology, K\"{u}nneth theorem. }

\section{Introduction}
In everyday life, objects come in various sizes, and the same applies to mathematical objects, which also possess different sizes. For example, the size of a set is determined by its cardinality, subsets of $\mathbb{R}^n$  have volumes, vector spaces have dimensions, and topological spaces have Euler characteristics, among other examples. We can observe that these sizes satisfy the following two equations:
\begin{gather*}
  {\rm Size}(A\cup B)= {\rm Size}(A)+ {\rm Size}(B)- {\rm Size}(A\cap B) ,\\
   {\rm Size}(A\times B)=  {\rm Size}(A)\times {\rm Size}(B).
\end{gather*}
Let's consider whether it is possible to define such a size in a metric space. Tom Leinster, a topologist, was inspired by the Euler characteristic in topology and aimed to extend its application to metric spaces. Acknowledging the distinctions between metric spaces and topological spaces, he introduced a novel invariant called ``magnitude'' to describe the structure and complexity of metric spaces\cite{leinster2013magnitude}. The vision of ``magnitude'' was first introduced by T. Leinster in \cite{leinster2013magnitude}, which is analogous to the Euler characteristic of a category \cite{leinster2008euler}. Throughout the course of biological development, magnitude has made its presence known. A. R. Solow and S. Polasky introduced an invariant called ``effective number of species'' and is identical to magnitude \cite{solow1994measuring}. In 2009, T. Leinster stated and proved a new maximum entropy theorem, which also showed that in a steady state, the magnitude can be considered to be the maximum diversity, close to the maximum entropy\cite{leinster2009maximum}.

Now, let's consider the case of topological spaces $X$, where we often use the Euler characteristic to describe the size of a given space. However, the Euler characteristic is a relatively coarse invariant. An important improvement to measure the size of a space is the concept of ordinary homology, which is an algebraic invariant defined by a sequence of abelian groups $H_n(X)$. The Euler characteristic can be expressed as the alternating sum of the ranks of these homology groups. T. Leinster has indeed introduced the concept of magnitude, which determines the size of finite metric spaces. This prompts us to consider the possibility of developing a homology theory specifically for magnitude in finite metric spaces. Such a theory would capture additional geometric information beyond numerical magnitude, with magnitude being defined as the alternating sum of the ranks of the magnitude homology groups \cite{leinster2021magnitude,hepworth2017categorifying}. Next, we will focus on introducing the magnitude homology of graphs.

After defining the magnitude of finite metric spaces, T. Leinster firstly extended his consideration to the special case of graphs as metric spaces. Graphs, as a specific type of metric space where the distance between vertices is measured by the length of the shortest path, also serve as a framework for the development of the theory of magnitude \cite{leinster2019magnitude}. Subsequently, R. Hepworth and S. Willerton propose the magnitude homology of a graph as a categorification of magnitude \cite{hepworth2017categorifying}. After this paper, research on the magnitude homology of graphs gradually began. It is clear from \cite{hepworth2017categorifying} that the calculation of the magnitude homology of a graph is complex and difficult, hence the emergence of several technical methods for the calculation of the magnitude homology of graphs \cite{gu2018graph,sazdanovic2021torsion,asao2021geometric,bottinelli2021magnitude,asao2020geometric}. Some of the questions proposed by R. Hepworth and S. Willerton \cite{hepworth2017categorifying} are also addressed in these papers . For example, in 2018, Y. Gu answered in Appendix A of \cite{gu2018graph} whether graphs with the same magnitude and different magnitude homology exist; R. Sazdanovic and V. Summers conducted an analysis of the structure and implications of torsion in magnitude homology\cite{sazdanovic2021torsion}. Numerous other studies on magnitude homology in graphs are also currently underway.
In \cite{asao2021girth}, the magnitude homology of the graph is investigated concerning the relationship between its diagonality and girth. Y. Tajima and M. Yoshinaga investigate the relationship between the homotopy  type of the CW complex and the diagonality of magnitude homology groups \cite{tajima2021magnitude}. In 2023, Y. Asao constructed a spectral sequence whose first page is isomorphic to magnitude homology  \cite{asao2023magnitude}.

Higher-order interactions in complex networks are one of the most challenging scientific problems, and a great deal of research has been carried out based on this subject. In 2020, F. Battiston et al. gave a complete overview of the burgeoning field of networks and further discussed how to represent higher-order interactions \cite{battiston2020networks}. Some works on different frameworks for describing higher-order systems are also presented. Two models that are significant in higher-order interaction networks are also mentioned, that is, simplicial complexes and hypergraphs \cite{battiston2020networks}. Simplicial complexes and hypergraphs are  mathematical frameworks that can explicitly and naturally describe group interactions. Even though simplicial complexes solve some of the problems encountered with other low-dimensional representations, they are still limited by the requirement that all subfaces exist. Hypergraphs, as the generalization of abstract simplicial complex (remove the restrictions required by simplicial complex), provide the most general and unconstrained description of higher-order interactions. In recent years, works modeled on hypergraphs in complex networks have been developing rapidly \cite{alvarez2021evolutionary,battiston2021physics,lotito2022higher,contisciani2022inference,aksoy2020hypernetwork}. In 2021, the authors not only generalize BI (bipartite implementation) to a fully higher-order case but also serve as a theoretical basis for the study of higher-order cooperative games in uniform and heterogeneous hypergraphs, while also illustrating the impact of higher-order interactions in the evolutionary process \cite{alvarez2021evolutionary}. Data scientists have found that higher-order interactions can occur in larger groups by using hypergraphs as a model\cite{battiston2021physics}.

Research on the magnitude homology of graphs is still ongoing. However, little attention has been given to exploring the application of magnitude homology in the context of hypergraphs, which are higher-order extensions of graphs. This research gap serves as the primary focus of our study, as we aim to bridge this knowledge gap and investigate it in our paper.

In this paper, we first define the distance of the simplices or hyperedges on complexes and hypergraphs. Through this construction, we can define the magnitude and magnitude homology of hypergraphs. Given a hypergraph $\mathcal{H}$, the relationship between its magnitude and magnitude homology is obtained (see Theorem \ref{Eluer}).
\begin{theorem}
Let $\mathcal{H}$ be a hypergraph. Then
\begin{equation*}
  \sum\limits_{n,l\geq 0}(-1)^{n}\mathrm{rank}(\mathbf{MH}_{n,l}(\mathcal{H}))\cdot q^{l}=\#\mathcal{H}.
\end{equation*}
Here, $n\in \mathbb{Z},l\in \frac{1}{2}\mathbb{Z}$.
\end{theorem}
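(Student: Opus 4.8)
The plan is to follow the template of Hepworth and Willerton's proof that magnitude is the graded Euler characteristic of magnitude homology, transplanting it from a metric space of vertices to the space of hyperedges of $\mathcal{H}$ equipped with the distance $d$ introduced earlier. The starting point is the definition of $\#\mathcal{H}$ through the similarity matrix $Z$ whose $(\sigma,\tau)$ entry is $q^{d(\sigma,\tau)}$, with $\sigma,\tau$ ranging over the hyperedges, so that $\#\mathcal{H}=\sum_{\sigma,\tau}(Z^{-1})_{\sigma\tau}$. Writing $Z=I+(Z-I)$, where every off-diagonal entry of $Z-I$ is a positive power of $q$, I would expand the inverse as the Neumann series $Z^{-1}=\sum_{k\ge 0}(-1)^{k}(Z-I)^{k}$, interpreted as a formal power series in $q^{1/2}$.

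Next I would read off the coefficients of this expansion. The $(\sigma,\tau)$ entry of $(Z-I)^{k}$ is the sum of the monomials $q^{d(\sigma_0,\sigma_1)+\cdots+d(\sigma_{k-1},\sigma_k)}$ over tuples $(\sigma_0,\dots,\sigma_k)$ of hyperedges with $\sigma_0=\sigma$, $\sigma_k=\tau$, and consecutive entries distinct. Summing over all $\sigma,\tau$ and collecting terms by total length $l$ shows that the coefficient of $q^{l}$ in $\#\mathcal{H}$ equals $\sum_{k}(-1)^{k}N_{k,l}$, where $N_{k,l}$ is the number of such $k$-step chains of total length $l$. By the construction of the magnitude chain complex, $N_{k,l}$ is precisely $\mathrm{rank}(\mathbf{MC}_{k,l}(\mathcal{H}))$, whence $\#\mathcal{H}=\sum_{n,l}(-1)^{n}\mathrm{rank}(\mathbf{MC}_{n,l}(\mathcal{H}))\,q^{l}$.

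It then remains to replace chain ranks by homology ranks. For each fixed $l$ the differential preserves $l$, so $\mathbf{MC}_{*,l}(\mathcal{H})$ is a chain complex of free abelian groups; moreover each step of a chain contributes distance at least $\tfrac{1}{2}$, so $\mathbf{MC}_{n,l}(\mathcal{H})=0$ once $n>2l$, and the complex is bounded and finitely generated in every degree. The standard fact that the Euler characteristic of such a complex may be computed equally from chain ranks or from homology ranks gives $\sum_{n}(-1)^{n}\mathrm{rank}(\mathbf{MC}_{n,l})=\sum_{n}(-1)^{n}\mathrm{rank}(\mathbf{MH}_{n,l})$ for every $l$; multiplying by $q^{l}$ and summing over $l$ yields the asserted identity.

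The hard part will be the bookkeeping that legitimizes this term-by-term passage. I must verify that $d$ takes values in $\tfrac{1}{2}\mathbb{Z}_{>0}$ off the diagonal, so that the Neumann series converges as a formal series and each fixed power of $q$ receives only finitely many contributions, and that this same positivity forces the per-length complexes to be bounded as used above. Granting that the earlier sections supply these properties of $d$ together with the identification of $N_{k,l}$ with the chain-group rank, the remainder is the routine rank--nullity argument sketched here.
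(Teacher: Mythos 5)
Your proposal is correct, and its endgame coincides with the paper's: both identify $\mathrm{rank}(\mathbf{MC}_{k,l}(\mathcal{H}))$ with the number of tuples of adjacent distinct hyperedges of total length $l$, and both convert homology ranks into chain ranks via the invariance of the Euler characteristic, $\chi(\mathbf{MC}_{*,l})=\chi(\mathbf{MH}_{*,l})$ (the paper simply asserts this invariance; you justify it by exactly the right observations, namely that each $\mathbf{MC}_{*,l}$ is finitely generated and bounded, with $\mathbf{MC}_{n,l}=0$ for $n>2l$ since every step has length at least $\tfrac{1}{2}$). The genuine difference is how the expansion $\#\mathcal{H}=\sum_{k\ge 0}(-1)^{k}\sum q^{d(\sigma_0,\sigma_1)+\cdots+d(\sigma_{k-1},\sigma_k)}$ over tuples is obtained. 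You expand the inverse matrix directly as the Neumann series $Z^{-1}=\sum_{k\ge 0}(-1)^{k}(Z-I)^{k}$, valid coefficientwise in $\mathbb{Z}[\![\sqrt{q}]\!]$ because every off-diagonal entry of $Z-I$ is either $0$ (the convention $q^{\infty}=0$) or a positive power of $q^{1/2}$. The paper instead packages this identity as Proposition \ref{prop2.5}: it defines $\widetilde{w}_{\mathcal{H}}(\sigma)$ as the alternating sum over tuples starting at $\sigma$, verifies that this function satisfies the weighting equations, and invokes the uniqueness of weightings (Lemma \ref{lemma:weight}) to conclude $\#\mathcal{H}=\sum_{\sigma}\widetilde{w}_{\mathcal{H}}(\sigma)$. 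The two derivations are close cousins---verifying the weighting equation is precisely a row-by-row check that the Neumann series inverts $Z$---but yours is more direct and matrix-theoretic, in the spirit of Leinster's original argument for graphs, while the paper's reuses its weighting formalism and confines the algebraic subtleties (inverting $Z$ over $\mathbb{Q}(\sqrt{q})$ versus over $\mathbb{Z}[\![\sqrt{q}]\!]$) to Lemma \ref{lemma:weight}. Your route does need one small remark you only allude to: the inverse computed in $\mathbb{Q}(\sqrt{q})$ and the power-series inverse agree, e.g.\ by uniqueness of inverses after embedding both rings in the Laurent series field $\mathbb{Q}(\!(\sqrt{q})\!)$; with that noted, your argument goes through.
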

\noindent The functorial properties of magnitude homology of hypergraphs are also considered.

We introduce the magnitude chain complex of hypergraphs by employing tuples composed of hyperedges. However, computing this magnitude directly is always a challenging task due to the potentially large number of hyperedges involved. To address this issue, we propose a simplified version known as the simple magnitude homology, which lends itself better to computational analysis. Furthermore, the simple magnitude homology of hypergraphs can be seen as a generalization of the magnitude homology of graphs. Additionally, the simple magnitude homology can present a K\"{u}nneth formula. Through the Cartesian product of hypergraphs, we obtain the exterior product of simple magnitude chain complexes of hypergraphs. This, in turn, induces the exterior product of simple magnitude homology
$$
\Box: \mathrm{MH}_{*,*}(\mathcal{G})\otimes \mathrm{MH}_{*,*}(\mathcal{H})\longrightarrow \mathrm{MH}_{*,*}(\mathcal{G}\Box \mathcal{H}).
$$
And then the K\"{u}nneth theorem is constructed (see Theorem \ref{kunneth}).
\begin{theorem}
\textbf{\emph{(The K\"{u}nneth theorem for simple magnitude homology of hypergraphs)}}
Let $\mathcal{G}$ and $\mathcal{H}$ be hypergraphs. By the exterior product, we have a natural short exact sequence
\begin{equation*}
\begin{split}
      0\rightarrow \bigoplus\limits_{p+q=n}\mathrm{MH}_{p,*}(\mathcal{G})\otimes \mathrm{MH}_{q,*}(\mathcal{H})&\stackrel{\square}{\rightarrow}\mathrm{MH}_{n,*}(\mathcal{G}\Box\mathcal{H})\\
      &\rightarrow \bigoplus\limits_{p+q=n}\mathrm{Tor}(\mathrm{MH}_{p,*}(\mathcal{G}),\mathrm{MH}_{q-1,*}(\mathcal{H}))\rightarrow 0.
\end{split}
\end{equation*}
\end{theorem}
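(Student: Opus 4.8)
The plan is to reduce the statement to the classical algebraic K\"{u}nneth theorem for a tensor product of chain complexes of free abelian groups, the way Hepworth and Willerton handle the graph case. The essential reduction is a chain-level assertion: the simple magnitude chain complex of the Cartesian product $\mathcal{G}\Box\mathcal{H}$ is isomorphic, as a bigraded chain complex, to the tensor product of the simple magnitude chain complexes of the two factors. Writing $\mathrm{MC}_{*,*}$ for the simple magnitude chain complex, I would first establish
\[
\mathrm{MC}_{n,l}(\mathcal{G}\Box\mathcal{H}) \;\cong\; \bigoplus_{\substack{p+q=n\\ l_1+l_2=l}} \mathrm{MC}_{p,l_1}(\mathcal{G})\otimes \mathrm{MC}_{q,l_2}(\mathcal{H}),
\]
compatibly with differentials, where the right-hand differential is the graded Leibniz differential $d\otimes 1 + (-1)^{p}\,1\otimes d$. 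The map realizing this isomorphism is exactly the exterior product $\square$ described before the statement.

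The input needed for this step is that the distance in the Cartesian product is additive over the two factors, i.e. $d_{\mathcal{G}\Box\mathcal{H}}\bigl((g,h),(g',h')\bigr)=d_{\mathcal{G}}(g,g')+d_{\mathcal{H}}(h,h')$ for the hyperedge distance used here. This additivity forces the length grading $l$ to split as a contribution from $\mathcal{G}$ plus a contribution from $\mathcal{H}$, so that a generating tuple in $\mathcal{G}\Box\mathcal{H}$ of homological degree $n$ and length $l$ decomposes, after recording the order in which the $\mathcal{G}$-coordinate and the $\mathcal{H}$-coordinate change along the tuple, into a shuffle of a degree-$p$ length-$l_1$ tuple in $\mathcal{G}$ and a degree-$q$ length-$l_2$ tuple in $\mathcal{H}$ with $p+q=n$ and $l_1+l_2=l$. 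I would verify that this correspondence is a bijection on generators and that $\square$ intertwines the differential of $\mathcal{G}\Box\mathcal{H}$ with the Leibniz differential carrying the correct Koszul signs; together these give the claimed isomorphism of bigraded complexes.

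Next, since the chain groups $\mathrm{MC}_{n,l}$ are free abelian, being generated by finite tuples of hyperedges, both factor complexes are complexes of free $\mathbb{Z}$-modules. I would then apply the algebraic K\"{u}nneth theorem over the principal ideal domain $\mathbb{Z}$ to the tensor product $\mathrm{MC}_{*,*}(\mathcal{G})\otimes \mathrm{MC}_{*,*}(\mathcal{H})$, treating the length grading $*$ as an external grading carried along unchanged. This yields, in each total homological degree $n$, the natural short exact sequence with the $\mathrm{Tor}$ summands indexed so that the second factor sits in degree $q-1$; transporting it along the chain isomorphism of the first step produces precisely the asserted sequence. Naturality in $\mathcal{G}$ and $\mathcal{H}$ follows from the naturality of the algebraic K\"{u}nneth sequence combined with the functoriality of simple magnitude homology established earlier.

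The main obstacle is the chain-level tensor decomposition. The two delicate points are: first, confirming that distances in $\mathcal{G}\Box\mathcal{H}$ are genuinely additive for the chosen notion of hyperedge distance, which depends on the precise definition of the Cartesian product of hypergraphs and must be checked rather than assumed; and second, the sign and shuffle bookkeeping required to show that $\square$ is a chain map and is bijective on generators in each bidegree. Once the bigraded chain isomorphism is in hand, the remainder is a formal application of homological algebra.
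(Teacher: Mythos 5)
Your overall strategy---reduce to the algebraic K\"unneth theorem for tensor products of free chain complexes---is the same as the paper's, and your final steps (freeness of the chain groups, the K\"unneth short exact sequence over $\mathbb{Z}$, naturality) are fine. However, your key chain-level claim is false: the exterior product is \emph{not} an isomorphism of bigraded chain complexes, and there is no bijection between generators of $\mathrm{MC}_{n,l}(\mathcal{G}\Box\mathcal{H})$ and shuffles of pairs of generators. The obstruction is the existence of ``diagonal'' generators in the product, i.e.\ tuples in which both coordinates change in the same step: since $d_{\mathcal{G}\Box\mathcal{H}}=d_{\mathcal{G}}+d_{\mathcal{H}}$ (Lemma \ref{lemma:product_equal}) is an $\ell^{1}$-type metric, a step from $v_{0}\times w_{0}$ to $v_{1}\times w_{1}$ with $v_{0}\neq v_{1}$ and $w_{0}\neq w_{1}$ is a legitimate step of length at least $2$, and such a tuple is not a shuffle of a tuple in $\mathcal{G}$ with a tuple in $\mathcal{H}$. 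Concretely, take $\mathcal{G}=\{\{a\},\{b\},\{a,b\}\}$ and $\mathcal{H}=\{\{x\},\{y\},\{x,y\}\}$, so that $\mathcal{G}\Box\mathcal{H}$ is a $4$-cycle. In bidegree $(n,l)=(1,2)$ the group $\mathrm{MC}_{1,2}(\mathcal{G}\Box\mathcal{H})$ is free of rank $4$, generated by the ordered antipodal pairs such as $(a\times x,\, b\times y)$, whereas
\[
\bigoplus_{\substack{p+q=1\\ l_{1}+l_{2}=2}}\mathrm{MC}_{p,l_{1}}(\mathcal{G})\otimes\mathrm{MC}_{q,l_{2}}(\mathcal{H})=0,
\]
because $\mathrm{MC}_{0,l_{1}}$ is concentrated in $l_{1}=0$ and $\mathrm{MC}_{1,2}$ of a single edge vanishes. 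So the two bigraded complexes have different ranks and cannot be isomorphic. (Note also that $\square$ sends a tensor generator to a signed \emph{sum} over shuffles, so it is not even a candidate for a generator-to-generator bijection.)

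What is true---and is exactly what the paper proves---is that $\square$ is a quasi-isomorphism (indeed a chain homotopy equivalence), which is all that is needed. The paper obtains this by working one level up, with simplicial sets: the coordinatewise product of tuples of the \emph{same} simplicial degree gives an isomorphism of pointed simplicial sets $\bigvee_{p+q=l}M_{p}(\mathcal{G})\wedge M_{q}(\mathcal{H})\cong M_{l}(\mathcal{G}\Box\mathcal{H})$ (Proposition \ref{proposition:simplicial}; this is where your diagonal tuples live, as products of two nondegenerate simplices), the reduced normalized chains of $M_{l}$ recover $\mathrm{MC}_{\ast,l}$ (Lemma \ref{lemma:normalized}), and the reduced Eilenberg--Zilber map $\bar{N}_{\ast}(X)\otimes\bar{N}_{\ast}(Y)\to\bar{N}_{\ast}(X\wedge Y)$ is a chain homotopy equivalence but not an isomorphism. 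If you replace your claimed isomorphism by this quasi-isomorphism (proved via Eilenberg--Zilber, acyclic models, or an explicit chain homotopy), the rest of your argument goes through unchanged: apply Theorem \ref{thm3} to the tensor product of free complexes and transport the resulting short exact sequence along the quasi-isomorphism.
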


The paper is structured as follows: In the next section, we introduce the concept of distance  of the simplices or hyperedges on complexes and hypergraphs. Using this definition, we construct the magnitude and magnitude homology of a hypergraph, which leads us to our first main result. Section \ref{section:functorial} explores the functorial properties of magnitude homology for hypergraphs. Finally, in Section \ref{section:kunneth}, we present the proof of the K\"{u}nneth theorem for the simple magnitude homology of hypergraphs.

\section{Magnitude homology of hypergraphs}
\subsection{The distance on complexes and hypergraphs}
Recall that the distance of two vertices $x,y$ of a graph $G$ is defined by the length of the shortest edge path from $x$ to $y$.

\begin{definition}
Let $K$ be a simplicial complex, and let $\sigma$, $\tau$ be two simplices in $K$. A \emph{path from $\sigma$ to $\tau$} is a sequence $\sigma_{0}\sigma_{1}\sigma_{2}\cdots\sigma_{k}$ of simplices in $K$ with $\sigma=\sigma_{0},\tau=\sigma_{k}$ such that every two connecting simplices have a nonempty intersection, i.e., $\sigma_{i}\cap \sigma_{i+1}\neq \emptyset$ for $i=0,\dots,k-1$.
\end{definition}

Now, we will introduce the length of a path on a simplicial complex\footnote{The distance defined in our paper follows from the idea in \cite{aksoy2020hypernetwork}, which is a crucial tool to help us generalize graph-based network science techniques to hypergraphs.}.
\begin{definition}\label{definition:length}
Let $K$ be a simplicial complex. Let $\gamma=\sigma_{0}\sigma_{1}\sigma_{2}\cdots\sigma_{k}$ be a path on $K$. The \emph{length of $\gamma$} is defined by $\ell(\gamma)=\sum\limits_{i=0}^{k-1} \ell(\sigma_i, \sigma_{i+1})$. Here, for $\sigma\cap \tau\neq \emptyset$,
\begin{equation*}
  \ell(\sigma, \tau)=\begin{cases}
0,&  \sigma=\tau;\\
\frac{1}{2},& \sigma\subsetneq \tau\ or\ \tau \subsetneq \sigma;\\
1, & {\rm otherwise}.
\end{cases}
\end{equation*}
We denote $h(\gamma)=k$, called the \emph{height of $\gamma$}.
\end{definition}

\begin{example}\label{ex:2.1}
Given a simplicial complex $\Delta[2]$=$\{\{0\}, \{1\}, \{2\}, \{0,1\}, \{0,2\}$, $\{1,2\}, \{0,1,2\}\}$, we calculate the length and height of the paths from $\{0\}$ to $\{2\}$ on it. Let us take the example of two paths from $\{0\}$ to $\{2\}$, $\gamma_1=\{0\}\{0,1\}\{1,2\}\{2\}$ and $\gamma_2=\{0\}\{0,1\}\{1\}\{1,2\}\{2\}$. By Definition \ref{definition:length}, we can obtain $\ell(\gamma_1)=\frac{1}{2}+1+\frac{1}{2}=2$, $\ell(\gamma_2)=\frac{1}{2}+\frac{1}{2}+\frac{1}{2}+\frac{1}{2}=2$, $h(\gamma_1)=3$ and $h(\gamma_2)=4$.
\end{example}

\begin{definition}\label{definintion: distance}
Let $\sigma,\tau$ be simplices of a simplicial complex $K$.
The \emph{intercrossing distance} between $\sigma$ and $\tau$ is defined by $d(\sigma,\tau)=\inf\limits_{\gamma}\ell(\gamma)$, where $\gamma$ runs across all the paths from $\sigma$ to $\tau$.
The \emph{external distance} between $\sigma$ and $\tau$ is defined by $\delta(\sigma,\tau)=\inf\limits_{\gamma}h(\gamma)$.
If there is no path from $\sigma$ to $\tau$, we denote $d(\sigma,\tau)=\delta(\sigma,\tau)=\infty$.
\end{definition}

In particular, if $K$ is a graph, the distance of simplices coincides with the distance of vertices. More precisely, we have
\begin{equation*}
  d(\sigma,\tau)=\delta(\sigma,\tau)-1=d_{G}(\sigma,\tau)
\end{equation*}
for 0-simplices $\sigma,\tau$ of $K$. Here, $d_{G}$ denotes the distance of vertices on a graph \cite{entringer1976distance}.

Recall that an \emph{extended metric} on a space $X$ is a function $d:X\times X\to \mathbb{R}\cup \{+\infty\}$ satisfying the positive definiteness, symmetry, and triangle inequality.
Let $K$ be a simplicial complex and let $X(K)$ be the set of simplices of $K$. Then the intercrossing distance and the external distance $d,\delta:X(K)\times X(K)\to \mathbb{R}\cup \{+\infty\}$ are extended metrics.

\begin{lemma}\label{lemma:path1}
Let $\sigma,\tau$ be simplices of a simplicial complex $K$. Then there exists a path $\gamma$ from $\sigma$ to $\tau$ such that $\ell(\gamma)=d(\sigma,\tau)$ and $h(\gamma)=\delta(\sigma,\tau)$.
\end{lemma}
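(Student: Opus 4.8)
The plan is to produce a single path attaining both infima, by first checking the infima are attained at all and then manufacturing a common minimizer through height-reducing surgeries that never increase length. First I would dispose of the disconnected case: if there is no path from $\sigma$ to $\tau$ then $d(\sigma,\tau)=\delta(\sigma,\tau)=\infty$ and there is nothing to prove, so I may assume $\sigma$ and $\tau$ lie in the same component. Since every edge-length $\ell(\sigma_i,\sigma_{i+1})$ lies in $\{0,\frac12,1\}$, the length of any path lies in $\frac12\mathbb{Z}_{\ge 0}$ and its height in $\mathbb{Z}_{\ge 0}$; both value sets are discrete and bounded below, so the infima $d(\sigma,\tau)=\inf_\gamma\ell(\gamma)$ and $\delta(\sigma,\tau)=\inf_\gamma h(\gamma)$ are each attained, although a priori by different paths.

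Next I would introduce two local moves on a path $\sigma_0\sigma_1\cdots\sigma_k$, each strictly decreasing the height by $1$: deleting a repeated simplex when $\sigma_i=\sigma_{i+1}$ (which removes an edge of length $0$), and merging $\sigma_{i-1}\sigma_i\sigma_{i+1}\mapsto\sigma_{i-1}\sigma_{i+1}$ whenever $\sigma_{i-1}\cap\sigma_{i+1}\neq\emptyset$. The crucial point is that neither move increases the length: distinct intersecting simplices have edge-length at least $\frac12$, so the single merged edge (of length at most $1$) is never longer than the two edges of total length at least $1$ that it replaces. Applying these moves repeatedly to a length-minimizing path keeps the length pinned at $d(\sigma,\tau)$, since it cannot drop below the minimum, while forcing the height strictly down until no move applies. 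This yields a reduced length-minimizing path, one with no repeated simplices and with $\sigma_{i-1}\cap\sigma_{i+1}=\emptyset$ at every interior index. A short case check on the face-relations at $\sigma_i$ then gives a normal form: the $\frac12$-edges can occur only in isolated \emph{peaks} $\sigma_{i-1}\subsetneq\sigma_i\supsetneq\sigma_{i+1}$ with disjoint base, separated by edges of length $1$.

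The main obstacle is the final step, showing that this reduced length-minimizing path $\gamma$ actually realizes the minimal height $\delta(\sigma,\tau)$ rather than merely some larger value; the difficulty is that minimizing length and minimizing height are a priori competing objectives, so local reducedness alone does not force global height-minimality. I would attack this by applying the same surgeries to a height-minimizing path, reducing it to a reduced path of height $\delta(\sigma,\tau)$ without raising its height, and then running an exchange/induction argument comparing the two reduced paths simplex by simplex, using that every subpath of a geodesic is again a geodesic. The key structural input is that a peak is forced exactly when its base is a disjoint pair, and a disjoint pair can never be bridged in fewer than two hops, since a single edge requires a nonempty intersection. This shows the peaks of $\gamma$ cannot be eliminated by any competitor, so $h(\gamma)\le\delta(\sigma,\tau)$; combined with the trivial reverse inequality $h(\gamma)\ge\delta(\sigma,\tau)$ this gives $h(\gamma)=\delta(\sigma,\tau)$, and $\gamma$ is the desired path with $\ell(\gamma)=d(\sigma,\tau)$ and $h(\gamma)=\delta(\sigma,\tau)$.
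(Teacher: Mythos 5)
Your first half is sound and matches the paper's own argument: the infima are attained, and your deletion/merge surgeries (justified by the triangle inequality $\ell(\sigma_{i-1},\sigma_{i+1})\le\ell(\sigma_{i-1},\sigma_i)+\ell(\sigma_i,\sigma_{i+1})$) turn a length-minimizer into a reduced one of the same length. A small inaccuracy along the way: in a reduced path an interior simplex can neither contain nor be contained in its neighbours, so $\tfrac12$-edges can occur only as the \emph{first or last} edge, not as ``isolated peaks separated by $1$-edges'' scattered through the path. The fatal problem is the final step. The claim you set out to prove there --- that a reduced length-minimizing path $\gamma$ automatically satisfies $h(\gamma)\le\delta(\sigma,\tau)$ --- is false, so no exchange/induction argument can establish it. Take $K$ to be the simplicial closure of $\{1,2,3\}$, $\{2,3,5\}$, $\{3,5,6\}$, with $\sigma=\{1,2\}$, $\tau=\{5,6\}$. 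Then $\gamma=\{1,2\}\,\{1,2,3\}\,\{3,5,6\}\,\{5,6\}$ is reduced (consecutive-but-one terms are disjoint) and length-minimizing, with $\ell(\gamma)=\tfrac12+1+\tfrac12=2=d(\sigma,\tau)$, yet $h(\gamma)=3$, while $\delta(\sigma,\tau)=2$ is realized by $\{1,2\}\,\{2,3,5\}\,\{5,6\}$. Note that this $\gamma$ has no peaks at all, so peak-rigidity cannot be the mechanism controlling its height.

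The paper resolves exactly this difficulty by a dichotomy rather than by insisting that the length-minimizer wins. Write $n=h(\gamma)$ for your reduced length-minimizer; since all its interior edges have length $1$ and its end edges have length $\tfrac12$ or $1$, one gets $n-1\le d(\sigma,\tau)\le n$. Separately take a height-minimizer $\tilde\gamma$ with $h(\tilde\gamma)=\delta(\sigma,\tau)$; it is automatically reduced (otherwise a merge would contradict minimal height), so the same edge-length count gives $\ell(\tilde\gamma)\le\delta(\sigma,\tau)$. If $n=\delta(\sigma,\tau)$, then $\gamma$ itself is the desired path. Otherwise $\delta(\sigma,\tau)\le n-1$, and the chain $\delta(\sigma,\tau)\le n-1\le d(\sigma,\tau)\le\ell(\tilde\gamma)\le\delta(\sigma,\tau)$ collapses to equalities, so $\tilde\gamma$ (not $\gamma$) is the desired path. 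In the counterexample above you are in this second case: the witness is the height-minimizer, whose length turns out to equal $d(\sigma,\tau)$. Your proposal is missing precisely this case split; to repair it, replace the exchange argument by the comparison with $\tilde\gamma$ and this numerical squeeze.
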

\begin{proof}
Suppose $\gamma=\sigma_{0}\sigma_{1}\cdots\sigma_{n}$ is a path from $\sigma=\sigma_{0}$ to $\tau=\sigma_{n}$ of length $\ell(\gamma)=d(\sigma,\tau)$. If $\sigma_{i-1}\cap \sigma_{i+1}\neq \emptyset$ for some $1\leq i\leq n-1$, then $\gamma'=\sigma_{0}\sigma_{1}\cdots\sigma_{i-1}\sigma_{i+1}\cdots\sigma_{n}$ is also a path from $\sigma$ to $\tau$. Note that $\ell(\sigma_{i-1}, \sigma_{i+1})\leq \ell(\sigma_{i-1}, \sigma_{i})+\ell(\sigma_{i}, \sigma_{i+1})$. We have
\begin{equation*}
  \ell(\gamma')=\ell(\sigma_{i-1}, \sigma_{i+1})+\sum\limits_{j\neq i,i+1}\ell(\sigma_{j}, \sigma_{j+1})\leq \sum\limits_{j=0}^{k-1} \ell(\sigma_{j}, \sigma_{j+1})=\ell(\gamma).
\end{equation*}
Since $\ell(\gamma)=d(\sigma,\tau)$, one has $\ell(\gamma')=\ell(\gamma)$. Then $\gamma'$ is a path from $\sigma$ to $\tau$ of length $\ell(\gamma)=d(\sigma,\tau)$. By induction on the above progress, we can find a path from $\sigma$ to $\tau$ of length $\ell(\gamma)=d(\sigma,\tau)$ such that $\sigma_{i-1}\cap \sigma_{i+1}=\emptyset$ for all $1\leq i\leq n-1$. Moreover, it is impossible that $\sigma_{i}\subseteq \sigma_{i-1}$ or $\sigma_{i}\subseteq \sigma_{i+1}$ for some $1\leq i\leq n-1$. Indeed, if $\sigma_{i}\subseteq \sigma_{i-1}$, we have
\begin{equation*}
  \emptyset\neq\sigma_{i+1}\cap \sigma_{i}\subseteq\sigma_{i+1}\cap \sigma_{i-1},
\end{equation*}
it is a contradiction. Thus we obtain
\begin{equation*}
  \ell(\gamma)=\sum\limits_{j=0}^{n-1}\ell(\sigma_{j}, \sigma_{j+1})=\ell(\sigma_{0}, \sigma_{1})+\ell(\sigma_{n-1}, \sigma_{n})+n-2.
\end{equation*}
It follows that $n-1\leq\ell(\gamma)\leq n$. Thus we have $n-1\leq d(\sigma,\tau)\leq n$.

On the other hand, suppose $\tilde{\gamma}=\tilde{\sigma}_{0}\tilde{\sigma}_{1}\cdots\tilde{\sigma}_{k}$ is a path from $\sigma=\tilde{\sigma}_{0}$ to $\tau=\tilde{\sigma}_{k}$ of height $h(\tilde{\gamma})=k=\delta(\sigma,\tau)$. Then we have $\tilde{\sigma}_{i-1}\cap \tilde{\sigma}_{i+1}=\emptyset$ for all $1\leq i\leq k-1$. Otherwise, the path $\tilde{\gamma}$ can be reduced to a path with a height smaller than $k$, as shown in the previous construction. It follows that
\begin{equation*}
  \ell(\tilde{\gamma})=\sum\limits_{j=0}^{k-1}\ell(\tilde{\sigma}_{j}, \tilde{\sigma}_{j+1})=\ell(\tilde{\sigma}_{0}, \tilde{\sigma}_{1})+\ell(\tilde{\sigma}_{k-1}, \tilde{\sigma}_{k})+k-2.
\end{equation*}
So we have $\ell(\tilde{\gamma})\leq\delta(\sigma,\tau)\leq \ell(\tilde{\gamma})+1$.

If $n=\delta(\sigma,\tau)$, then $\gamma$ is the desired path. If $n\neq\delta(\sigma,\tau)$, then we have
\begin{equation*}
  \delta(\sigma,\tau)\leq n-1\leq d(\sigma,\tau)\leq\ell(\tilde{\gamma})\leq\delta(\sigma,\tau).
\end{equation*}
It follows that $\ell(\tilde{\gamma})=d(\sigma,\tau)$. Then $\tilde{\gamma}$ is the desired path.
\end{proof}
\begin{remark}
The proof of Lemma \ref{lemma:path1} also shows that
\begin{equation*}
  d(\sigma,\tau)\leq\delta(\sigma,\tau)\leq d(\sigma,\tau)+1.
\end{equation*}
\end{remark}

The following example is a particularly convincing illustration of Lemma \ref{lemma:path1}.
\begin{example}\label{ex:2.2}
Based on Example \ref{ex:2.1}, and considering all paths from  $\{0\}$ to $\{2\}$ on $\Delta[2]$, we will see that the two shortest paths are $\gamma_1'=\{0\}\{0,2\}\{2\}$ and $\gamma_2'=\{0\}\{0,1,2\}\{2\}$. By Definition \ref{definintion: distance}, the intercrossing distance is $d(\{0\},\{2\})=\ell(\gamma_1')=\ell(\gamma_2')=\frac{1}{2}+\frac{1}{2}=1$ and the external distance is $\delta(\{0\},\{2\})=h(\gamma_1')=h(\gamma_2')=2$.
\end{example}

\begin{lemma}\label{lemma:path2}
Let $K$ be a simplicial complex. Let $\sigma_{0}\sigma_{1}\cdots\sigma_{n}$ be a path on $K$ such that $\sigma_{i-1}\cap \sigma_{i+1}=\emptyset$ for all $1\leq i\leq n-1$. Then there exists a path $\sigma_{0}\sigma_{1}'\cdots\sigma_{n-1}'\sigma_{n}$ such that $\sigma_{1}',\dots,\sigma_{n-1}'\in K_{1}$.
\end{lemma}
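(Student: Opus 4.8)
The plan is to build the required path by keeping the endpoints $\sigma_0,\sigma_n$ fixed and replacing each interior simplex $\sigma_i$ (for $1\le i\le n-1$) by a $1$-simplex that it contains. The guiding idea is that a path only records which consecutive simplices overlap, so it should suffice to retain at each overlap a single witnessing vertex and then to join the two witnesses lying in $\sigma_i$ by an edge.

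Concretely, I would first choose, for every $0\le i\le n-1$, a vertex $v_i\in\sigma_i\cap\sigma_{i+1}$; this is possible precisely because $\sigma_0\sigma_1\cdots\sigma_n$ is a path, so each consecutive intersection is nonempty. Then for each interior index $1\le i\le n-1$ both $v_{i-1}$ and $v_i$ lie in $\sigma_i$ (since $v_{i-1}\in\sigma_{i-1}\cap\sigma_i$ and $v_i\in\sigma_i\cap\sigma_{i+1}$), and I would set $\sigma_i'=\{v_{i-1},v_i\}$. As $\sigma_i'$ is a subset of the simplex $\sigma_i$, it is a face of $\sigma_i$ and hence belongs to $K$; provided $v_{i-1}\neq v_i$ it is genuinely a $1$-simplex, i.e.\ $\sigma_i'\in K_1$.

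The one point that needs the hypothesis — and the only place where anything could go wrong — is establishing $v_{i-1}\neq v_i$. Here I would invoke the assumption $\sigma_{i-1}\cap\sigma_{i+1}=\emptyset$: since $v_{i-1}\in\sigma_{i-1}$ and $v_i\in\sigma_{i+1}$, an equality $v_{i-1}=v_i$ would place a common vertex in $\sigma_{i-1}\cap\sigma_{i+1}$, contradicting disjointness. Thus each $\sigma_i'$ is indeed a $1$-simplex, and this is the step I expect to be the crux of the argument.

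Finally I would verify that $\sigma_0\sigma_1'\cdots\sigma_{n-1}'\sigma_n$ is a path by checking that each consecutive intersection is nonempty. The interior edges share the chosen witnesses, $v_i\in\sigma_i'\cap\sigma_{i+1}'$ for $1\le i\le n-2$, while at the two ends $v_0\in\sigma_0\cap\sigma_1'$ and $v_{n-1}\in\sigma_{n-1}'\cap\sigma_n$. The degenerate cases $n=1,2$ (no interior simplices to replace, or a single one) are handled directly. Beyond the distinctness argument I anticipate no real obstacle, the remainder being routine bookkeeping of shared vertices.
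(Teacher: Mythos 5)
Your proof is correct, and it takes a more direct route than the paper's. The paper proceeds by induction on $n$: its base case $n=2$ is exactly your local move (pick $a\in\sigma_0\cap\sigma_1$, $b\in\sigma_1\cap\sigma_2$, note $a\neq b$ by disjointness, and take the edge $\{a,b\}\subseteq\sigma_1$), and the inductive step splices a new path $\sigma_0\sigma_1'\cdots\sigma_{m-2}'\sigma_{m-1}$ with the tail $\sigma_{m-2}'\sigma_{m-1}\sigma_m$. You instead perform the construction globally in one pass: choose a witness vertex $v_i\in\sigma_i\cap\sigma_{i+1}$ at every junction and set $\sigma_i'=\{v_{i-1},v_i\}$, with the hypothesis $\sigma_{i-1}\cap\sigma_{i+1}=\emptyset$ guaranteeing $v_{i-1}\neq v_i$ at each interior index. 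Besides being shorter, your version quietly repairs a gap in the paper's induction: the paper's step uses the containment $\sigma_{m-2}'\cap\sigma_m\subseteq\sigma_{m-2}\cap\sigma_m$, which requires $\sigma_{m-2}'\subseteq\sigma_{m-2}$ --- a property of the construction that is \emph{not} part of the lemma's statement, so the inductive hypothesis as literally stated does not supply it (one would have to strengthen the induction to ``there is such a path with $\sigma_i'\subseteq\sigma_i$ for all $i$''). In your argument this inclusion is explicit by construction, so no strengthening is needed; the trade-off is only that you must check the three kinds of consecutive intersections ($\sigma_0\cap\sigma_1'$, $\sigma_i'\cap\sigma_{i+1}'$, $\sigma_{n-1}'\cap\sigma_n$) by hand, which you do.
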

\begin{proof}
We will complete the proof by the induction. When $n=1$, it is trivial. Considering the case $n=2$, then we have a path $\sigma_{0}\sigma_{1}\sigma_{2}$ from $\sigma_{0}$ to $\sigma_{2}$ such that $\sigma_{0}\cap\sigma_{2}=\emptyset$. By definition, we have $\sigma_{0}\cap \sigma_{1}\neq \emptyset$ and $\sigma_{1}\cap \sigma_{2}\neq \emptyset$. Then, there exist vertices $a\in\sigma\cap \sigma_{1}$ and $b\in\sigma_{1}\cap \sigma_2$. Note that $a\neq b$ since $\sigma_{0}\cap\sigma_{2}=\emptyset$.
So we have $\{a,b\}\subseteq \sigma_{1}$, the vertices $a$ and $b$ span a $1$-simplex $\sigma_{1}'$ such that $\sigma_{0}\cap \sigma_{1}'\neq \emptyset$ and $\sigma_{1}'\cap \sigma_{2}\neq \emptyset$. Since $\sigma_{1}$ is a simplex, we have $\sigma_{1}'\subseteq \sigma_{1}\in K_{\geq 1}$. It follows that $\sigma_{0}\sigma_{1}'\sigma_{2}$ is a path from $\sigma_{0}$ to $\sigma_{2}$ for $\sigma_{1}'\in K_{1}$.

Assume that the lemma is true for $n\leq m-1$. When $n=m$. Suppose that $\sigma_{0}\sigma_{1}\cdots\sigma_{m}$ is a path from $\sigma_{0}$ to $\sigma_{m}$ such that $\sigma_{i-1}\cap \sigma_{i+1}=\emptyset$ for all $1\leq i\leq m-1$.
Then $\sigma_{0}\sigma_{1}\cdots\sigma_{m-1}$ is a path from $\sigma_{0}$ to $\sigma_{m-1}$ such that $\sigma_{i-1}\cap \sigma_{i+1}=\emptyset$ for all $1\leq i\leq m-1$. By induction, there is a path $\sigma_{0}\sigma_{1}'\cdots\sigma_{m-2}'\sigma_{m-1}$ such that $\sigma_{1}',\dots,\sigma_{m-2}'\in K_{1}$. There is also a path $\sigma_{m-2}'\sigma_{m-1}\sigma_{m}$ such that $\sigma_{m-2}'\cap\sigma_{m}\subseteq \sigma_{m-2}\cap\sigma_{m}=\emptyset$. So there exists a 1-simplex $\sigma_{m-1}'$ such that $\sigma_{m-2}'\sigma_{m-1}'\sigma_{m}$ is a path on $K$. Thus we have a path $\sigma_{0}\sigma_{1}'\cdots\sigma_{m-1}'\sigma_{m}$ from $\sigma_{0}$ to $\sigma_{m}$ such that $\sigma_{1}',\dots,\sigma_{m-1}'\in K_1$. The proof is completed.
\end{proof}

\begin{proposition}\label{proposition:path}
Let $\sigma,\tau$ be simplices of a simplicial complex $K$.
Then there exists a path $\sigma\sigma_{1}\sigma_{2}\cdots\sigma_{n-1}\tau$ of length $l$ and height $\delta(\sigma,\tau)$ with $\sigma_{1},\dots,\sigma_{n-1}\in K_{1}$.
Here,
\begin{equation*}
  \ell=\left\{
                     \begin{array}{ll}
                       \delta(\sigma,\tau)-1=d(\sigma,\tau), & \hbox{$\sigma,\tau\in K_{0}$}; \\
                       \delta(\sigma,\tau), & \hbox{$\sigma,\tau\in K_{\geq1}$}; \\
                       \delta(\sigma,\tau)-\frac{1}{2}, & {\rm otherwise.}
                     \end{array}
                   \right.
\end{equation*}
\end{proposition}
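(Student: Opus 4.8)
The plan is to start from the height-optimal path supplied by Lemma \ref{lemma:path1}, straighten its interior into edges using Lemma \ref{lemma:path2}, and then read off the length by a short case analysis at the two endpoints.

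First I would invoke Lemma \ref{lemma:path1} to obtain a path $\gamma=\sigma_{0}\sigma_{1}\cdots\sigma_{n}$ with $\sigma_{0}=\sigma$, $\sigma_{n}=\tau$, $h(\gamma)=n=\delta(\sigma,\tau)$ and $\ell(\gamma)=d(\sigma,\tau)$. As the proof of that lemma shows, a path realizing the minimal height necessarily satisfies $\sigma_{i-1}\cap\sigma_{i+1}=\emptyset$ for all $1\leq i\leq n-1$, since otherwise it could be shortened below height $\delta(\sigma,\tau)$. This is precisely the hypothesis required to apply Lemma \ref{lemma:path2}, which replaces the interior simplices by $1$-simplices and produces a path $\sigma\sigma_{1}'\cdots\sigma_{n-1}'\tau$ with $\sigma_{1}',\dots,\sigma_{n-1}'\in K_{1}$ and the same height $n=\delta(\sigma,\tau)$. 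This already settles both the height claim and the requirement that the interior consist of edges, so it remains only to compute the length.

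For the length I would argue step by step. For an interior step $\sigma_{i}'\sigma_{i+1}'$ with $1\leq i\leq n-2$, both simplices are edges and they must be distinct: if $\sigma_{i}'=\sigma_{i+1}'$ one could delete $\sigma_{i+1}'$ and obtain a path of height $n-1<\delta(\sigma,\tau)$, a contradiction. Two distinct intersecting edges satisfy neither containment, so $\ell(\sigma_{i}',\sigma_{i+1}')=1$ by Definition \ref{definition:length}, contributing $n-2$ in total. The two boundary steps are where the endpoint dimensions enter, and here I would exploit the explicit construction in Lemma \ref{lemma:path2}: the edge $\sigma_{1}'$ adjacent to $\sigma$ is spanned by a vertex of $\sigma$ together with a vertex of the succeeding simplex, which is disjoint from $\sigma$, so $\sigma_{1}'$ contains a vertex lying outside $\sigma$; symmetrically $\sigma_{n-1}'$ contains a vertex outside $\tau$. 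Consequently $\sigma_{1}'\not\subseteq\sigma$ and, when $\dim\sigma\geq 1$, also $\sigma\not\subseteq\sigma_{1}'$, whence $\ell(\sigma,\sigma_{1}')=1$ for $\sigma\in K_{\geq 1}$ and $\ell(\sigma,\sigma_{1}')=\frac{1}{2}$ for $\sigma\in K_{0}$ (a single vertex sits properly inside the edge); the same analysis applies to $\ell(\sigma_{n-1}',\tau)$.

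Summing the contributions then gives $\ell=(n-2)+\frac{1}{2}+\frac{1}{2}=\delta(\sigma,\tau)-1$ when both endpoints are vertices, $\ell=(n-2)+1+1=\delta(\sigma,\tau)$ when both have dimension at least $1$, and $\ell=(n-2)+\frac{1}{2}+1=\delta(\sigma,\tau)-\frac{1}{2}$ in the mixed case, matching the three branches of the asserted formula; the equality $\delta(\sigma,\tau)-1=d(\sigma,\tau)$ for vertices is the identity already recorded for $0$-simplices after Definition \ref{definintion: distance}. The step I expect to be the main obstacle is this boundary analysis: one must carefully track, through the inductive construction of Lemma \ref{lemma:path2}, that the first and last edges genuinely protrude from the endpoint simplices, and one must dispose of the low-height degenerate cases such as $\delta(\sigma,\tau)\leq 1$ separately, since there the interior is empty and the length must be verified directly rather than through the summation above.
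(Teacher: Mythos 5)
Your proposal is correct and follows essentially the same route as the paper's own proof: invoke Lemma \ref{lemma:path1} to get a path realizing both $d(\sigma,\tau)$ and $\delta(\sigma,\tau)$, observe that height-minimality forces $\sigma_{i-1}\cap\sigma_{i+1}=\emptyset$, straighten the interior into $1$-simplices via Lemma \ref{lemma:path2}, and finish with the same three-way endpoint analysis (including the dimension argument ruling out $\sigma\subseteq\sigma_{1}'$ when $\sigma\in K_{\geq 1}$). The low-height degenerate cases you flag are likewise left implicit in the paper's proof (which silently assumes $n\geq 2$), so your treatment is, if anything, slightly more careful on that point; just note that the identity $d=\delta-1$ for vertices should be sourced from the remark following Lemma \ref{lemma:path1} rather than the graph-specific remark after Definition \ref{definintion: distance}.
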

\begin{proof}
By Lemma \ref{lemma:path1}, we have a path $\gamma=\sigma_{0}\sigma_{1}\cdots\sigma_{n}$ is a path from $\sigma=\sigma_{0}$ to $\tau=\sigma_{n}$ of length $\ell(\gamma)=d(\sigma,\tau)$ and height $h(\gamma)=n=\delta(\sigma,\tau)$.
If $\sigma_{i-1}\cap \sigma_{i+1}\neq \emptyset$ for some $1\leq i\leq n-1$, then $\gamma'=\sigma_{0}\sigma_{1}\cdots\sigma_{i-1}\sigma_{i+1}\cdots\sigma_{n}$ is also a path from $\sigma$ to $\tau$.
So we have $h(\gamma')=n-1<n=\delta(\sigma,\tau)$, contradiction. Thus we have $\sigma_{i-1}\cap \sigma_{i+1}= \emptyset$ for all $1\leq i\leq n-1$. It follows that
\begin{equation*}
  \ell(\gamma)=\ell(\sigma_{0}, \sigma_{1})+\ell(\sigma_{n-1}, \sigma_{n})+n-2.
\end{equation*}
By Lemma \ref{lemma:path2}, we have a path $\gamma_{1}=\sigma_{0}\sigma_{1}'\cdots\sigma_{n-1}'\sigma_{n}$ from $\sigma=\sigma_{0}$ to $\tau=\sigma_{n}$ such that $\sigma_{1}',\dots,\sigma_{n-1}'\in K_{1}$. Note that
\begin{equation*}
  \ell(\gamma_{1})=\ell(\sigma_{0}, \sigma_{1}')+\ell(\sigma_{n-1}', \sigma_{n})+n-2.
\end{equation*}

When $\sigma,\tau\in K_{0}$. Then we have $\ell(\sigma_{0}, \sigma_{1}')=\ell(\sigma_{n-1}', \sigma_{n})=\ell(\sigma_{0}, \sigma_{1})=\ell(\sigma_{n-1}, \sigma_{n})=1/2$. It follows that
\begin{equation*}
  d(\sigma,\tau)=\ell(\gamma)=\ell(\gamma_{1})=1+n-2=n-1.
\end{equation*}

When $\sigma,\tau\in K_{\geq 1}$. Recall that $\sigma_{1}'\not\subseteq \sigma_{0}=\sigma$ and $\sigma_{n-1}'\not\subseteq \sigma_{n}=\tau$. Since $\sigma_{1}'\in K_{1}$, it is impossible $\sigma\subseteq \sigma_{1}'$. Indeed, if $\sigma\subseteq \sigma_{1}'$, then $\sigma=\sigma_{1}'$ by the dimension reason. Then $\sigma\cap \sigma_{2}'=\sigma_{1}'\cap \sigma_{2}'\neq \emptyset$, which leads to a contradiction. So we have
\begin{equation*}
  \ell(\sigma, \sigma_{1}')=1.
\end{equation*}
Similarly, we obtain $\ell(\sigma_{n-1}', \tau)=1$. It follows that
\begin{equation*}
   \ell(\gamma_{1})=2+n-2=n.
\end{equation*}

When $\sigma\in K_{\geq 1}, \tau\in K_{0}$ or $\sigma\in K_{0}, \tau\in K_{\geq 1}$. We only consider the case $\sigma\in K_{\geq 1}, \tau\in K_{0}$ as the other case is similar. Since $\sigma\in K_{\geq 1}, \tau\in K_{0}$, we have $\ell(\sigma, \sigma_{1}')=1$ and $\ell(\sigma_{n-1}, \tau)=\ell(\sigma_{n-1}', \tau)=1/2$. It follows that
\begin{equation*}
  \ell(\gamma_{1})=1+\frac{1}{2}+n-2=n-\frac{1}{2}.
\end{equation*}
The desired result follows.
\end{proof}

Let $K$ be a simplicial complex. Then the $1$-skeleton $\mathrm{sk}_{1}(K)$ of $K$ is a graph. In view of Proposition \ref{proposition:path}, for vertices $x,y\in K$, we have
\begin{equation*}
  d_{\mathrm{sk}_{1}(K)}(x,y)=d_{K}(x,y).
\end{equation*}

Let $\mathcal{H}$ be a hypergraph, and let $\sigma,\tau$ be hyperedges in $\mathcal{H}$. A \emph{path from $\sigma$ to $\tau$ on $\mathcal{H}$} is a sequence of hyperedges $\sigma\sigma_{1}\sigma_{2}\cdots\sigma_{k}\tau$ of $\mathcal{H}$ such that every two connecting hyperedges have a nonempty intersection. The length and height of a path of hyperedges are defined in a similar way as Definition \ref{definition:length}.
\begin{definition}
The \emph{intercrossing distance} of the hyperedges $\sigma,\tau\in \mathcal{H}$ is defined by $d(\sigma,\tau)=\inf\limits_{\gamma}\ell(\gamma)$, where $\gamma$ runs across all the paths from $\sigma$ to $\tau$ on the hypergraph $\mathcal{H}$. The \emph{external distance} between $\sigma$ and $\tau$ is defined by $\delta(\sigma,\tau)=\inf\limits_{\gamma}h(\gamma)$.
We denote $d(\sigma,\tau)=\delta(\sigma,\tau)=\infty$ if there is no path from $\sigma$ to $\tau$.
\end{definition}

Let $\Delta \mathcal{H}$ be the \emph{simplicial closure} of $\mathcal{H}$, i.e., $\Delta \mathcal{H}=\{\tau\neq\emptyset|\tau\subseteq \sigma\text{ for some }\sigma\in\mathcal{H}\}$.
\begin{proposition}\label{proposition:path2}
Let $\mathcal{H}$ be a hypergraph. Then for hyperedges $\sigma,\tau\in \mathcal{H}$, we have
\begin{equation*}
  d_{\mathcal{H}}(\sigma,\tau)=d_{\Delta \mathcal{H}}(\sigma,\tau),\quad \delta_{\mathcal{H}}(\sigma,\tau)=\delta_{\Delta \mathcal{H}}(\sigma,\tau).
\end{equation*}
Moreover, there exists a path $\gamma$ on $\mathcal{H}$ from $\sigma$ to $\tau$ such that $\ell(\gamma)=d_{\mathcal{H}}(\sigma,\tau)$ and $h(\gamma)=\delta_{\mathcal{H}}(\sigma,\tau)$.
\end{proposition}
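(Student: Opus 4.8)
The plan is to establish both distance equalities and the ``moreover'' clause simultaneously, by comparing paths on $\mathcal{H}$ with paths on $\Delta\mathcal{H}$. The key point is that the length contribution $\ell(\sigma',\tau')$ depends only on $\sigma'$ and $\tau'$ as sets (equal, one strictly contained in the other, or incomparable) and not on the ambient complex; likewise the nonempty-intersection condition is purely set-theoretic. Since every hyperedge of $\mathcal{H}$ is a simplex of $\Delta\mathcal{H}$, every path on $\mathcal{H}$ is verbatim a path on $\Delta\mathcal{H}$ with the same length and height. Taking infima over this smaller family of paths immediately gives $d_{\mathcal{H}}(\sigma,\tau)\ge d_{\Delta\mathcal{H}}(\sigma,\tau)$ and $\delta_{\mathcal{H}}(\sigma,\tau)\ge\delta_{\Delta\mathcal{H}}(\sigma,\tau)$, and shows that a path exists on one side exactly when it exists on the other (handling the value $\infty$). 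It remains to prove the reverse inequalities, which I would do by pushing an optimal path on $\Delta\mathcal{H}$ down into $\mathcal{H}$.

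By Lemma \ref{lemma:path1} applied to $\Delta\mathcal{H}$, there is a path $\gamma=\sigma_0\sigma_1\cdots\sigma_n$ with $\sigma_0=\sigma$, $\sigma_n=\tau$ realizing $\ell(\gamma)=d_{\Delta\mathcal{H}}(\sigma,\tau)$ and $h(\gamma)=n=\delta_{\Delta\mathcal{H}}(\sigma,\tau)$ at once. Because $n$ is the minimal height, $\sigma_{i-1}\cap\sigma_{i+1}=\emptyset$ for all $1\le i\le n-1$ (a nonempty intersection would permit the height-reducing shortcut used in the proof of Proposition \ref{proposition:path}), and hence, exactly as in Lemma \ref{lemma:path1}, no interior simplex satisfies $\sigma_i\subseteq\sigma_{i-1}$ or $\sigma_i\subseteq\sigma_{i+1}$. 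For each interior index I now pick a hyperedge $\hat\sigma_i\in\mathcal{H}$ with $\sigma_i\subseteq\hat\sigma_i$ (possible by the definition of $\Delta\mathcal{H}$), and set $\hat\sigma_0=\sigma$, $\hat\sigma_n=\tau$. Enlarging simplices cannot destroy intersections, so $\hat\gamma=\sigma\hat\sigma_1\cdots\hat\sigma_{n-1}\tau$ is a path on $\mathcal{H}$ of height at most $n$.

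The delicate step, and the one I expect to be the crux, is to check that this enlargement does not increase the length, i.e. $\ell(\hat\gamma)\le\ell(\gamma)$; the danger is that enlarging two simplices can turn a strict containment (contributing $\frac{1}{2}$) into an incomparable pair (contributing $1$). I would verify $\ell(\hat\sigma_i,\hat\sigma_{i+1})\le\ell(\sigma_i,\sigma_{i+1})$ term by term. For an interior pair $1\le i\le n-2$ the structural facts above make $\sigma_i$ and $\sigma_{i+1}$ incomparable and distinct, so $\ell(\sigma_i,\sigma_{i+1})=1$, which cannot be exceeded. For the two boundary pairs the same facts exclude precisely the harmful configuration: since $\sigma_1\not\subseteq\sigma$ and $\sigma_{n-1}\not\subseteq\tau$, any boundary contribution equal to $\frac{1}{2}$ must come from the endpoint being strictly contained in the interior simplex, say $\sigma\subsetneq\sigma_1$, and this inclusion persists after enlargement ($\sigma\subsetneq\sigma_1\subseteq\hat\sigma_1$), so the contribution remains $\frac{1}{2}$; in every other case the original contribution is already $1$. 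Summing yields $\ell(\hat\gamma)\le\ell(\gamma)=d_{\Delta\mathcal{H}}(\sigma,\tau)$.

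Combining the two directions then finishes the argument: $\hat\gamma$ being an $\mathcal{H}$-path gives $d_{\mathcal{H}}(\sigma,\tau)\le\ell(\hat\gamma)\le d_{\Delta\mathcal{H}}(\sigma,\tau)$ and $\delta_{\mathcal{H}}(\sigma,\tau)\le h(\hat\gamma)\le\delta_{\Delta\mathcal{H}}(\sigma,\tau)$, which with the easy inequalities force $d_{\mathcal{H}}=d_{\Delta\mathcal{H}}$ and $\delta_{\mathcal{H}}=\delta_{\Delta\mathcal{H}}$. For the ``moreover'' clause, these equalities sandwich the data of $\hat\gamma$: from $d_{\mathcal{H}}(\sigma,\tau)\le\ell(\hat\gamma)\le d_{\Delta\mathcal{H}}(\sigma,\tau)=d_{\mathcal{H}}(\sigma,\tau)$ we get $\ell(\hat\gamma)=d_{\mathcal{H}}(\sigma,\tau)$, and likewise $h(\hat\gamma)=\delta_{\mathcal{H}}(\sigma,\tau)$, so the single constructed path $\hat\gamma$ realizes both optima on $\mathcal{H}$. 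The whole difficulty is concentrated in the third step, where height-minimality of the chosen $\Delta\mathcal{H}$-path — which forbids interior simplices from sitting inside the endpoints — is exactly what keeps the length from growing under enlargement.
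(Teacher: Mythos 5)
Your proof is correct and follows essentially the same route as the paper's: the easy inequality from the inclusion of path families, then Lemma \ref{lemma:path1} to pick a $\Delta\mathcal{H}$-path realizing both optima, enlargement of the interior simplices to hyperedges of $\mathcal{H}$, and the same containment-persistence argument (using $\sigma_0\cap\sigma_2=\emptyset$ to force $\sigma\subsetneq\sigma_1\subseteq\hat\sigma_1$) to show the boundary contributions of $\tfrac{1}{2}$ survive enlargement. If anything, your term-by-term verification that interior pairs are incomparable (hence contribute exactly $1$) is spelled out more explicitly than in the paper, which delegates this to the formula $\ell(\gamma)=\ell(\sigma_0,\sigma_1)+\ell(\sigma_{n-1},\sigma_n)+n-2$ borrowed from Proposition \ref{proposition:path}.
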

\begin{proof}
$(i)$ It is obviously that $d_{\Delta \mathcal{H}}(\sigma,\tau)\leq d_{\mathcal{H}}(\sigma,\tau)$ and $\delta_{\Delta \mathcal{H}}(\sigma,\tau)\leq \delta_{\mathcal{H}}(\sigma,\tau)$.

$(ii)$ We will prove $d_{\mathcal{H}}(\sigma,\tau)\leq d_{\Delta \mathcal{H}}(\sigma,\tau)$ and $\delta_{\mathcal{H}}(\sigma,\tau)\leq \delta_{\Delta \mathcal{H}}(\sigma,\tau)$. By Lemma \ref{lemma:path1}, we have a path $\gamma=\sigma_{0}\sigma_{1}\cdots\sigma_{n}$ on $\Delta \mathcal{H}$ from $\sigma=\sigma_{0}$ to $\tau=\sigma_{n}$ of length $\ell(\gamma)=d(\sigma,\tau)$ and height $h(\gamma)=n=\delta(\sigma,\tau)$. It is obvious for $n=1$. We will consider the case for $n\geq 2$.
By the definition of simplicial closure, we have a sequence of simplices $\sigma'_{1},\sigma'_{2},\dots,\sigma'_{n-1}$ of $\mathcal{H}$ such that $\sigma_{i}\subseteq\sigma'_{i}$ for $i=1,2,\dots,n-1$. Thus $\gamma'=\sigma\sigma'_{1}\cdots\sigma'_{n-1}\tau$ is a path from $\sigma$ to $\tau$ of height $h(\gamma')=n=\delta(\sigma,\tau)$. Similar to the proof of Proposition \ref{proposition:path}, we have
\begin{equation*}
   \ell(\gamma)=\ell(\sigma_{0}, \sigma_{1})+\ell(\sigma_{n-1}, \sigma_{n})+n-2.
\end{equation*}
If $\ell(\sigma_{0}, \sigma_{1})<\ell(\sigma_{0}, \sigma_{1}')$, we have $\ell(\sigma_{0}, \sigma_{1})=1/2$ and $\ell(\sigma_{0}, \sigma_{1}')=1$. Recall that $\sigma_{0}\cap\sigma_{2}=\emptyset$. So one has $\sigma_{0}\subsetneq\sigma_{1}$. It follows that $\sigma_{0}\subsetneq\sigma_{1}'$. We obtain $\ell(\sigma_{0}, \sigma_{1}')=1/2$, which leads to a contradiction. Thus $\ell(\sigma_{0}, \sigma_{1}')\leq \ell(\sigma_{0}, \sigma_{1})$. So we have
\begin{equation*}
  \ell(\gamma')\leq \ell(\sigma_{0}, \sigma'_{1})+\ell(\sigma_{n-1}', \sigma_{n})+n-2\leq \ell(\gamma),
\end{equation*}
which implies $d_{\mathcal{H}}(\sigma,\tau)\leq d_{\Delta \mathcal{H}}(\sigma,\tau)$. Thus $\gamma'$ is the desired path such that $\ell(\gamma')\leq d_{\Delta\mathcal{H}}(\sigma,\tau)$ and $h(\gamma')\leq \delta_{\Delta\mathcal{H}}(\sigma,\tau).$
\end{proof}

\begin{example}\label{ex:2.3}
Let $\mathcal{H}=\{\{0\},\{1\},\{2\},\{0,1\},\{1,2\},\{0,1,2\}\}$, and it is known that $\Delta \mathcal{H}=\Delta[2]$. Considering the elements $\{0\}$ and $\{2\}$ in $\mathcal{H}$ and $\Delta \mathcal{H}$, from Example \ref{ex:2.2}, we can obtain the shortest path from $\{0\}$ to $\{2\}$ on $\mathcal{H}$ and $\Delta \mathcal{H}$, i.e., $\gamma_2'=\{0\}\{0,1,2\}\{2\}$. By calculation, we have $d_{\mathcal{H}}(\{0\}, \{2\})=d_{\Delta \mathcal{H}}(\{0\}, \{2\})=\frac{1}{2}+\frac{1}{2}=1$ and $\delta_{\mathcal{H}}(\{0\}, \{2\})=\delta_{\Delta\mathcal{H}}(\{0\}, \{2\})=2$.
\end{example}

\subsection{The magnitude of a hypergraph}
Let $\mathbb{Z}[\sqrt{q}]$ be a polynomial ring over the integers in one variable $\sqrt{q}$.
For a finite hypergraph $\mathcal{H}$, let $Z_\mathcal{H} = Z_{\mathcal{H}}(q)$ be the square matrix over $\mathbb{Z}[\sqrt{q}]$ whose rows and columns are indexed by the hyperedges of $\mathcal{H}$, and whose $(\sigma, \tau)$-entry is
\begin{equation*}
  Z_{\mathcal{H}}(q)(\sigma, \tau) = q^{d(\sigma, \tau)},\quad \sigma, \tau\in \mathcal{H},
\end{equation*}
where by convention $q^{\infty} = 0$, $0<q<1$. Since $Z_{\mathcal{H}}(q)$ is invertible, let $ \sum (Z_{\mathcal{H}}(q))^{-1}$ denote the sum of all the elements in the inverse matrix.
\begin{definition}
\emph{The magnitude of the hypergraph} $\mathcal{H}$ is defined to be
\begin{equation*}
  \# \mathcal{H}(q)=\# \mathcal{H}= \sum (Z_{\mathcal{H}}(q))^{-1}\in \mathbb{Q}(\sqrt{q}).
\end{equation*}
Here, $\mathbb{Q}(\sqrt{q})$ is the quotient field of $\mathbb{Z}[\sqrt{q}]$.
\end{definition}

Note that $\# \mathcal{H}$ can be regarded as a formal power series over $\mathbb{Z}$, that is, $\#\mathcal{H}\in \mathbb{Z}[\![\sqrt{q}]\!]$.
Let $w_{\mathcal{H}}:\mathcal{H}\rightarrow \mathbb{Q}(\sqrt{q})$ be a function from the hyperedges to the field $\mathbb{Q}(\sqrt{q})$ given by
\begin{equation*}
  w_{\mathcal{H}}(\sigma)=\sum\limits_{\tau\in \mathcal{H}}(Z_{\mathcal{H}}(q))^{-1}(\sigma, \tau).
\end{equation*}
The function $w_{\mathcal{H}}$ is called the \emph{weighting on $\mathcal{H}$}.
Since $(Z_{\mathcal{H}}(q))(Z_{\mathcal{H}}(q))^{-1}=I_{|\mathcal{H}|}$, we have the \emph{weighting equation}
\begin{equation}
  \sum\limits_{\sigma\in \mathcal{H}}q^{d(\sigma,\tau)}w_{\mathcal{H}}(\sigma)=1,\quad \tau\in \mathcal{H}. \label{(3)}
\end{equation}
Here, $|\mathcal{H}|$ denotes the number of hyperedges of $\mathcal{H}$.

\begin{lemma}\label{lemma:weight}
Let $\mathcal{H}$ be a hypergraph. If  $\widetilde{w}_{\mathcal{H}}:\mathcal{H}\rightarrow \mathbb{Q}(\sqrt{q})$ or $\mathbb{Z}[\![\sqrt{q}]\!]$ is a function satisfying the weighting equations, we have $\widetilde{w}_\mathcal{H}=w_\mathcal{H}$ and $\# \mathcal{H}= \sum\limits_{\sigma\in \mathcal{H}}\widetilde{w}_\mathcal{H}(\sigma)$.
\end{lemma}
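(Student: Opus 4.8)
The plan is to recast the weighting equations as a single matrix identity and to let the invertibility of $Z_{\mathcal H}(q)$ do all the work. First I would record that the intercrossing distance $d$ is symmetric (it is an extended metric), so $Z_{\mathcal H}(q)$ is a symmetric matrix: $Z_{\mathcal H}(q)(\sigma,\tau)=q^{d(\sigma,\tau)}=q^{d(\tau,\sigma)}=Z_{\mathcal H}(q)(\tau,\sigma)$. Writing $\mathbf{\widetilde w}$ for the column vector $(\widetilde w_{\mathcal H}(\sigma))_{\sigma\in\mathcal H}$ and $\mathbf 1$ for the all-ones column vector indexed by $\mathcal H$, the hypothesis that $\widetilde w_{\mathcal H}$ satisfies the weighting equation~\eqref{(3)} says $\sum_{\sigma}Z_{\mathcal H}(q)(\sigma,\tau)\,\widetilde w_{\mathcal H}(\sigma)=1$ for every $\tau\in\mathcal H$, which by symmetry is exactly $Z_{\mathcal H}(q)\,\mathbf{\widetilde w}=\mathbf 1$.

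Next, since $Z_{\mathcal H}(q)$ is invertible, this system has the unique solution $\mathbf{\widetilde w}=(Z_{\mathcal H}(q))^{-1}\mathbf 1$. But the $\sigma$-component of $(Z_{\mathcal H}(q))^{-1}\mathbf 1$ is precisely the row sum $\sum_{\tau}(Z_{\mathcal H}(q))^{-1}(\sigma,\tau)=w_{\mathcal H}(\sigma)$, so the canonical weighting $w_{\mathcal H}$ is itself a solution; by uniqueness, $\widetilde w_{\mathcal H}=w_{\mathcal H}$. For the value of the magnitude I would then simply sum the components: $\sum_{\sigma}\widetilde w_{\mathcal H}(\sigma)=\sum_{\sigma}w_{\mathcal H}(\sigma)=\sum_{\sigma}\sum_{\tau}(Z_{\mathcal H}(q))^{-1}(\sigma,\tau)$, which is the sum of all entries of $(Z_{\mathcal H}(q))^{-1}$, i.e. $\#\mathcal H$.

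The only genuinely delicate point is the clause permitting $\widetilde w_{\mathcal H}$ to take values in $\mathbb{Z}[\![\sqrt q]\!]$ rather than in $\mathbb{Q}(\sqrt q)$, since the uniqueness argument is a statement about solving a linear system over a field. I would handle this by embedding both $\mathbb{Q}(\sqrt q)$ and $\mathbb{Z}[\![\sqrt q]\!]$ into the common field of fractions of $\mathbb{Z}[\![\sqrt q]\!]$; there $Z_{\mathcal H}(q)$ remains invertible because $\det Z_{\mathcal H}(q)$ is a nonzero element of $\mathbb{Z}[\sqrt q]$ (the matrix being invertible over $\mathbb{Q}(\sqrt q)$), hence nonzero in the larger field, and the uniqueness conclusion applies verbatim. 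A cleaner alternative is to observe that $Z_{\mathcal H}(q)=I+N$ where $N$ has entries divisible by $\sqrt q$ (the diagonal entries are $q^{0}=1$, while $d(\sigma,\tau)\ge\frac12$ for $\sigma\neq\tau$), so $(Z_{\mathcal H}(q))^{-1}=\sum_{k\ge 0}(-N)^k$ converges $\sqrt q$-adically with entries in $\mathbb{Z}[\![\sqrt q]\!]$; this simultaneously shows $w_{\mathcal H}$ and $\#\mathcal H$ lie in $\mathbb{Z}[\![\sqrt q]\!]$ and yields uniqueness of solutions inside that ring.

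I expect this bookkeeping across the two coefficient domains to be the main (and only modest) obstacle; the core of the argument is just that an invertible matrix determines a unique solution of $Z_{\mathcal H}(q)\,\mathbf{\widetilde w}=\mathbf 1$, and that the row-sum-of-inverse description of $w_{\mathcal H}$ makes both conclusions immediate.
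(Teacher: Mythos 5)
Your proof is correct and takes essentially the same approach as the paper: the paper's (very terse) proof likewise deduces $\widetilde{w}_\mathcal{H}=w_\mathcal{H}$ from the invertibility of $Z_{\mathcal{H}}(q)$ over both coefficient rings, which forces uniqueness of solutions of the weighting equations, and then sums the weights to get $\#\mathcal{H}$. Your write-up merely makes explicit what the paper leaves implicit --- the matrix formulation $Z_{\mathcal{H}}(q)\,\mathbf{\widetilde{w}}=\mathbf{1}$ (using symmetry of $d$), and the justification that $Z_{\mathcal{H}}(q)$ is invertible over $\mathbb{Z}[\![\sqrt{q}]\!]$ via the $\sqrt{q}$-adic Neumann series $\sum_{k\ge 0}(-N)^{k}$ --- which is a welcome tightening rather than a different argument.
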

\begin{proof}
We have known the matrix $Z_{\mathcal{H}}(q)$ is invertible over $\mathbb{Q}(\sqrt{q})$ or $\mathbb{Z}[\![\sqrt{q}]\!]$. Therefore, by the weighting equations, we can obtain $w_{\mathcal{H}}:\mathcal{H}\rightarrow \mathbb{Q}(\sqrt{q})$ or $\mathbb{Z}[\![\sqrt{q}]\!]$ is unique. So $\widetilde{w}_\mathcal{H}=w_\mathcal{H}$.

\end{proof}
\begin{proposition}\label{prop2.5}
Let $\mathcal{H}$ be a hypergraph with the hyperedges $\sigma_{0},\dots,\sigma_{n}$. Then we have
\begin{equation*}
  \# \mathcal{H}=\sum_{n=0}^{\infty}(-1)^{n}\sum_{\sigma_{i_p}\neq \sigma_{i_{p+1}},p=0,\dots,n-1} q^{d(\sigma_{i_0},\sigma_{i_1})+\cdots+ d(\sigma_{i_{n-1}},\sigma_{i_n})},
\end{equation*}
where $\sigma_{i_0},\dots,\sigma_{i_n}$ are the arbitrary combination of the hyperedges of $\mathcal{H}$.
\end{proposition}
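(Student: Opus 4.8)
The plan is to realize $Z_{\mathcal{H}}(q)$ as a perturbation of the identity, expand its inverse as a Neumann (geometric) series, and then read off the sum of all entries combinatorially. First I would split
$Z_{\mathcal{H}}(q) = I + Z'$, where $I = I_{|\mathcal{H}|}$ and $Z'$ is the matrix whose $(\sigma,\tau)$-entry equals $q^{d(\sigma,\tau)}$ for $\sigma \neq \tau$ and $0$ on the diagonal. This splitting is forced by $d(\sigma,\sigma) = 0$, which makes the diagonal of $Z_{\mathcal{H}}(q)$ consist entirely of $1$'s, so $Z' = Z_{\mathcal{H}}(q) - I$ is exactly the off-diagonal part.

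Next, working in the ring $\mathbb{Z}[\![\sqrt{q}]\!]$ of formal power series, I would use a valuation bound to justify the expansion. Every nonzero entry of $Z'$ is a monomial $q^{d(\sigma,\tau)}$ with $d(\sigma,\tau)\geq \tfrac12$ (the minimal positive value of the intercrossing distance between distinct hyperedges), hence has $\sqrt{q}$-valuation at least $1$. By the multiplicativity of valuation under matrix multiplication, every entry of $(Z')^{m}$ is then a power series all of whose terms have $\sqrt{q}$-degree at least $m$. For each fixed power of $\sqrt{q}$ only finitely many $m$ contribute, so the series $\sum_{m\geq 0}(-1)^{m}(Z')^{m}$ converges $\sqrt{q}$-adically and satisfies $(I+Z')\bigl(\sum_{m\geq 0}(-1)^{m}(Z')^{m}\bigr)=I$. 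This simultaneously confirms the invertibility asserted before the definition and yields $Z_{\mathcal{H}}(q)^{-1}=\sum_{m\geq 0}(-1)^{m}(Z')^{m}$.

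Then I would sum all entries. Since $\#\mathcal{H}=\sum_{\sigma,\tau}\bigl(Z_{\mathcal{H}}(q)^{-1}\bigr)(\sigma,\tau)$ and the series converges entrywise as a formal power series, I may interchange the (finite) sum over entries with the sum over $m$ to get $\#\mathcal{H}=\sum_{m\geq 0}(-1)^{m}\sum_{\sigma,\tau}\bigl((Z')^{m}\bigr)(\sigma,\tau)$. The combinatorial identification is now immediate from the definition of matrix multiplication: $\bigl((Z')^{m}\bigr)(\sigma,\tau)=\sum q^{\,d(\sigma_{i_0},\sigma_{i_1})+\cdots+d(\sigma_{i_{m-1}},\sigma_{i_m})}$, where the sum ranges over all sequences $\sigma=\sigma_{i_0},\sigma_{i_1},\dots,\sigma_{i_m}=\tau$ with $\sigma_{i_p}\neq\sigma_{i_{p+1}}$, the inequality coming precisely from the fact that $Z'$ vanishes on the diagonal. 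Summing over the endpoints $\sigma,\tau$ then frees the constraints on $\sigma_{i_0}$ and $\sigma_{i_m}$, producing exactly the inner sum of the proposition; renaming $m$ as $n$ finishes the identification.

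The main obstacle is the convergence-and-interchange step: one must be certain that the Neumann expansion is legitimate in $\mathbb{Z}[\![\sqrt{q}]\!]$ and that the resulting double sum over $n$ and over combinatorial paths is an honest formal power series. This is exactly what the valuation estimate $\deg_{\sqrt{q}}\bigl((Z')^{m}\bigr)\geq m$ secures, since it guarantees that each coefficient of $\#\mathcal{H}$ receives contributions from only finitely many $n$, thereby legitimizing the rearrangement. Everything else is the routine bookkeeping of matrix powers.
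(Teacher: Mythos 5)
Your proposal is correct, but it takes a genuinely different route from the paper. The paper never expands the matrix inverse directly: it defines, for each hyperedge $\sigma$, a candidate weighting $\widetilde{w}_{\mathcal{H}}(\sigma)$ equal to the alternating sum over paths starting at $\sigma$, verifies the weighting equation $\sum_{\tau}q^{d(\sigma,\tau)}\widetilde{w}_{\mathcal{H}}(\tau)=1$ by splitting off the $\tau=\sigma$ term and re-indexing so that consecutive terms cancel, and then invokes the uniqueness of weightings (Lemma \ref{lemma:weight}) to conclude $\#\mathcal{H}=\sum_{\sigma}\widetilde{w}_{\mathcal{H}}(\sigma)$. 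Your Neumann expansion $(I+Z')^{-1}=\sum_{m\geq 0}(-1)^{m}(Z')^{m}$ is at bottom the same identity — the paper's telescoping cancellation is exactly $(I+Z')\sum_{m}(-1)^{m}(Z')^{m}=I$ read row by row against the all-ones vector — but your packaging buys two things the paper leaves implicit: it actually proves the invertibility of $Z_{\mathcal{H}}(q)$ over $\mathbb{Z}[\![\sqrt{q}]\!]$, which the paper asserts without justification before defining the magnitude, and your valuation bound (each off-diagonal entry has $\sqrt{q}$-degree at least $1$, hence $(Z')^{m}$ has degree at least $m$) makes explicit the formal convergence and rearrangement of infinite sums that the paper's computation silently relies on. What the paper's route buys in exchange is contact with the weighting formalism standard in magnitude theory, reusing Lemma \ref{lemma:weight} rather than redoing matrix algebra. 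Two small points to polish in your write-up: the phrase ``multiplicativity of valuation under matrix multiplication'' should be stated as sub-additivity (each entry of a product is a sum of products, so its valuation is at least the sum of the minimal entry valuations of the factors); and since $\#\mathcal{H}$ is defined as the entry sum of the inverse over $\mathbb{Q}(\sqrt{q})$, you should add one sentence identifying that inverse with your power-series inverse, which is immediate because $Z_{\mathcal{H}}(0)=I$ forces the determinant to be nonvanishing at $q=0$, so both inverses agree in $\mathbb{Q}(\!(\sqrt{q})\!)$.
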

\begin{proof}
For each $\sigma\in \mathcal{H}$, we define $\widetilde{w}_\mathcal{H}(\sigma)\in Z[\![\sqrt{q}]\!]$ by
$$
\widetilde{w}_\mathcal{H}(\sigma)=\sum_{n=0}^{\infty}(-1)^{n}\sum_{\sigma=\sigma_{i_0}, \sigma_{i_p}\neq \sigma_{i_{p+1}},p=0,\dots,n-1} q^{d(\sigma_{i_0},\sigma_{i_1})+\cdots+ d(\sigma_{i_{n-1}},\sigma_{i_n})}.
$$
By the lemma \ref{lemma:weight}, we have known that if $\widetilde{w}_\mathcal{H}(\sigma)$ satisfies weighting equation, we have $\# \mathcal{H}= \sum\limits_{\sigma\in \mathcal{H}}\widetilde{w}_\mathcal{H}(\sigma)$.
Therefore,
\begin{equation*}
  \begin{split}
      \sum_{\tau\in \mathcal{H}}q^{d(\sigma,\tau)}\widetilde{w}_\mathcal{H}(\tau)=& \widetilde{w}_\mathcal{H}(\sigma)+ \sum_{\tau:\tau\neq\sigma}q^{d(\sigma,\tau)}\widetilde{w}_\mathcal{H}(\tau) \\
    =&\widetilde{w}_\mathcal{H}(\sigma)+\sum_{n=0}^{\infty}(-1)^{n}\sum_{\sigma\neq\tau_{i_0},\tau_{i_p}\neq \tau_{i_{p+1}},p=0,\dots,n-1} q^{d(\sigma,\tau_{i_0})+d(\tau_{i_0},\tau_{i_1})+\cdots+ d(\tau_{i_{n-1}},\tau_{i_n})}\\
     =&1+\sum_{n=1}^{\infty}(-1)^{n}\sum_{\sigma_{i_p}\neq \sigma_{i_{p+1}},p=1,\dots,n-1} q^{d(\sigma_{i_0},\sigma_{i_1})+d(\sigma_{i_1},\sigma_{i_2})+\cdots+ d(\sigma_{i_{n-1}},\sigma_{i_n})}+\\
      &\sum_{m=0}^{\infty}(-1)^{m}\sum_{\sigma\neq\tau_{i_0},\tau_{i_p}\neq \tau_{i_{p+1}},p=0,\dots,m-1} q^{d(\sigma,\tau_{i_0})+d(\tau_{i_0},\tau_{i_1})+\cdots+ d(\tau_{i_{m-1}},\tau_{i_m})}\\
  \end{split}
\end{equation*}
Let $n-1=m$, we can convert the formula $$\sum_{n=1}^{\infty}(-1)^{n}\sum_{\sigma_{i_p}\neq \sigma_{i_{p+1}},p=1,\dots,n-1} q^{d(\sigma_{i_0},\sigma_{i_1})+d(\sigma_{i_1},\sigma_{i_2})+\cdots+ d(\sigma_{i_{n-1}},\sigma_{i_n})}.$$
By calculation,  $\sum\limits_{\tau\in \mathcal{H}}q^{d(\sigma,\tau)}\widetilde{w}_\mathcal{H}(\tau)=1$.
\end{proof}

\begin{example}\label{em:2.4}
(\textbf{The magnitude of the $2$-simplex})\label{exam.2.1}
Given a simplicial complex $\Delta[2]$=$\{\{0\},$ $\{1\},\{2\}, \{0,1\}, \{0,2\}, \{1,2\}, \{0,1,2\}\}$ as an example, we will now examine its magnitude. The following table, which lists the distances between simplices, serves as a useful tool for calculating the magnitude $\#\mathcal{H}$.
\begin{table}[H]
  \centering
  \caption{The distance of simplices}\label{table:distance1}
  \begin{tabular}{c|ccccccc}
    \hline
    distance & \{0\}& \{1\}&\{2\}&\{0, 1\}&\{0, 2\}&\{1, 2\}&\{0, 1, 2\}\\
        \hline
    \{0\}& 0 & 1 & 1 & $\frac{1}{2}$ & $\frac{1}{2}$ & 1 & $\frac{1}{2}$ \\

    \{1\}& 1 & 0 & 1 & $\frac{1}{2}$ & 1 & $\frac{1}{2}$ & $\frac{1}{2}$ \\

    \{2\}& 1 & 1 & 0 & 1 & $\frac{1}{2}$ & $\frac{1}{2}$ & $\frac{1}{2}$ \\

    \{0, 1\}& $\frac{1}{2}$ & $\frac{1}{2}$ & 1 & 0 & 1 & 1 & $\frac{1}{2}$ \\

    \{0, 2\}& $\frac{1}{2}$ & 1 & $\frac{1}{2}$ & 1 & 0 & 1 & $\frac{1}{2}$ \\

    \{1, 2\}& 1 & $\frac{1}{2}$ & $\frac{1}{2}$ & 1 & 1 & 0 & $\frac{1}{2}$ \\

    \{0, 1, 2\} & $\frac{1}{2}$ & $\frac{1}{2}$ & $\frac{1}{2}$ & $\frac{1}{2}$ & $\frac{1}{2}$ & $\frac{1}{2}$ & 0\\
    \hline
  \end{tabular}

\end{table}

By Proposition \ref{prop2.5}, we have
\begin{itemize}
  \item[(i)] When $n=0$, the first term of $\#\mathcal{H}$ is $\#\mathcal{H}_1= (-1)^0 \times \sum\limits_{i=0}^6 q^{d(\sigma_i, \sigma_i)}= (-1)^0 \times 7$.
  \item[(ii)] When $n=1$, then we have the sum of the first two terms of $\#\mathcal{H}$ is
  \begin{equation*}
  \begin{split}
  \#\mathcal{H}_2&= (-1)^0 \times \sum\limits_{i=0}^6 q^{d(\sigma_i, \sigma_i)} + (-1)^1 \times \sum\limits_{\substack{\sigma_i\neq \sigma_j}} q^{d(\sigma_i, \sigma_j)}\\
               &= (-1)^0 \times 7 + (-1)^1\times(6 (q+q+q+q^{\frac{1}{2}}+q^{\frac{1}{2}}+q^{\frac{1}{2}})+ 6q^{\frac{1}{2}})\\
               &= (-1)^0 \times 7 + (-1)^1\times(6 (3q+3q^{\frac{1}{2}})+6q^{\frac{1}{2}})
  \end{split}
  \end{equation*}
\item [(iii)] When $n=2$, we can get
\small{  \begin{equation*}
  \begin{split}
  \#\mathcal{H}&= (-1)^0 \times \sum\limits_{i=0}^6 q^{d(\sigma_i, \sigma_i)} + (-1)^1 \times \sum\limits_{\substack{\sigma_i\neq \sigma_j}} q^{d(\sigma_i, \sigma_{j})}+ (-1)^2 \times \sum\limits_{\substack{\sigma_i\neq \sigma_{j},\sigma_j\neq \sigma_{k}}} q^{d(\sigma_i, \sigma_j)+d(\sigma_j, \sigma_k)}\\
              &= (-1)^0 \times 7 + (-1)^1\times(6 (3q+3q^{\frac{1}{2}})+6q^{\frac{1}{2}})+ (-1)^2\times(6 (3q+3q^{\frac{1}{2}})^2+(6q^{\frac{1}{2}})^2)
  \end{split}
  \end{equation*}}
\end{itemize}
Continuing the calculation process described above, it is worth noting that since $n$ can be infinite, we will provide a partial expression for $\Delta[2]$,
\begin{equation*}
  \begin{split}
  \#\mathcal{H}=& (-1)^{0}\times7 + (-1)^1\times(6 (3q+3q^{\frac{1}{2}})+6q^{\frac{1}{2}})+ (-1)^2\times(6 (3q+3q^{\frac{1}{2}})^2+(6q^{\frac{1}{2}})^2)+\cdots\\
                & (-1)^{6}\times(6 (3q+3q^{\frac{1}{2}})^6+(6q^{\frac{1}{2}})^6)+\cdots\\
               =&\sum^{\infty}_{n=0}(-1)^{n}(6 (3q+3q^{\frac{1}{2}})^n+(6q^{\frac{1}{2}})^n)+\cdots\\
               =& 7 - 24 q^{\frac{1}{2}} + 72 q - 270 q^{\frac{3}{2}} + 1350 q^2 + 1458 q^{\frac{5}{2}} +
                      2754 q^3 + 1944 q^{\frac{7}{2}} + 486 q^4+ \cdots.
  \end{split}
\end{equation*}

\end{example}

\begin{example}\label{em:2.5}
(\textbf{The magnitude of a hypergraph})
Consider the hypergraph $\mathcal{H}=\{\{0\},\{1\},$ $\{2\},\{0,1\},\{1,2\},\{0,1,2\}\}$ ( Fig.\ref{fig:hypergraph}(a)). By utilizing Proposition \ref{prop2.5}, we can calculate the magnitude of the hypergraph. Employing the previously defined notion of distance between hyperedges, we derive the following table that depicts the distances between simplices.
\begin{table}[H]
  \centering
  \caption{The distance of simplices}\label{table:distance2}
  \begin{tabular}{c|ccccccc}
    \hline
    distance & \{0\}& \{1\}&\{2\}&\{0, 1\}&\{1, 2\}&\{0, 1, 2\}\\
        \hline
    \{0\}& 0 & 1 & 1 & $\frac{1}{2}$ &  1 & $\frac{1}{2}$ \\

    \{1\}& 1 & 0 & 1 & $\frac{1}{2}$ & $\frac{1}{2}$ & $\frac{1}{2}$ \\

    \{2\}& 1 & 1 & 0 & 1 & $\frac{1}{2}$ & $\frac{1}{2}$ \\

    \{0, 1\}& $\frac{1}{2}$ & $\frac{1}{2}$ & 1 & 0 & 1 & $\frac{1}{2}$ \\

    \{1, 2\}& 1 & $\frac{1}{2}$ & $\frac{1}{2}$ & 1 & 0 & $\frac{1}{2}$ \\

    \{0, 1, 2\} & $\frac{1}{2}$ & $\frac{1}{2}$ & $\frac{1}{2}$ & $\frac{1}{2}$ & $\frac{1}{2}$ & 0\\
    \hline
  \end{tabular}

\end{table}
By the same calculation method as in Example \ref{exam.2.1}, we can obtain
\begin{equation*}
  \begin{split}
  \#\mathcal{H}=& (-1)^{0}\times6 + (-1)^{1}\times(2(3q+2q^{\frac{1}{2}})+3(2q+3q^{\frac{1}{2}})+5q^{\frac{1}{2}})\\
                & (-1)^{2}\times(2(3q+2q^{\frac{1}{2}})^2+3(2q+3q^{\frac{1}{2}})^2+(5q^{\frac{1}{2}})^2)+\cdots\\
                & (-1)^{5}\times(2(3q+2q^{\frac{1}{2}})^5+3(2q+3q^{\frac{1}{2}})^5+(5q^{\frac{1}{2}})^5)+\cdots\\
               =&\sum^{\infty}_{n=0}(-1)^{n}(2(3q+2q^{\frac{1}{2}})^n+3(2q+3q^{\frac{1}{2}})^n+(5q^{\frac{1}{2}})^n)\\
               =& 6 - 18q^{\frac{1}{2}}  + 48 q - 162 q^{\frac{3}{2}} + 696 q^2 + 624 q^{\frac{5}{2}} +1002 q^3 + 720 q^{\frac{7}{2}} + 210 q^4+ \cdots
  \end{split}
\end{equation*}

\begin{figure}[H]
\begin{center}
  \centering
  \subfigure[]{
  \begin{minipage}[c]{0.47\textwidth}
  \centering
  \includegraphics[scale=0.4]{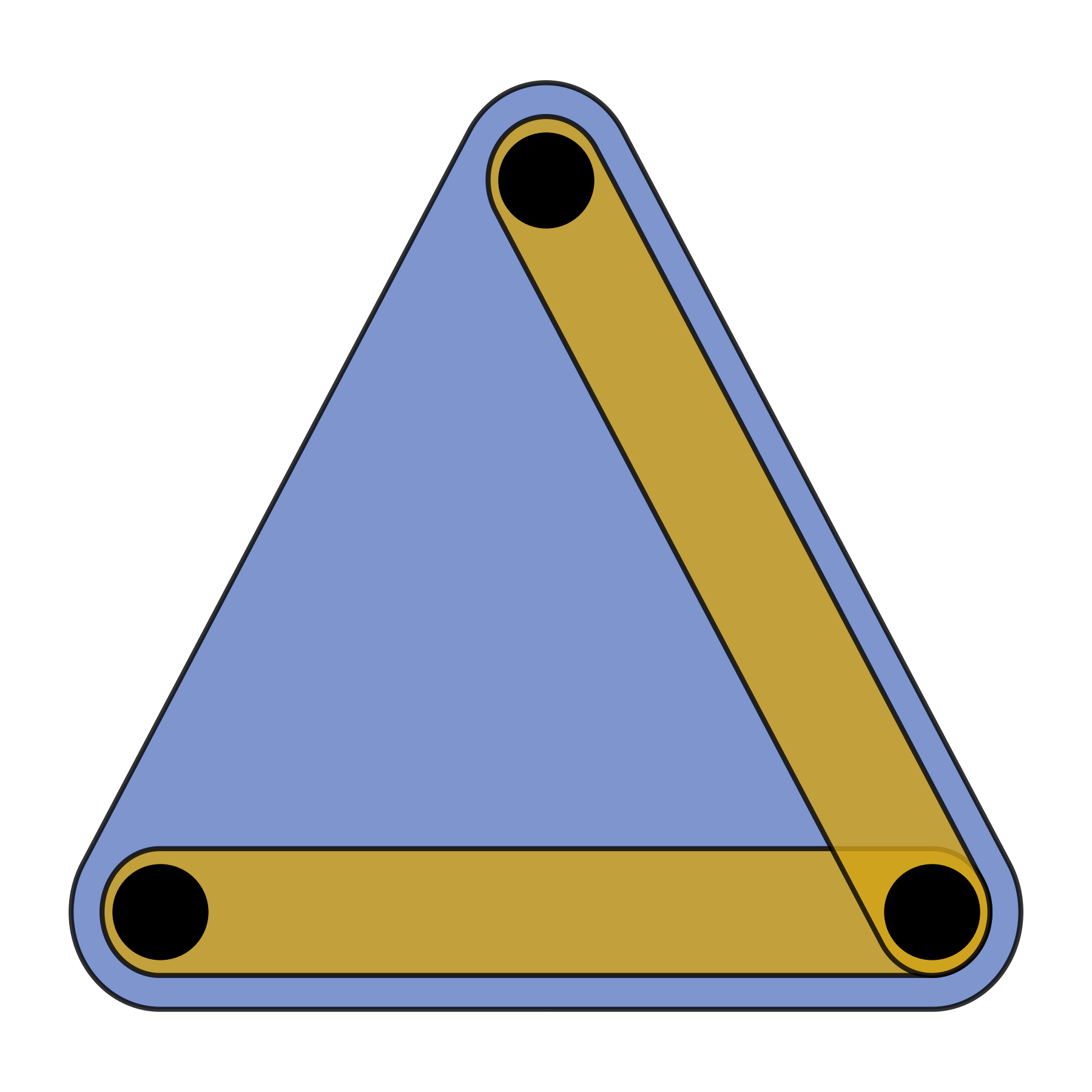}
 \end{minipage}
   }
 \subfigure[]{
 \begin{minipage}[c]{0.47\textwidth}
 \centering
 \includegraphics[scale=0.4]{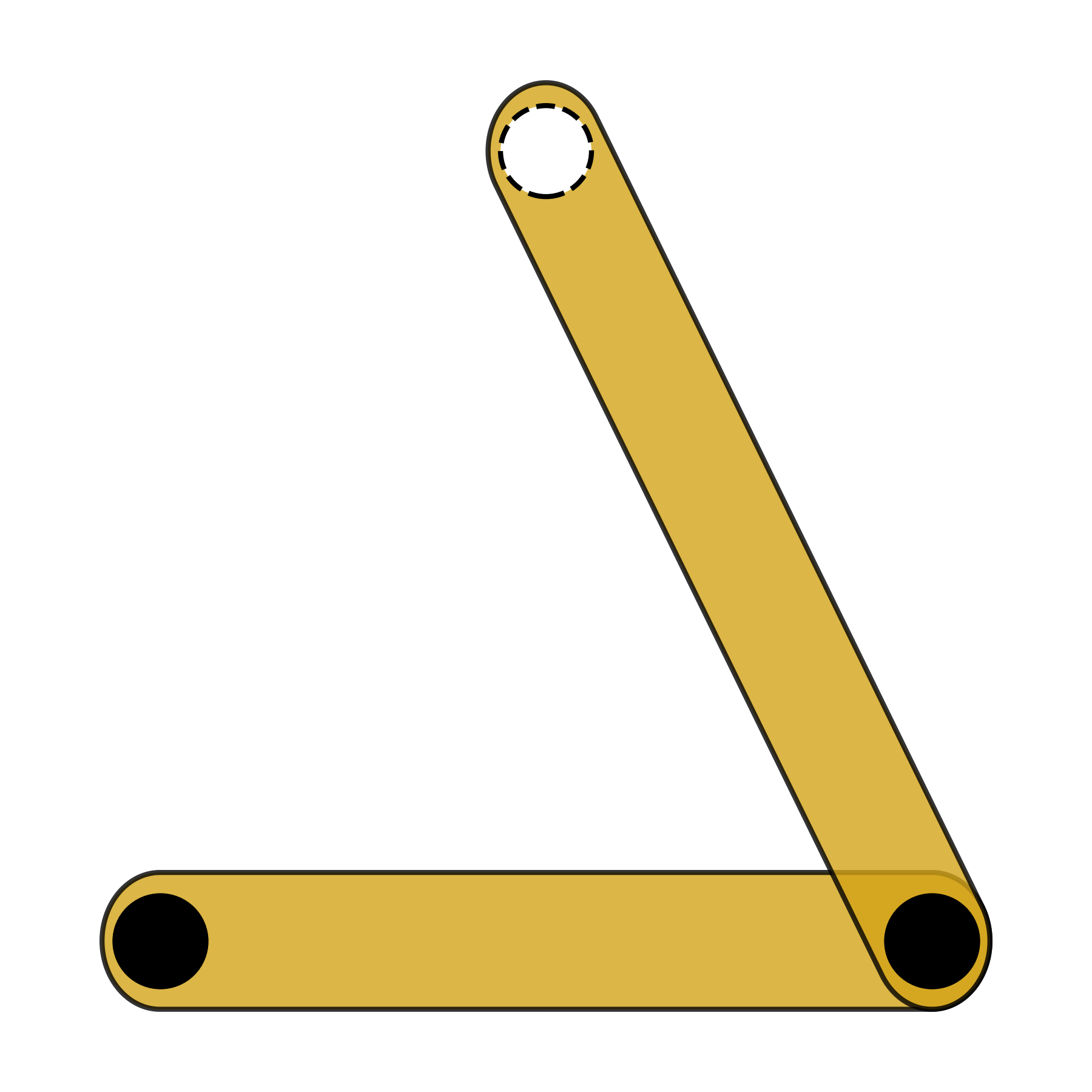}
 \end{minipage}
 }
 \caption{(a) The hypergraph $\mathcal{H}=\{\{0\},\{1\},$ $\{2\},\{0,1\},\{1,2\},\{0,1,2\}\}$; (b) The hypergraph $\mathcal{H}=\{\{0\},\{1\},\{0,1\},\{1,2\}\}$.}
 \label{fig:hypergraph}
\end{center}
\end{figure}

\end{example}

\subsection{The magnitude homology of a hypergraph}\label{section:magnitude}
Let $\mathcal{H}$ be a hypergraph. The \emph{length of the tuple}  $(\sigma_{0},\sigma_{1},\dots,\sigma_{k})$ of hyperedges of $\mathcal{H}$ is defined by
\begin{equation*}
  \mathcal{L}(\sigma_{0},\sigma_{1},\dots,\sigma_{k})=d(\sigma_{0},\sigma_{1})+\cdots+d(\sigma_{k-1},\sigma_{k}).
\end{equation*}
Note that the intercrossing distance $d$ is an extended metric on $\mathcal{H}$. By the triangle inequality, one has
\begin{equation}
\mathcal{L}(\sigma_0, \dots, \hat{\sigma_i}, \dots, \sigma_k)\leq \mathcal{L}(\sigma_0, \dots, \sigma_k), \label{(1)}
\end{equation}
where $\hat{\sigma_{i}}$ means the omission of the term $\sigma_{i}$.
\begin{definition}\label{def:2.5}
Let $A$ be an abelian group. The \emph{magnitude chain complex $\mathbf{MC}_{\ast,\ast}(\mathcal{H};A)$} of a hypergraph $\mathcal{H}$ with coefficient $A$ is a chain complex $\bigoplus\limits_{l\geq 0}\mathbf{MC}_{\ast,l}(\mathcal{H};A)$ defined as follows. For each integer $k\geq 0$, $\mathbf{MC}_{k,l}(\mathcal{H};A)$ is a free abelian group over $A$ generated by the tuples $(\sigma_{0},\dots,\sigma_{k})$ of adjacent distinct hyperedges satisfying $\mathcal{L}(\sigma_{0},\dots,\sigma_{k})=l$. The differential $\partial_{l}:\mathbf{MC}_{k,l}(\mathcal{H};A)\to \mathbf{MC}_{k-1,l}(\mathcal{H};A)$ is given by $\partial_{l}=\sum\limits_{i=0}^{k}(-1)^i\partial_{i,l}$ with
\begin{equation*}
  \partial_{i,l}(\sigma_{0},\dots,\sigma_{k})=\left\{
                                              \begin{array}{ll}
                                                (\sigma_{0},\dots,\hat{\sigma_{i}},\dots,\sigma_{k}) & \hbox{$\mathcal{L}(\sigma_{0},\dots,\hat{\sigma_{i}},\dots,\sigma_{k})=l$}, \\
                                                0 & \hbox{otherwise.}
                                              \end{array}
                                            \right.
\end{equation*}
\end{definition}

\begin{lemma}\label{lem:different}
For any hypergraph $\mathcal{H}$, when $k\geq 2$, $l\geq 0$, we have $\partial_l\circ \partial_l=0$, that is, the composition
$$
\mathbf{MC}_{k,l}(\mathcal{H})\stackrel{\partial_l}{\longrightarrow}\mathbf{MC}_{k-1,l}(\mathcal{H})\stackrel{\partial_l}{\longrightarrow}\mathbf{MC}_{k-2,l}(\mathcal{H})
$$
is zero.
\end{lemma}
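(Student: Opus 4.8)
The plan is to adapt the classical proof that the simplicial face operator squares to zero, the only new ingredient being the length constraint that switches each partial face map $\partial_{i,l}$ on or off. The strategy is to expand $\partial_l\circ\partial_l$ on a generator, collect the terms that delete the same unordered pair of indices in the two possible orders, and show that each such pair is either both active with opposite signs (hence cancels) or both zero. The crucial point will be to verify that the length bookkeeping is consistent between the two orders, and this is exactly where the monotonicity inequality \eqref{(1)} is used.

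First I would fix a generator $(\sigma_0,\dots,\sigma_k)$ of $\mathbf{MC}_{k,l}(\mathcal{H})$, so that $\mathcal{L}(\sigma_0,\dots,\sigma_k)=l$, and expand
$$
\partial_l\partial_l(\sigma_0,\dots,\sigma_k)=\sum_{i=0}^{k}\ \sum_{j'=0}^{k-1}(-1)^{i+j'}\,\partial_{j',l}\bigl(\partial_{i,l}(\sigma_0,\dots,\sigma_k)\bigr),
$$
where $j'$ ranges over the positions of the length-$(k-1)$ tuple. Each nonzero summand deletes two original positions $i$ and $j$. For $i<j$ the entry $\sigma_j$ sits in position $j'=j-1$ of the shortened tuple, so deleting $\sigma_i$ first and then $\sigma_j$ carries sign $(-1)^{i+j-1}$, whereas deleting $\sigma_j$ first (then $\sigma_i$, which remains in position $i$) carries sign $(-1)^{j+i}$; these are opposite, so the two orderings cancel whenever both are active.

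The heart of the argument is the following observation. Write $\tau_{\hat{i}}$, $\tau_{\hat{j}}$, and $\tau_{\hat{i}\hat{j}}$ for the tuples obtained from $(\sigma_0,\dots,\sigma_k)$ by deleting $\sigma_i$, deleting $\sigma_j$, and deleting both, respectively. The order ``delete $\sigma_i$ first'' is active exactly when $\mathcal{L}(\tau_{\hat{i}})=l$ and $\mathcal{L}(\tau_{\hat{i}\hat{j}})=l$, and the opposite order is active exactly when $\mathcal{L}(\tau_{\hat{j}})=l$ and $\mathcal{L}(\tau_{\hat{i}\hat{j}})=l$. If $\mathcal{L}(\tau_{\hat{i}\hat{j}})=l$, then applying \eqref{(1)} twice gives
$$
l=\mathcal{L}(\tau_{\hat{i}\hat{j}})\le \mathcal{L}(\tau_{\hat{i}})\le \mathcal{L}(\sigma_0,\dots,\sigma_k)=l,
$$
forcing $\mathcal{L}(\tau_{\hat{i}})=l$, and symmetrically $\mathcal{L}(\tau_{\hat{j}})=l$; hence both orderings are simultaneously active and cancel. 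If instead $\mathcal{L}(\tau_{\hat{i}\hat{j}})<l$, the outer face map annihilates both orderings, so both contribute zero. Summing over all unordered pairs $\{i,j\}$ yields $\partial_l\partial_l=0$.

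I expect the only real obstacle to be keeping the re-indexing and signs transparent; the genuine content is the one-line monotonicity computation above. I would also remark that positive-definiteness of the extended metric $d$ makes every active face land automatically among the generators: if $\sigma_{i-1}=\sigma_{i+1}$ then $d(\sigma_{i-1},\sigma_{i+1})=0$ while $d(\sigma_{i-1},\sigma_i)+d(\sigma_i,\sigma_{i+1})>0$, so $\mathcal{L}(\tau_{\hat{i}})<l$ and that face is already inactive. Consequently every nonzero face is again a tuple of adjacent distinct hyperedges, and no separate verification is needed.
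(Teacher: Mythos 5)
Your proof is correct and takes essentially the same route as the paper's: both arguments reduce $\partial_l\circ\partial_l=0$ to comparing the two orders of deleting a pair of indices $i<j$, and both use the monotonicity inequality (\ref{(1)}) to conclude that either the doubly-deleted tuple still has length $l$ (in which case both orders are active and cancel in sign) or it has length strictly less than $l$ (in which case both orders are annihilated). Your closing remark that positive definiteness of $d$ automatically deactivates any face whose deletion would create equal adjacent entries is a detail the paper leaves implicit, but it does not change the substance of the argument.
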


\begin{proof}
Our aim is to prove $\sum\limits_{i=0}^{k-1}(-1)^i\partial_{i,l}\circ \sum\limits_{j=0}^{k}(-1)^j\partial_{j,l}=0,$ where $0\leq i< j \leq k$. It suffices to show
$$
\partial_{i,l}\circ \partial_{j,l}(\sigma_0,\dots,\sigma_k)=\partial_{j-1,l}\circ \partial_{i, l} (\sigma_0,\dots,\sigma_k)
$$
for any generator $(\sigma_0,\dots,\sigma_k)$ of $\mathbf{MC}_{k,l}(\mathcal{H})$. By definition, we have
$$\mathcal{L}(\sigma_0,\dots, \hat{\sigma_i},\dots, \hat{\sigma_j}, \dots, \sigma_k)\leq \mathcal{L}(\sigma_0,\dots, \hat{\sigma_i}, \dots, \sigma_k)\leq \mathcal{L}(\sigma_0,\dots,\sigma_k)=l.$$
If $\mathcal{L}(\sigma_0,\dots, \hat{\sigma_i},\dots, \hat{\sigma_j}, \dots, \sigma_k)< \mathcal{L}(\sigma_0,\dots,\sigma_k)=l,$ both sides are equal to zero. The desired equation follows. If
$$
\mathcal{L}(\sigma_0,\dots, \hat{\sigma_i},\dots, \hat{\sigma_j}, \dots, \sigma_k)= \mathcal{L}(\sigma_0,\dots, \hat{\sigma_i}, \dots, \sigma_k)= \mathcal{L}(\sigma_0,\dots,\sigma_k)=l,
$$
both sides are of the form $(\sigma_0,\dots, \hat{\sigma_i},\dots, \hat{\sigma_j}, \dots, \sigma_k)$. It follows that $\partial_{i,l}\circ \partial_{j,l}(\sigma_0,\dots,\sigma_k)=\partial_{j-1,l}\circ \partial_{i, l}(\sigma_0,\dots,\sigma_k)$.
Therefore, we have $\sum\limits_{i=0}^{k-1}(-1)^i\partial_{i,l}\circ \sum\limits_{j=0}^{k}(-1)^j\partial_{j,l}=0.$ This completes the proof.
\end{proof}

\begin{definition}
The \emph{magnitude homology $\mathbf{MH}_{k,l}(\mathcal{H};A)$} of the hypergraph $\mathcal{H}$ with coefficient $A$ is defined by the homology of the magnitude chain complex
\begin{equation*}
  \mathbf{MH}_{k,l}(\mathcal{H};A)=H_{k,l}(\mathbf{MC}_{\ast,\ast}(\mathcal{H};A)),\quad k,l\geq 0.
\end{equation*}
\end{definition}
From now on, the ground ring is assumed to be $\mathbb{Z}$, and we denote $\mathbf{MH}_{n,l}(\mathcal{H})=\mathbf{MH}_{n,l}(\mathcal{H};\mathbb{Z})$ for convenience.

\begin{example}\label{em:2.6}
(\textbf{The magnitude homology of a hypergraph})
Given a hypergraph $\mathcal{H}=\{\{0\},\{1\},\{0,1\},\{1,2\}\}$, we will show its magnitude homology below(see  Fig.\ref{fig:hypergraph}(b)). According to the definition of magnitude homology, there are two significant values: integers $k$ and length $l$. We also know that $k$ is infinite, and the calculation becomes complicated due to the definition of magnitude homology. Next, we will discuss the different conditions and their implications. Here, we only consider values of $k$ up to 2.
\begin{itemize}
  \item [(i)] When $k=0$, $l=0$, we have the generators $(\{0\})$, $(\{1\})$, $(\{0, 1\})$, $(\{1, 2\})$.
  \item [(ii)] When $k=1$, $l$ has two following cases.
  \begin{itemize}
   \item[(a)]$l=\frac{1}{2}$, we have generators $(\{0\}, \{0, 1\})$, $(\{1\}, \{0,1\})$, $(\{1\}, \{1, 2\})$, $(\{0, 1\}, \{0\})$, $(\{0, 1\},$ $ \{1\})$, $( \{1, 2\},\{1\})$.
  \item[(b)]$l=1$, we get $(\{0\}, \{1\})$, $(\{0\}, \{1, 2\})$, $(\{1\}, \{0\})$, $(\{1, 2\}, \{0\})$, $(\{0, 1\}, \{1, 2\})$, $(\{1, 2\}, \{0, 1\})$.
  \end{itemize}
  \item [(iii)] When $k=2$, there is three options for $l$.
  \begin{itemize}
   \item[(a)]$l=1$, generators are  $(\{0\}, \{0, 1\}, \{0\})$, $(\{0\}, \{0, 1\}, \{1\})$, $(\{1\}, \{0,1\}, \{0\})$, $(\{1\}, \{0,1\},$ $\{1\})$, $(\{1\}, \{1, 2\}, \{1\})$, $(\{0, 1\}, \{0\}, \{0, 1\})$, $(\{0, 1\}, \{1\}, \{0, 1\})$, $(\{0, 1\}, \{1\}, \{1, 2\})$, $( \{1, 2\},\{1\}, \{0, 1\})$, $( \{1, 2\},\{1\}, \{1, 2\})$.
  \item[(b)]$l=\frac{3}{2}$, we obtain $(\{0\}, \{0, 1\}, \{1, 2\})$, $(\{1\}, \{0,1\}, \{1, 2\})$, $(\{1\}, \{1, 2\}, \{0\})$, $(\{1\},$ $\{1, 2\}, \{0, 1\})$, $(\{0, 1\}, \{0\}, \{1\})$, $(\{0, 1\}, \{0\}, \{1, 2\})$, $(\{0, 1\}, \{1\}, \{0\})$, $(\{1, 2\}, \{1\},$ $\{0\})$, $(\{0\}, \{1\}, \{0, 1\})$, $(\{0\}, \{1\}, \{1, 2\})$, $(\{0\}, \{1, 2\}, \{1\})$, $(\{1\}, \{0\}, \{0, 1\})$, $(\{1, 2\},$ $\{0\}, \{0, 1\})$, $(\{0, 1\}, \{1, 2\}, \{1\})$, $(\{1, 2\}, \{0, 1\}, \{0\})$, $(\{1, 2\}, \{0, 1\}, \{1\})$.
  \item[(c)]$l=2$, we get $(\{0\}, \{1\}, \{0\})$, $(\{0\}, \{1, 2\}, \{0\})$, $(\{0\}, \{1, 2\}, \{0, 1\})$, $(\{1\}, \{0\}, \{1\})$, $(\{1\}, \{0\}, \{1, 2\})$, $(\{1, 2\}, \{0\}, \{1\})$, $(\{1, 2\}, \{0\}, \{1, 2\})$, $(\{0, 1\}, \{1, 2\}, \{0, 1\})$, $(\{0, 1\},$ $\{1, 2\}, \{0\})$, $(\{1, 2\}, \{0, 1\}, \{1, 2\})$ .
  \end{itemize}
\end{itemize}

Based on the above arrays, we consider the magnitude chain complex, where $l=0, \frac{1}{2}, 1$ is given as an example. The chain complexes are given by
$$
\cdots\longrightarrow \mathbf{MC}_{2,0}(\mathcal{H})\stackrel{\partial_0}\longrightarrow \mathbf{MC}_{1,0}(\mathcal{H})\stackrel{\partial_0}\longrightarrow \mathbf{MC}_{0,0}(\mathcal{H})\longrightarrow 0,
$$
$$
\cdots\longrightarrow \mathbf{MC}_{2,\frac{1}{2}}(\mathcal{H})\stackrel{\partial_{\frac{1}{2}}}\longrightarrow \mathbf{MC}_{1,\frac{1}{2}}(\mathcal{H})\stackrel{\partial_{\frac{1}{2}}}\longrightarrow \mathbf{MC}_{0,\frac{1}{2}}(\mathcal{H})\longrightarrow 0,
$$
and
$$
\cdots\longrightarrow \mathbf{MC}_{2,1}(\mathcal{H})\stackrel{\partial_1}\longrightarrow \mathbf{MC}_{1,1}(\mathcal{H})\stackrel{\partial_1}\longrightarrow \mathbf{MC}_{0,1}(\mathcal{H})\longrightarrow 0.
$$
From the first magnitude chain complex, we can calculate $\mathbf{MH}_{0,0}(\mathcal{H})=\mathbb{Z}\oplus \mathbb{Z}\oplus \mathbb{Z} \oplus \mathbb{Z}$. Because the term $\mathbf{MC}_{1,0}(\mathcal{H})$ is zero, the kernel of $\mathbf{MC}_{0,0}(\mathcal{H})$ is generated by $\mathbb{Z}\langle(\{0\}), (\{1\}), (\{0, 1\}),$ $(\{1, 2\})\rangle$. By performing the same calculation, we also obtain $\mathbf{MH}_{k,0}(\mathcal{H})=0$, $k\neq 0$. According to the second magnitude chain complex, repeating the above calculation, we get $\mathbf{MH}_{1,\frac{1}{2}}(\mathcal{H})= \mathbb{Z}\oplus \mathbb{Z} \oplus \mathbb{Z} \oplus \mathbb{Z} \oplus \mathbb{Z} \oplus \mathbb{Z}$, zero in other cases. When calculating the case for $l=1$, we can only get $\mathbf{MH}_{1,1}(\mathcal{H})=\mathbb{Z}\oplus \mathbb{Z}$ based on the array currently listed, and if we continue further, the computational workload will progressively increase. Readers who are interested can continue with the calculations.
\end{example}
Here, we have only considered simple hypergraphs. Because as we know from the paper \cite{hepworth2017categorifying}, even though the magnitude homology of the graphs is already large, the authors used a Sage (Computer Software) for the calculation. We just provide here the method of calculating the magnitude homology of hypergraphs.

\begin{theorem}\label{Eluer}
Let $\mathcal{H}$ be a hypergraph. Then
\begin{equation*}
  \sum\limits_{k,l\geq 0}(-1)^{k}\mathrm{rank}(\mathbf{MH}_{k,l}(\mathcal{H}))\cdot q^{l}=\#\mathcal{H}.
\end{equation*}
Here, $n\in \mathbb{Z},l\in \frac{1}{2}\mathbb{Z}$.
\end{theorem}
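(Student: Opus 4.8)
The plan is to use the fact that the magnitude chain complex splits as a direct sum over the length grading $l$, and that for each fixed $l$ the resulting complex is a bounded complex of finitely generated free abelian groups. For such a complex the alternating sum of the ranks of the chain groups equals the alternating sum of the ranks of the homology groups, and I will then match the resulting chain-level generating function with the expansion of $\#\mathcal{H}$ furnished by Proposition \ref{prop2.5}.

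First I would fix $l\in\frac{1}{2}\mathbb{Z}_{\geq 0}$ and check that $\mathbf{MC}_{k,l}(\mathcal{H})$ vanishes for all but finitely many $k$. Indeed, a generator $(\sigma_{0},\dots,\sigma_{k})$ consists of adjacent \emph{distinct} hyperedges, so each summand satisfies $d(\sigma_{i},\sigma_{i+1})\geq\frac{1}{2}$, whence $l=\mathcal{L}(\sigma_{0},\dots,\sigma_{k})\geq k/2$ and therefore $k\leq 2l$. Since $\mathcal{H}$ is finite, there are only finitely many such tuples of each length, so every $\mathbf{MC}_{k,l}(\mathcal{H})$ is a finitely generated free abelian group and the complex $\mathbf{MC}_{\ast,l}(\mathcal{H})$ is bounded. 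The standard rank--nullity argument (invariance of the Euler characteristic under passage to homology) then gives, for each fixed $l$,
\begin{equation*}
  \sum_{k\geq 0}(-1)^{k}\,\mathrm{rank}\bigl(\mathbf{MC}_{k,l}(\mathcal{H})\bigr)=\sum_{k\geq 0}(-1)^{k}\,\mathrm{rank}\bigl(\mathbf{MH}_{k,l}(\mathcal{H})\bigr).
\end{equation*}

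Next I would multiply this identity by $q^{l}$ and sum over all $l\in\frac{1}{2}\mathbb{Z}_{\geq 0}$. Because each value of $l$ contributes only finitely many generators, the coefficient of each power of $\sqrt{q}$ is a finite integer, so this is a legitimate equality of formal power series in $\mathbb{Z}[\![\sqrt{q}]\!]$. It remains to identify the left-hand side. By Definition \ref{def:2.5}, $\mathrm{rank}(\mathbf{MC}_{k,l}(\mathcal{H}))$ is exactly the number of tuples $(\sigma_{0},\dots,\sigma_{k})$ of adjacent distinct hyperedges with $\mathcal{L}(\sigma_{0},\dots,\sigma_{k})=l$, so grouping the tuples by their length yields
\begin{equation*}
  \sum_{k,l\geq 0}(-1)^{k}\,\mathrm{rank}\bigl(\mathbf{MC}_{k,l}(\mathcal{H})\bigr)\,q^{l}=\sum_{k\geq 0}(-1)^{k}\sum_{\sigma_{i}\neq\sigma_{i+1}}q^{\,d(\sigma_{0},\sigma_{1})+\cdots+d(\sigma_{k-1},\sigma_{k})},
\end{equation*}
which is precisely the expansion of $\#\mathcal{H}$ established in Proposition \ref{prop2.5}. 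Combining the three displays gives the claimed identity.

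I expect the only genuine subtlety to be the bookkeeping that legitimizes the $q$-weighted summation: one must confirm the boundedness of each graded piece $\mathbf{MC}_{\ast,l}(\mathcal{H})$ so that the Euler-characteristic identity is a genuine finite alternating sum, and verify that assembling these identities over all $l$ produces a coefficientwise finite power series that agrees term by term with $\#\mathcal{H}$. The remaining ingredients---invariance of the Euler characteristic under homology and the direct comparison with Proposition \ref{prop2.5}---are routine.
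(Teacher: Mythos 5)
Your proposal is correct and follows essentially the same route as the paper: invariance of the Euler characteristic under passage to homology, applied to each graded piece $\mathbf{MC}_{\ast,l}(\mathcal{H})$, followed by identification of the chain-level generating function with the expansion of $\#\mathcal{H}$ from Proposition \ref{prop2.5}. Your explicit check that $k\leq 2l$ (so each $\mathbf{MC}_{\ast,l}(\mathcal{H})$ is a bounded complex of finitely generated free abelian groups, making the Euler-characteristic identity and the coefficientwise summation legitimate) is a detail the paper leaves implicit rather than a departure from its argument.
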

\begin{proof}
The proof is parallel to the proof of \cite[Theorem 8]{hepworth2017categorifying}.
Let $\chi$ denote the ordinary Euler characteristic of chain complexes.
Noting that $\chi(M_{\ast})=\chi(H_{\ast}(M_{\ast}))$ for a chain complex $M_{\ast}$, we have
\begin{equation*}
  \begin{split}
    \sum\limits_{k,l\geq 0}(-1)^{k}\mathrm{rank}(\mathbf{MH}_{k,l}(\mathcal{H}))\cdot q^{l} =& \sum\limits_{l\geq 0}(-1)^{k}\chi(\mathbf{MH}_{\ast,l}(\mathcal{H}))\cdot q^{l} \\
      =& \sum\limits_{l\geq 0}(-1)^{k}\chi(\mathbf{MC}_{\ast,l}(\mathcal{H}))\cdot q^{l}\\
    =&\sum\limits_{k,l\geq 0}(-1)^{k}\mathrm{rank}(\mathbf{MC}_{k,l}(\mathcal{H}))\cdot q^{l}\\
     =&\sum\limits_{k\geq 0}(-1)^{k}\sum_{\sigma_{i}\neq \sigma_{i+1},i=1,\dots,k-1} q^{d(\sigma_0,\sigma_1)+\cdots+ d(\sigma_{k-1},\sigma_{k})}.
  \end{split}
\end{equation*}
By Proposition \ref{prop2.5}, we have the desired result.
\end{proof}
From the above discussion of magnitude homology, we can derive the following properties.
\begin{proposition}\label{prop:2.9}
Let $\mathcal{H}$ be a hypergraph. Then $\mathbf{MH}_{0,0}(\mathcal{H})$ is the free abelian group generated by the hyperedges of $\mathcal{H}$ and $\mathbf{MH}_{1,\frac{1}{2}}(\mathcal{H})$ is the free abelian group generated by the pairs $(\sigma, \tau)$ of hyperedges with $\sigma\subsetneq\tau$ or $\tau\subsetneq \sigma$.
\end{proposition}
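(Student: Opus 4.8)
The plan is to observe that each statement concerns a single homological degree on a single length-graded strand $\mathbf{MC}_{*,l}(\mathcal{H})$, and to show that in each case the adjacent chain groups vanish, so that the homology is simply the relevant chain group read off from Definition \ref{def:2.5}. The one structural input I would record first is that, since every elementary length $\ell(\sigma,\tau)$ of Definition \ref{definition:length} lies in $\{0,\tfrac12,1\}$, every path length and hence every value of $d$ lies in $\tfrac12\mathbb{Z}_{\geq 0}$; moreover, because $d$ is an extended metric, $d(\sigma,\tau)=0$ forces $\sigma=\tau$, so any two distinct hyperedges satisfy $d(\sigma,\tau)\geq \tfrac12$.

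For $\mathbf{MH}_{0,0}(\mathcal{H})$, first I would identify $\mathbf{MC}_{0,0}(\mathcal{H})$ as the free abelian group on the one-term tuples $(\sigma_0)$, i.e.\ on the hyperedges, since $\mathcal{L}(\sigma_0)=0$ automatically. Next I would show $\mathbf{MC}_{k,0}(\mathcal{H})=0$ for all $k\geq 1$: a generator $(\sigma_0,\dots,\sigma_k)$ with $\mathcal{L}=0$ would need $d(\sigma_{i},\sigma_{i+1})=0$ for each $i$, forcing $\sigma_i=\sigma_{i+1}$ and contradicting the distinctness of adjacent terms. Hence the strand $\mathbf{MC}_{*,0}(\mathcal{H})$ is concentrated in degree $0$, and $\mathbf{MH}_{0,0}(\mathcal{H})=\mathbf{MC}_{0,0}(\mathcal{H})$ is free on the hyperedges.

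For $\mathbf{MH}_{1,\frac12}(\mathcal{H})$, I would work with the three-term piece $\mathbf{MC}_{2,\frac12}\to \mathbf{MC}_{1,\frac12}\to \mathbf{MC}_{0,\frac12}$. The key step is to describe the generators of $\mathbf{MC}_{1,\frac12}(\mathcal{H})$: a pair $(\sigma,\tau)$ with $\sigma\neq\tau$ contributes iff $d(\sigma,\tau)=\frac12$, and I would prove that this holds precisely when $\sigma\subsetneq\tau$ or $\tau\subsetneq\sigma$. The containment direction is immediate, since then the one-step path $\sigma\tau$ has length $\ell(\sigma,\tau)=\frac12$; for the converse I would argue that a path realizing total length $\frac12$ can have only one nonzero elementary step (each being $\geq \frac12$), so after deleting repeated hyperedges it reduces to a single step $\sigma\tau$ with $\ell(\sigma,\tau)=\frac12$, which by Definition \ref{definition:length} means strict containment. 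Then I would note that the two neighbours vanish: $\mathbf{MC}_{0,\frac12}=0$ because one-term tuples have length $0\neq\frac12$, and $\mathbf{MC}_{2,\frac12}=0$ because a two-step chain of distinct adjacent hyperedges has length $\geq \frac12+\frac12=1>\frac12$. Consequently $\partial$ is zero on both sides and $\mathbf{MH}_{1,\frac12}(\mathcal{H})=\mathbf{MC}_{1,\frac12}(\mathcal{H})$, which is free on the strict-containment pairs.

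I expect the only genuine content to be the characterization $d(\sigma,\tau)=\frac12\iff$ strict containment; the rest is bookkeeping about which chain groups are forced to vanish. The mild subtlety there is that a minimizing path could a priori be long, so I must use that every nonzero step costs at least $\frac12$ to conclude that length $\frac12$ is attainable only by a single containment step --- exactly the sort of reduction already carried out in Lemma \ref{lemma:path1}.
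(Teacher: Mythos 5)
Your proof is correct and follows essentially the same route as the paper's: in both bidegrees the homology is identified with the chain group itself because the neighbouring chain groups, and hence all differentials, vanish. The paper's own proof is a terse two-sentence assertion of exactly this, so your extra work --- in particular the verification that $d(\sigma,\tau)=\tfrac12$ holds precisely for strict-containment pairs, using that every nonzero elementary step costs at least $\tfrac12$ and that the infimum is attained --- simply supplies details the paper leaves implicit.
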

\begin{proof}
When calculating $\mathbf{MH}_{0,0}(\mathcal{H})$ and $\mathbf{MH}_{1,\frac{1}{2}}(\mathcal{H})$, the chain complex $\mathbf{MC}_{0,0}(\mathcal{H})$ and $\mathbf{MC}_{1,\frac{1}{2}}(\mathcal{H})$ will be considered. Also because all differentials about the above chain complex are zero. The proof is completed.
\end{proof}

\section{The functorial properties}\label{section:functorial}
We have known that the magnitude homology of a graph is an object of the category of bigraded abelian groups. And the functorial properties of the magnitude were also introduced in \cite{hepworth2017categorifying}. Next, we will have a similar result on the hypergraph.

To construct the category of a hypergraph, we choose the following morphism whose objects are hypergraphs.
Let $\mathcal{G}$ and $\mathcal{H}$ be two hypergraphs, a map of hypergraphs $f:\mathcal{G}\rightarrow \mathcal{H}$ is a map of hyperedge sets $f:\sigma({\mathcal{G}})\rightarrow\sigma({\mathcal{H}})$ satisfying that $f(\sigma_i)\subseteq f(\sigma_j)$ whenever $\sigma_i\subseteq\sigma_j.$

Considering the distance of hyperedges, we have $d_\mathcal{H}(f(\sigma_i),f(\sigma_j))\leq d_\mathcal{G}(\sigma_i,\sigma_j)$ for all $\sigma_i,\sigma_j\in\sigma({\mathcal{G}}).$ If $f:\mathcal{G}\rightarrow \mathcal{H}$ is a map of hypergraphs, for any tuple $(\sigma_0,\dots,\sigma_k)$ of hyperedges of $\mathcal{G}$, we still have the inequality
\begin{equation}
\mathcal{L}(f(\sigma_0),\dots,f(\sigma_k))\leq \mathcal{L}(\sigma_0,\dots,\sigma_k) \label{(2)}.
\end{equation}

\begin{definition}{\label{Induced}}
If $f:\mathcal{G}\rightarrow \mathcal{H}$ is a map of hypergraphs, then we define the \emph{induced chain map} $f_{\#}:\mathbf{MC}_{*,*}(\mathcal{G})\rightarrow \mathbf{MC}_{*,*}(\mathcal{H}), $
which is defined on generators by
\[
f_{\#}(\sigma_0,\dots,\sigma_k)=\begin{cases}
(f(\sigma_0),\dots,f(\sigma_k)) & {\rm if} \ \mathcal{L}(f(\sigma_0),\dots,f(\sigma_k))=\mathcal{L}(\sigma_0,\dots,\sigma_k),\\
0& {\rm otherwise}.
\end{cases}
\]
\end{definition}

With the notion of Definition \ref{Induced}, we have the following proposition.
\begin{proposition}
The map $f_{\#}$ is a chain map.
\end{proposition}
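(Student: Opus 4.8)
The plan is to verify the defining chain-map identity $f_\# \circ \partial^{\mathcal{G}} = \partial^{\mathcal{H}} \circ f_\#$ on an arbitrary generator $(\sigma_0,\dots,\sigma_k)$ of $\mathbf{MC}_{k,l}(\mathcal{G})$, proceeding one length-grading $l$ at a time since both differentials preserve $l$. First I would observe that the only subtle point is how the face maps interact with the ``length-preservation'' condition that defines $f_\#$. By inequality \eqref{(2)} we always have $\mathcal{L}(f(\sigma_0),\dots,f(\sigma_k)) \leq \mathcal{L}(\sigma_0,\dots,\sigma_k) = l$, so $f_\#$ either sends the generator to the corresponding tuple of images (when equality holds) or to $0$ (when the inequality is strict).

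Next I would split the argument according to whether $\mathcal{L}(f(\sigma_0),\dots,f(\sigma_k)) = l$ or $<l$. In the equality case, applying $\partial^{\mathcal{H}}$ after $f_\#$ yields $\sum_i (-1)^i \partial_{i,l}(f(\sigma_0),\dots,f(\sigma_k))$, where the $i$-th term survives precisely when deleting $f(\sigma_i)$ keeps the length equal to $l$. Applying $f_\#$ after $\partial^{\mathcal{G}}$ yields $\sum_i (-1)^i f_\#(\sigma_0,\dots,\hat{\sigma_i},\dots,\sigma_k)$ restricted to those $i$ for which $\mathcal{L}(\sigma_0,\dots,\hat{\sigma_i},\dots,\sigma_k)=l$; the surviving $f_\#$-terms are then those whose image-length also equals $l$. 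The core of the proof is a term-by-term comparison showing these two collections of surviving indices coincide, which reduces to the monotonicity chain
\begin{equation*}
\mathcal{L}(f(\sigma_0),\dots,\widehat{f(\sigma_i)},\dots,f(\sigma_k)) \leq \mathcal{L}(\sigma_0,\dots,\hat{\sigma_i},\dots,\sigma_k) \leq \mathcal{L}(\sigma_0,\dots,\sigma_k)=l,
\end{equation*}
obtained from \eqref{(1)} and \eqref{(2)}. Using the sandwiching $\mathcal{L}(f(\sigma_0),\dots,f(\sigma_k))=l$ from the equality case, one forces both inequalities to be equalities exactly when the face-deleted image length is $l$, matching the two definitions index by index.

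In the strict-inequality case $f_\#(\sigma_0,\dots,\sigma_k)=0$, the left side $\partial^{\mathcal{H}} \circ f_\#$ vanishes trivially, so I would only need to check that $f_\# \circ \partial^{\mathcal{G}}$ also vanishes. Here each surviving face of $\partial^{\mathcal{G}}$ has $\mathcal{L}(\sigma_0,\dots,\hat{\sigma_i},\dots,\sigma_k)=l$, but its image length satisfies $\mathcal{L}(f(\sigma_0),\dots,\widehat{f(\sigma_i)},\dots,f(\sigma_k)) \leq \mathcal{L}(f(\sigma_0),\dots,f(\sigma_k)) < l$ by \eqref{(1)} applied in $\mathcal{H}$, so $f_\#$ kills every term and the composite is zero.

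I expect the main obstacle to be the careful bookkeeping in the equality case, specifically confirming that whenever a face survives in $\mathbf{MC}(\mathcal{G})$ with length $l$ \emph{and} its image has length $l$, the corresponding face survives in $\mathbf{MC}(\mathcal{H})$, and conversely that no extra $\mathcal{H}$-face appears without a matching $\mathcal{G}$-face. This hinges entirely on the two monotonicity inequalities \eqref{(1)} and \eqref{(2)} pinning the intermediate quantities between equal endpoints; everything else is formal. Since the differentials respect the $l$-grading and the sign conventions are identical on both sides, the signs $(-1)^i$ automatically agree, and assembling the two cases completes the verification that $f_\#$ is a chain map.
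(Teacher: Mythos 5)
Your proof is correct and takes essentially the same approach as the paper: both verify the identity generator by generator, face by face, using the monotonicity chain $\mathcal{L}(f(\sigma_0),\dots,\widehat{f(\sigma_i)},\dots,f(\sigma_k))\leq \mathcal{L}(\sigma_0,\dots,\widehat{\sigma_i},\dots,\sigma_k)\leq \mathcal{L}(\sigma_0,\dots,\sigma_k)=l$ from inequalities \eqref{(1)} and \eqref{(2)}, and concluding that a face index survives on one side exactly when it survives on the other. Your organization (first splitting on whether $f_{\#}$ kills the whole generator, then matching indices) is a slightly more explicit bookkeeping of the same two-case argument the paper runs for each $\partial_{i,l}$.
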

\begin{proof}
To show the map $f_{\#}$ is a chain map, it suffices to prove the diagram
$$
\xymatrix{
  \mathbf{MC}_{k,*}(\mathcal{G}) \ar[d]_{f_{\#}} \ar[r]^{\partial_l} & \mathbf{MC}_{k-1,*}(\mathcal{G}) \ar[d]^{f_{\#}} \\
  \mathbf{MC}_{k,*}(\mathcal{H}) \ar[r]^{\partial_l} & \mathbf{MC}_{k-1,*}(\mathcal{H})  }
$$
commutes. By definition, we have
$$
\mathcal{L}(f(\sigma_0),\dots ,\widehat{f(\sigma_i)} ,\dots,f(\sigma_k))\leq \mathcal{L}(\sigma_0,\dots,\widehat{\sigma_i},\dots,\sigma_k)\leq \mathcal{L}(\sigma_0,\dots,\sigma_k)=l.
$$
When $\mathcal{L}(f(\sigma_0),\dots ,\widehat{f(\sigma_i)} ,\dots,f(\sigma_k))< \mathcal{L}(\sigma_0,\dots,\sigma_k)=l$, both sides of the equation $f_{\#}\circ \partial_{i,l}=\partial_{i,l}\circ f_{\#}$ are zero. If
$$
\mathcal{L}(f(\sigma_0),\dots ,\widehat{f(\sigma_i)} ,\dots,f(\sigma_k))= \mathcal{L}(\sigma_0,\dots,\widehat{\sigma_i},\dots,\sigma_k)= \mathcal{L}(\sigma_0,\dots,\sigma_k)=l,
$$
we have $f_{\#}\circ \partial_{i,l}(\sigma_0,\dots,\sigma_k)=(f(\sigma_0),\dots ,\widehat{f(\sigma_i)} ,\dots,f(\sigma_k))=\partial_{i,l}\circ f_{\#}(\sigma_0,\dots,\sigma_k).$ The desired result is obtained.
\end{proof}
\begin{definition}\label{induced homology}
If $f:\mathcal{G}\rightarrow \mathcal{H}$ is a map of hypergraphs, then we define the homomorphisms
$f_{*}:\mathbf{MH}_{*,*}(\mathcal{G})\rightarrow \mathbf{MH}_{*,*}(\mathcal{H})$ as the induced homomorphisms of $f_{\#}$.
\end{definition}

\begin{proposition}
Let $\mathcal{G}$ be a hypergraph, we have $\mathcal{G}\mapsto \mathbf{MH}_{*,*}(\mathcal{G})$, $f\mapsto f_*$ is a functor, which is from the category of hypergraphs to the category of bigraded abelian groups.
\end{proposition}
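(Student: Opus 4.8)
The plan is to prove functoriality in two stages: first establish that $f\mapsto f_{\#}$ is a functor from the category of hypergraphs to the category of bigraded chain complexes, and then compose with the homology functor $H_{*,*}$, which is itself functorial from bigraded chain complexes to bigraded abelian groups. Since $f_{*}$ is by Definition \ref{induced homology} simply the map induced on homology by the chain map $f_{\#}$, once chain-level functoriality is in hand the statement follows formally. I first note that the morphisms really do form a category: the identity map of hyperedge sets preserves inclusions, and a composite of two inclusion-preserving maps is again inclusion-preserving, so identities and composites of hypergraph maps are themselves hypergraph maps.

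For the identity axiom, I would observe that when $f=\mathrm{id}_{\mathcal{G}}$ one has $\mathcal{L}(f(\sigma_0),\dots,f(\sigma_k))=\mathcal{L}(\sigma_0,\dots,\sigma_k)$ for every generator, so by Definition \ref{Induced} the map $(\mathrm{id}_{\mathcal{G}})_{\#}$ fixes every generator and hence equals $\mathrm{id}_{\mathbf{MC}_{*,*}(\mathcal{G})}$; passing to homology gives $(\mathrm{id}_{\mathcal{G}})_{*}=\mathrm{id}$.

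The substantive step is the composition axiom $(g\circ f)_{\#}=g_{\#}\circ f_{\#}$ for composable maps $f\colon\mathcal{G}\to\mathcal{H}$ and $g\colon\mathcal{H}\to\mathcal{K}$. Fixing a generator $(\sigma_0,\dots,\sigma_k)$ of $\mathbf{MC}_{*,*}(\mathcal{G})$, I would apply the length inequality \eqref{(2)} twice to obtain
\[
\mathcal{L}(gf(\sigma_0),\dots,gf(\sigma_k))\leq \mathcal{L}(f(\sigma_0),\dots,f(\sigma_k))\leq \mathcal{L}(\sigma_0,\dots,\sigma_k),
\]
and then split into cases according to where equality holds. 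If all three lengths coincide, then each of $f_{\#}$, $g_{\#}$ and $(g\circ f)_{\#}$ preserves length on the relevant tuple, so both sides equal $(gf(\sigma_0),\dots,gf(\sigma_k))$. If the leftmost length drops strictly below the rightmost, then either the middle length also drops, so $f_{\#}$ already annihilates the generator, or only the outer equality survives, so $f_{\#}(\sigma_0,\dots,\sigma_k)=(f(\sigma_0),\dots,f(\sigma_k))$ is a genuine generator but $g_{\#}$ annihilates it; in either situation $g_{\#}\circ f_{\#}$ vanishes, matching $(g\circ f)_{\#}=0$. A point I would make explicit here is that whenever $f_{\#}$ returns a nonzero tuple the distinctness of adjacent entries is automatic: collapsing $\sigma_i\neq\sigma_{i+1}$ to $f(\sigma_i)=f(\sigma_{i+1})$ would force $d(f(\sigma_i),f(\sigma_{i+1}))=0<d(\sigma_i,\sigma_{i+1})$ and hence strictly decrease the length, contradicting length preservation. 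This guarantees that the output of $f_{\#}$ is a legitimate generator and that the case analysis is exhaustive.

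The main obstacle is precisely this interplay between the truncation-to-zero in the definition of $f_{\#}$ and the composition: one must verify that the ``otherwise $0$'' branch is consistent on both sides of $(g\circ f)_{\#}=g_{\#}\circ f_{\#}$, which is why the double application of the monotonicity inequality \eqref{(2)} and the squeezing argument are essential rather than cosmetic. Once $(g\circ f)_{\#}=g_{\#}\circ f_{\#}$ and $(\mathrm{id})_{\#}=\mathrm{id}$ are established at the chain level, functoriality of homology yields $(g\circ f)_{*}=g_{*}\circ f_{*}$ and $(\mathrm{id})_{*}=\mathrm{id}$, completing the proof.
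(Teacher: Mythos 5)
Your proof is correct and follows essentially the route the paper itself indicates: the paper only asserts that the identity axiom is evident and that composition follows from Lemma \ref{lem:different} and the inequalities (\ref{(1)}) and (\ref{(2)}), leaving the details to the reader, and your chain-level case analysis of $(g\circ f)_{\#}=g_{\#}\circ f_{\#}$ followed by passage to homology is precisely the completion of that sketch. Your extra observation that length preservation forces $f(\sigma_i)\neq f(\sigma_{i+1})$, so that $f_{\#}$ genuinely lands on generators, is a worthwhile detail the paper glosses over.
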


It is evident that the identity map of a hypergraph induces the identity map in homology. For any maps of hypergraphs $f:\mathcal{G}\rightarrow \mathcal{H}$ and $g:\mathcal{H}\rightarrow \mathcal{K}$, by the lemma $\ref{lem:different}$ and the inequality ($\ref{(1)}$) and ($\ref{(2)}$), we can prove $g_*\circ f_*=(g\circ f)_*$. Here, the details of the proof are left to the reader.

Combining Proposition \ref{prop:2.9} and the induced map in homology, we have the following results, whose proof is immediate to obtain from the definitions.

\begin{proposition}
Let $f:\mathcal{G}\rightarrow \mathcal{H}$ be a map of hypergraphs.
\begin{itemize}
\item[$(i)$]If $f_*:\mathbf{MH}_{0,0}(\mathcal{G})\rightarrow \mathbf{MH}_{0,0}(\mathcal{H})$ , it maps a hyperedge $\sigma$ to $f(\sigma)$ in $\mathcal{H}$.
\item[$(ii)$]If $f_*:\mathbf{MH}_{1,\frac{1}{2}}(\mathcal{G})\rightarrow \mathbf{MH}_{1,\frac{1}{2}}(\mathcal{H})$, it maps the pairs $(\sigma,\tau)$ with $\sigma\subsetneq\tau$ or $\tau\subsetneq \sigma$ to $(f(\sigma),f(\tau))$ satisfying $f(\sigma)\subsetneq f(\tau)$ or $f(\tau)\subsetneq f(\sigma)$, otherwise maps to 0.
\end{itemize}
\end{proposition}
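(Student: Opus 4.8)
The plan is to reduce everything to the chain level, exploiting the fact that in the two bidegrees under consideration the magnitude differentials vanish, so that $f_*$ coincides with $f_{\#}$ and one need only unwind Definition \ref{Induced}. First I would record the structural input: by (the proof of) Proposition \ref{prop:2.9}, every differential entering or leaving $\mathbf{MC}_{0,0}$ and $\mathbf{MC}_{1,\frac12}$ is zero, so
$$
\mathbf{MH}_{0,0}(\mathcal{G})=\mathbf{MC}_{0,0}(\mathcal{G}),\qquad \mathbf{MH}_{1,\frac12}(\mathcal{G})=\mathbf{MC}_{1,\frac12}(\mathcal{G}),
$$
and likewise for $\mathcal{H}$. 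Consequently $f_*$ in these bidegrees is literally $f_{\#}$, and Proposition \ref{prop:2.9} names the generators: hyperedges $(\sigma)$ in degree $(0,0)$, and pairs $(\sigma,\tau)$ with $\sigma\subsetneq\tau$ or $\tau\subsetneq\sigma$ in degree $(1,\frac12)$.

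For part $(i)$ the argument is a one-line check: a single-hyperedge tuple $(\sigma)$ has $\mathcal{L}(\sigma)=0=\mathcal{L}(f(\sigma))$ by the empty-sum convention, so the defining condition of $f_{\#}$ always holds and $f_{\#}(\sigma)=(f(\sigma))$; transporting through the identifications above yields $f_*(\sigma)=f(\sigma)$.

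For part $(ii)$ I would invoke the contraction estimate $d_{\mathcal{H}}(f(\sigma),f(\tau))\le d_{\mathcal{G}}(\sigma,\tau)=\tfrac12$ recorded before Definition \ref{Induced}. Since all distances lie in $\tfrac12\mathbb{Z}_{\ge 0}\cup\{\infty\}$, the image distance is either $\tfrac12$ or $0$. If it equals $\tfrac12$, then $\mathcal{L}(f(\sigma),f(\tau))=\mathcal{L}(\sigma,\tau)$, one of $f(\sigma),f(\tau)$ is a proper subset of the other, and $f_{\#}(\sigma,\tau)=(f(\sigma),f(\tau))$; if it equals $0$, positive-definiteness of the extended metric $d$ forces $f(\sigma)=f(\tau)$, the length condition fails, and $f_{\#}(\sigma,\tau)=0$. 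This is precisely the claimed case distinction.

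The only point requiring genuine attention — the nearest thing to an obstacle in an otherwise formal argument — is the dichotomy in $(ii)$: one must know that $d$ takes no value strictly between $0$ and $\tfrac12$, which is immediate from Definition \ref{definition:length} since the only edge weights are $0,\tfrac12,1$, and that $d(f(\sigma),f(\tau))=0$ holds exactly when $f(\sigma)=f(\tau)$, i.e.\ positive-definiteness. Once these two facts are in hand, everything else is bookkeeping with the definitions, which is why the statement follows immediately.
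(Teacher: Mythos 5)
Your proposal is correct and takes essentially the same approach the paper intends: the paper gives no written proof, stating only that the result is ``immediate'' from Proposition~\ref{prop:2.9} and the definition of the induced map, and your argument is exactly that --- identify $\mathbf{MH}_{0,0}$ and $\mathbf{MH}_{1,\frac{1}{2}}$ with the corresponding chain groups via Proposition~\ref{prop:2.9} (all relevant differentials vanish), then unwind Definition~\ref{Induced} on generators. The dichotomy you spell out in $(ii)$ --- $d_{\mathcal{H}}(f(\sigma),f(\tau))$ equals $\tfrac12$ (forcing a proper inclusion, since $f$ preserves inclusions) or $0$ (forcing $f(\sigma)=f(\tau)$ by positive definiteness, whence $f_{\#}$ gives $0$) --- is precisely the bookkeeping the paper leaves to the reader.
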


\begin{corollary}
Let $f:\mathcal{G}\rightarrow \mathcal{H}$ be a map of hypergraphs. If $f_*:\mathbf{MH}_{*,*}(\mathcal{G})\rightarrow \mathbf{MH}_{*,*}(\mathcal{H})$ is an isomorphism, then $f$ is an isomorphism of hypergraphs.
\end{corollary}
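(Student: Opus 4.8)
The plan is to recover the entire hypergraph structure of $\mathcal{G}$ and $\mathcal{H}$---the hyperedge sets together with their strict inclusion relations---from the two lowest pieces of magnitude homology, namely $\mathbf{MH}_{0,0}$ and $\mathbf{MH}_{1,\frac{1}{2}}$, for which Proposition \ref{prop:2.9} and the preceding proposition give completely explicit descriptions. Throughout I would use the elementary fact that the linearization $\mathbb{Z}[S]\to\mathbb{Z}[T]$ of a map of sets $g\colon S\to T$ is an isomorphism of free abelian groups if and only if $g$ is a bijection: injectivity of $\mathbb{Z}[g]$ forces $g$ injective (otherwise $e_{s}-e_{s'}$ would lie in the kernel), and surjectivity of $\mathbb{Z}[g]$ forces $g$ surjective (any $e_{t}$ with $t\notin g(S)$ cannot lie in the span of the distinct basis vectors $e_{g(s)}$).

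First I would treat the bidegree $(0,0)$. By Proposition \ref{prop:2.9} the groups $\mathbf{MH}_{0,0}(\mathcal{G})$ and $\mathbf{MH}_{0,0}(\mathcal{H})$ are free abelian on the hyperedges, and by the preceding proposition $f_{*}$ is exactly the linearization of the set map $f\colon\sigma(\mathcal{G})\to\sigma(\mathcal{H})$, sending the basis vector $\sigma$ to $f(\sigma)$. Since $f_{*}$ is assumed to be an isomorphism in this bidegree, the fact above shows that $f$ is a bijection on hyperedge sets. Combined with the defining property $\sigma\subseteq\tau\Rightarrow f(\sigma)\subseteq f(\tau)$ of a map of hypergraphs, injectivity of $f$ upgrades this to preservation of strict inclusions: $\sigma\subsetneq\tau\Rightarrow f(\sigma)\subsetneq f(\tau)$.

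Next I would exploit the bidegree $(1,\frac{1}{2})$ to obtain the reverse implication. By Proposition \ref{prop:2.9} the groups $\mathbf{MH}_{1,\frac{1}{2}}(\mathcal{G})$ and $\mathbf{MH}_{1,\frac{1}{2}}(\mathcal{H})$ are free abelian on the ordered pairs $(\sigma,\tau)$ with $\sigma\subsetneq\tau$ or $\tau\subsetneq\sigma$; because $f$ is a bijection preserving strict inclusions, $f_{*}$ never annihilates a generator and is precisely the linearization of the induced map on such pairs. As $f_{*}$ is an isomorphism in this bidegree, that induced map is a bijection, and in particular surjective. Now suppose $f(\sigma)\subsetneq f(\tau)$ for hyperedges $\sigma,\tau$ of $\mathcal{G}$. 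Then $(f(\sigma),f(\tau))$ is a strict-inclusion pair of $\mathcal{H}$, so by surjectivity it equals $(f(\sigma'),f(\tau'))$ for some strict-inclusion pair $(\sigma',\tau')$ of $\mathcal{G}$; injectivity of $f$ gives $\sigma'=\sigma$ and $\tau'=\tau$, so $(\sigma,\tau)$ is itself a strict-inclusion pair. It cannot happen that $\tau\subsetneq\sigma$, since that would force $f(\tau)\subsetneq f(\sigma)$, contradicting $f(\sigma)\subsetneq f(\tau)$; hence $\sigma\subsetneq\tau$. This establishes the implication $f(\sigma)\subsetneq f(\tau)\Rightarrow\sigma\subsetneq\tau$.

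Finally, the two implications together with bijectivity give $\sigma\subseteq\tau\iff f(\sigma)\subseteq f(\tau)$, so the inverse bijection $f^{-1}$ is again a map of hypergraphs; therefore $f$ is an isomorphism of hypergraphs. The step I would treat most carefully---and which I expect to be the main subtlety---is the passage through $\mathbf{MH}_{1,\frac{1}{2}}$: one must first invoke the degree-$(0,0)$ conclusion to guarantee that $f_{*}$ sends generators to generators, so that the \emph{otherwise $0$} branch in the description of $f_{*}$ never occurs and the homology isomorphism genuinely translates into a bijection of strict-inclusion pairs, from which the reverse inclusion is extracted using surjectivity alone.
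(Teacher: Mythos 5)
Your proof is correct and takes essentially the same route the paper intends: the corollary is stated there without proof as an immediate consequence of Proposition \ref{prop:2.9} and the proposition describing $f_*$ in bidegrees $(0,0)$ and $(1,\frac{1}{2})$, and your argument is exactly that consequence spelled out (bijectivity on hyperedges from $\mathbf{MH}_{0,0}$, then recovery of the inclusion relation from $\mathbf{MH}_{1,\frac{1}{2}}$). Your extra care in using injectivity to rule out the ``otherwise $0$'' branch, so that $f_*$ in bidegree $(1,\frac{1}{2})$ really is a linearized set map, is precisely the detail the paper leaves implicit.
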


Next, we'll prove the additivity of magnitude homology concerning disjoint unions.
\begin{proposition}\label{disjoint}
Given hypergraphs $\mathcal{G}$ and $\mathcal{H}$, the inclusion maps are denoted by $i:\mathcal{G}\rightarrow \mathcal{G}\sqcup \mathcal{H}$ and $j:\mathcal{H}\rightarrow \mathcal{G}\sqcup \mathcal{H}$ respectively, then the induced map
$$
i_*\oplus j_*:\mathbf{MH}_{*,*}(\mathcal{G})\oplus \mathbf{MH}_{*,*}(\mathcal{H})\rightarrow \mathbf{MH}_{*,*}(\mathcal{G}\sqcup\mathcal{H})
$$
is an isomorphism.
\end{proposition}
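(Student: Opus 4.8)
The plan is to reduce the statement to one at the level of chain complexes: I will show that $\mathbf{MC}_{*,*}(\mathcal{G}\sqcup\mathcal{H})$ splits as the direct sum $\mathbf{MC}_{*,*}(\mathcal{G})\oplus\mathbf{MC}_{*,*}(\mathcal{H})$, and that $i_{\#}\oplus j_{\#}$ realizes precisely this splitting. Passing to homology then delivers the claimed isomorphism, since homology commutes with finite direct sums of chain complexes.

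First I would record the key geometric fact about the disjoint union. A hyperedge $\sigma\in\mathcal{G}$ and a hyperedge $\tau\in\mathcal{H}$ live in disjoint vertex sets, so $\sigma\cap\tau=\emptyset$, and more generally no hyperedge of $\mathcal{G}$ is ever adjacent to one of $\mathcal{H}$. Hence there is no path joining $\sigma\in\mathcal{G}$ to $\tau\in\mathcal{H}$, giving $d_{\mathcal{G}\sqcup\mathcal{H}}(\sigma,\tau)=\infty$. Conversely, for $\sigma,\tau\in\mathcal{G}$ any path in $\mathcal{G}\sqcup\mathcal{H}$ from $\sigma$ to $\tau$ must avoid every hyperedge of $\mathcal{H}$, since none of them intersects a hyperedge reachable from $\sigma$; such a path is therefore already a path in $\mathcal{G}$, and taking the infimum over paths yields $d_{\mathcal{G}\sqcup\mathcal{H}}(\sigma,\tau)=d_{\mathcal{G}}(\sigma,\tau)$, with the symmetric statement for $\mathcal{H}$.

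Using this I would argue that every generator $(\sigma_0,\dots,\sigma_k)$ of $\mathbf{MC}_{k,l}(\mathcal{G}\sqcup\mathcal{H})$ is supported entirely in $\mathcal{G}$ or entirely in $\mathcal{H}$: its length $l$ is finite, so each $d(\sigma_i,\sigma_{i+1})$ is finite, whence by the first observation consecutive hyperedges never straddle the two pieces, and connectedness through consecutive adjacencies confines the whole tuple to one piece. Since distances inside $\mathcal{G}$ (resp. $\mathcal{H}$) are unchanged, the length of such a tuple computed in $\mathcal{G}\sqcup\mathcal{H}$ equals the length computed in $\mathcal{G}$ (resp. $\mathcal{H}$). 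This gives a bigraded isomorphism $\mathbf{MC}_{*,*}(\mathcal{G}\sqcup\mathcal{H})\cong\mathbf{MC}_{*,*}(\mathcal{G})\oplus\mathbf{MC}_{*,*}(\mathcal{H})$, and by Definition \ref{Induced} the maps $i_{\#}$ and $j_{\#}$ send each generator to the corresponding tuple in $\mathcal{G}\sqcup\mathcal{H}$ (the length condition is automatic, as distances are preserved), so they are exactly the inclusions of the two summands.

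Finally I would check compatibility with the differential. The boundary $\partial_l$ only deletes entries of a tuple, so applied to a tuple supported in $\mathcal{G}$ it produces tuples again supported in $\mathcal{G}$, and the length-$l$ condition deciding which faces survive is identical whether computed in $\mathcal{G}$ or in $\mathcal{G}\sqcup\mathcal{H}$, because the relevant distances agree. Thus the splitting is one of chain complexes, $i_{\#}\oplus j_{\#}$ is an isomorphism of chain complexes, and taking homology shows $i_{*}\oplus j_{*}$ is an isomorphism. The only point needing genuine care is the second distance observation, namely that a shortest path between two hyperedges of $\mathcal{G}$ never needs to leave $\mathcal{G}$; but this is immediate once one notes that a hyperedge of $\mathcal{H}$ shares no vertex with any hyperedge of $\mathcal{G}$ and so cannot occur in any path starting in $\mathcal{G}$.
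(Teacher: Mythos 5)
Your proof is correct and follows essentially the same route as the paper's: a generator of $\mathbf{MC}_{k,l}(\mathcal{G}\sqcup\mathcal{H})$ has finite length, so consecutive hyperedges are at finite distance and the whole tuple lies in one component, whence the chain complex splits and $i_{\#}\oplus j_{\#}$ is an isomorphism of chain complexes. Your write-up is in fact more careful than the paper's, which leaves implicit both the preservation of distances under inclusion (needed so that $i_{\#}$, $j_{\#}$ send generators to generators of the same bidegree) and the compatibility of the splitting with the differential.
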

\begin{proof}
Suppose that $(\sigma_0,\dots,\sigma_k)$ is a generator of $\mathbf{MC}_{k,l}(\mathcal{G}\sqcup \mathcal{H})$, satisfying $\mathcal{L}(\sigma_0,\dots,\sigma_k)=l$. If the condition $\mathcal{L}(\sigma_0,\dots,\sigma_k)=l$ holds, it means $d(\sigma_{i-1},\sigma_i)< \infty$, $1\leq i\leq k$. Therefore, there is no path between $\mathcal{G}$ and $\mathcal{H}$, that is, $(\sigma_0,\dots,\sigma_k)$ all belong to $\mathcal{G}$ or $\mathcal{H}$. Then, $i_{\#} \bigoplus j_{\#}$ is an isomorphism. The result is completed.
\end{proof}
\begin{corollary}
Let $\mathcal{G}$ and $\mathcal{H}$ be hypergraphs. Then $\#(\mathcal{G}\sqcup \mathcal{H})=\#\mathcal{G}+\#\mathcal{H}$.
\end{corollary}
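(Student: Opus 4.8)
The plan is to deduce this additivity directly from the homological statement just proved in Proposition~\ref{disjoint}, feeding it through the Euler-characteristic formula of Theorem~\ref{Eluer}. First I would record that, since $i_*\oplus j_*$ is an isomorphism of bigraded abelian groups, for every pair $(k,l)$ there is a decomposition
$$
\mathbf{MH}_{k,l}(\mathcal{G}\sqcup\mathcal{H})\cong \mathbf{MH}_{k,l}(\mathcal{G})\oplus \mathbf{MH}_{k,l}(\mathcal{H}).
$$
Because $\mathcal{G}$ and $\mathcal{H}$ are finite, each magnitude chain group $\mathbf{MC}_{k,l}$ is free of finite rank, hence every $\mathbf{MH}_{k,l}$ is finitely generated and rank is additive over direct sums, giving
$$
\mathrm{rank}\,\mathbf{MH}_{k,l}(\mathcal{G}\sqcup\mathcal{H})=\mathrm{rank}\,\mathbf{MH}_{k,l}(\mathcal{G})+\mathrm{rank}\,\mathbf{MH}_{k,l}(\mathcal{H}).
$$

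Then I would apply Theorem~\ref{Eluer} to all three hypergraphs. Substituting the rank identity above into $\sum_{k,l\geq 0}(-1)^k\,\mathrm{rank}\,\mathbf{MH}_{k,l}(\mathcal{G}\sqcup\mathcal{H})\,q^l$ and splitting the sum term by term separates it into the generating functions of $\mathcal{G}$ and of $\mathcal{H}$, which by Theorem~\ref{Eluer} are exactly $\#\mathcal{G}$ and $\#\mathcal{H}$. This yields $\#(\mathcal{G}\sqcup\mathcal{H})=\#\mathcal{G}+\#\mathcal{H}$.

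I do not expect a genuine obstacle here, since the real content is carried entirely by Proposition~\ref{disjoint}; the only point needing a word of care is that the three identities are equalities in $\mathbb{Z}[\![\sqrt{q}]\!]$ graded by $l\in\tfrac12\mathbb{Z}$, so the rearrangement of the sum must be read degreewise in $l$ (each coefficient of $q^l$ is a finite signed sum of finite ranks), even though the sum over the homological index $k$ need not terminate. As an independent sanity check that bypasses homology, one can argue directly from the weighting: since no path joins a hyperedge of $\mathcal{G}$ to one of $\mathcal{H}$, the matrix $Z_{\mathcal{G}\sqcup\mathcal{H}}(q)$ is block diagonal with diagonal blocks $Z_{\mathcal{G}}(q)$ and $Z_{\mathcal{H}}(q)$; its inverse is then block diagonal, and summing all entries of the inverse gives $\sum(Z_{\mathcal{G}}(q))^{-1}+\sum(Z_{\mathcal{H}}(q))^{-1}=\#\mathcal{G}+\#\mathcal{H}$, in agreement with the homological computation.
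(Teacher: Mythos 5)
Your main argument is essentially identical to the paper's proof: it invokes Proposition~\ref{disjoint} to split $\mathbf{MH}_{k,l}(\mathcal{G}\sqcup\mathcal{H})$ degreewise, then feeds the resulting additivity of ranks (equivalently, of the Euler characteristics $\chi(\mathbf{MH}_{*,l})$) into Theorem~\ref{Eluer}, exactly as the paper does, and your remark about reading the identity coefficientwise in $q^{l}$ is a correct precaution the paper leaves implicit. The supplementary block-diagonal argument for $Z_{\mathcal{G}\sqcup\mathcal{H}}(q)$ is a valid and more elementary route not recorded in the paper, but since you present it only as a sanity check, the core approach coincides with the paper's.
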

\begin{proof}
By Theorem \ref{Eluer}, we know $\sum\limits_{n,l\geq 0}(-1)^{n}\mathrm{rank}(\mathbf{MH}_{n,l}(\mathcal{G}))\cdot q^{l}=\sum\limits_{l\geq 0}\chi(\mathbf{MH}_{*,l}(\mathcal{G}))\cdot q^{l}=\#\mathcal{G}.$ Similarly, we also know $\#\mathcal{H}=\sum\limits_{l\geq 0}\chi(\mathbf{MH}_{*,l}(\mathcal{H}))\cdot q^{l}$. By Proposition \ref{disjoint}, we have $\chi(\mathbf{MH}_{*,l}(\mathcal{G})\sqcup \mathbf{MH}_{*,l}(\mathcal{H}))=\chi(\mathbf{MH}_{*,l}(\mathcal{G}))+\chi(\mathbf{MH}_{*,l}(\mathcal{H}))$. So, we have $\#(\mathcal{G}\sqcup \mathcal{H})=\#\mathcal{G}+\#\mathcal{H}$.
\end{proof}

\section{Simple magnitude homology and K\"{u}nneth theorem}\label{section:kunneth}

In this section, we will introduce the K\"{u}nneth theorem for simple magnitude homology of hypergraphs. The Cartesian product of hypergraphs has been extensively studied by various researchers since the 1960s, making it one of the most well-researched constructions in hypergraph theory \cite{imrich1967kartesisches,hellmuth2012survey,bretto2009cartesian,bretto2010factorization,bretto2006hypergraphs,winitz1981native,imrich1971schwache}.
While the Cartesian product of hypergraphs is a valuable tool in various mathematical areas, it does not possess the K\"unneth theorem for magnitude homology of hypergraphs. To prove our theorem, we will introduce magnitude simplicial sets, which can be considered as a realization of the simple magnitude chain complexes in the form of simplicial sets.
\subsection{Simple magnitude homology}
In Section \ref{section:magnitude}, we introduce the magnitude chain complex of hypergraphs by utilizing tuples that consist of hyperedges. However, directly computing this magnitude is a complex task. To address this issue, we present a simplified version which is called the simple magnitude homology, which is better suited for computational purposes. Moreover, the simple magnitude homology of hypergraphs can be viewed as a generalization of the magnitude homology of graphs. Notably, the simple magnitude homology also exhibits a K\"{u}nneth formula, further enhancing its utility.

Let $\mathcal{H}=(V,E)$ be a hypergraph. We consider the tuples of the form $(v_{0},\dots,v_{k})$ with $v_{0},v_{1},\dots,v_{k}\in V$ such that $v_{i-1}\neq v_{i}$ for $i=1,2,\dots,k$. We can also define the length of the tuple $(v_{0},\dots,v_{k})$ by
\begin{equation*}
  \mathcal{L}(v_{0},v_{1},\dots,v_{k})=d(v_{0},v_{1})+\cdots+d(v_{k-1},v_{k}).
\end{equation*}
Here, $d(v,w)$ denotes the intercrossing distance from $\{v\}$ to $\{w\}$ on the hypergraph $\mathcal{H}$. Note that we do not require that $\{v\},\{w\}$ are hyperedges in $\mathcal{H}$ here. Let $\bar{\mathcal{H}}$ be a hyperedge obtained by including all the vertices as $0$-hyperedges.
And a path from $\{v\}$ to $\{w\}$ can be regarded as path on $\bar{\mathcal{H}}$. This idea is based on the fact that the addition of vertices does not change the intercrossing distance.

Now, let $A$ be an abelian group. Let $\mathrm{MC}_{k,l}(\mathcal{H};A)$ be a free abelian group over $A$ generated by all such tuples. Similarly, we have a {\em simple maginitude chain complex} $\mathrm{MC}_{\ast,\ast}(\mathcal{H};A)$ with the differential $\partial_{l}:\mathrm{MC}_{k,l}(\mathcal{H};A)\to \mathrm{MC}_{k-1,l}(\mathcal{H};A)$ is given by $\partial_{l}=\sum\limits_{i=0}^{k}(-1)^i\partial_{i,l}$ with
\begin{equation*}
  \partial_{i,l}(v_{0},\dots,v_{k})=\left\{
                                              \begin{array}{ll}
                                                (v_{0},\dots,\hat{v_{i}},\dots,v_{k}) & \hbox{$\mathcal{L}(v_{0},\dots,\hat{v_{i}},\dots,v_{k})=l$} , \\
                                                0 & \hbox{otherwise.}
                                              \end{array}
                                            \right.
\end{equation*}
Then the {\em simple magnitude homology} is defined by
\begin{equation*}
  \mathrm{MH}_{k,l}(\mathcal{H};A)=H_{k,l}(\mathrm{MC}_{\ast,\ast}(\mathcal{H};A)),\quad k,l\geq 0.
\end{equation*}

Recall that a graph can be regarded as a hypergraph with the hyperedges given by the edges from the original graph.
It is worth noting that the simple magnitude homology of hypergraphs coincides with the magnitude homology of graphs when $\mathcal{H}=(V,E)$ is a graph.

\begin{example}
(\textbf{The simple magnitude homology of a hypergraph})
Example \ref{em:2.6} continued, we calculate the simple magnitude homology of the given hypergraph. Likewise, we have two significant variables, $k$ and $l$. Furthermore, we only consider the values of $k$ up to 2.
\begin{itemize}
  \item [(i)] When $k=0$, $l=0$, we have $(\{0\})$, $(\{1\})$,  $(\{2\})$.
  \item [(ii)] When $k=1$, $l$ can be $1$ and $2$.
  \begin{itemize}
   \item[(a)]$l=1$, tuples are $(\{0\}, \{1\})$, $(\{1\}, \{2\})$, $(\{1\}, \{0\})$, $( \{2\},\{1\})$.
  \item[(b)]$l=2$, we get $(\{0\}, \{2\})$, $(\{2\}, \{0\})$.
  \end{itemize}
  \item [(iii)] When $k=2$, $l$ is $2$, $3$ and $4$.
  \begin{itemize}
   \item[(a)]$l=2$, tuples are  $(\{0\}, \{1\}, \{0\})$, $(\{0\}, \{1\}, \{2\})$, $(\{1\}, \{2\}, \{1\})$, $(\{1\}, \{0\},$ $\{1\})$, $( \{2\},\{1\}, \{0\})$, $( \{2\},\{1\}, \{2\})$.
  \item[(b)]$l=3$, we have $(\{0\}, \{2\}, \{1\})$, $(\{1\}, \{2\}, \{0\})$, $(\{1\}, \{0\}, \{2\})$, $(\{2\}, \{0\}, \{1\})$.
  \item[(c)]$l=4$, we obtain $(\{0\}, \{2\}, \{0\})$, $(\{2\}, \{0\}, \{2\})$.
  \end{itemize}
\end{itemize}

Now, we consider the simple magnitude chain complex for $l=0, 1, 2$. The simple chain complexes are given by
$$
\cdots\longrightarrow \mathrm{MC}_{2,0}(\mathcal{H})\stackrel{\partial_0}\longrightarrow \mathrm{MC}_{1,0}(\mathcal{H})\stackrel{\partial_0}\longrightarrow \mathrm{MC}_{0,0}(\mathcal{H})\longrightarrow 0,
$$
$$
\cdots\longrightarrow \mathrm{MC}_{2,1}(\mathcal{H})\stackrel{\partial_1}\longrightarrow \mathrm{MC}_{1,1}(\mathcal{H})\stackrel{\partial_1}\longrightarrow \mathrm{MC}_{0,1}(\mathcal{H})\longrightarrow 0,
$$
and
$$
\cdots\longrightarrow \mathrm{MC}_{2,2}(\mathcal{H})\stackrel{\partial_2}\longrightarrow \mathrm{MC}_{1,2}(\mathcal{H})\stackrel{\partial_2}\longrightarrow \mathrm{MC}_{0,2}(\mathcal{H})\longrightarrow 0.
$$
Using a computation method similar to that in Example \ref{em:2.6}, we can determine the magnitude homology $\mathrm{MH}_{0,0}(\mathcal{H})=\mathbb{Z}\oplus \mathbb{Z} \oplus \mathbb{Z}$, $\mathrm{MH}_{k,0}(\mathcal{H})=0$ for $k\neq 0$, $\mathrm{MH}_{1,1}(\mathcal{H})=\mathbb{Z}\oplus \mathbb{Z} \oplus \mathbb{Z}\oplus \mathbb{Z}$, $\mathrm{MH}_{1,2}(\mathcal{H})= 0$, and so on.
\end{example}

From now on, our primary focus is to investigate the K\"{u}nneth formula for the simple magnitude homology of hypergraphs. Let $\mathcal{G}=(V_1,E_1)$ and $\mathcal{H}=(V_2,E_2)$ be hypergraphs. The \emph{Cartesian product} $\mathcal{G}\Box \mathcal{H}$ of hypergraphs has set of vertices $V_1\times V_2$ and set of hyperedges $E_1\Box E_2=\{\{x\}\times \tau |x\in V_1 ,\tau\in E_2\}\bigcup \{\sigma\times \{y\}|y\in V_2 ,\sigma\in E_1 \} $. The Cartesian product $\Box$ is associative and commutative up to isomorphism.

\begin{lemma}\label{lemma:prelemma}
Let $\mathcal{G}=(V_1,E_1)$ and $\mathcal{H}=(V_2,E_2)$ be hypergraphs.  Let $\sigma\times \tau,\sigma'\times \tau'\in E_1\Box E_2$. Suppose $\sigma\times \tau,\sigma'\times \tau'$ has a nonempty intersection. Then we have
\begin{equation*}
    \ell_{\mathcal{G}\Box \mathcal{H}}(\sigma\times \tau,\sigma'\times \tau')=\ell_{\mathcal{G}}(\sigma,\sigma')+\ell_{\mathcal{H}}(\tau,\tau').
\end{equation*}
\end{lemma}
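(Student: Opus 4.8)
The plan is to reduce the identity to the containment relations between the factors, and then to make essential use of the defining feature of Cartesian-product hyperedges: each such hyperedge has a coordinate that is a singleton. First I would record two elementary facts about products of nonempty sets. Since $(\sigma\times\tau)\cap(\sigma'\times\tau')=(\sigma\cap\sigma')\times(\tau\cap\tau')$, the hypothesis that $\sigma\times\tau$ and $\sigma'\times\tau'$ meet forces $\sigma\cap\sigma'\neq\emptyset$ and $\tau\cap\tau'\neq\emptyset$, so that $\ell_{\mathcal{G}}(\sigma,\sigma')$ and $\ell_{\mathcal{H}}(\tau,\tau')$ are both defined. Moreover $\sigma\times\tau\subseteq\sigma'\times\tau'$ holds if and only if $\sigma\subseteq\sigma'$ and $\tau\subseteq\tau'$, with equality of the products if and only if $\sigma=\sigma'$ and $\tau=\tau'$.

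Using the second fact I would translate the three possible values of $\ell_{\mathcal{G}\Box\mathcal{H}}(\sigma\times\tau,\sigma'\times\tau')$ prescribed by Definition \ref{definition:length} into coordinatewise relations: the value is $0$ exactly when $\sigma=\sigma'$ and $\tau=\tau'$; it is $\tfrac12$ exactly when one product is a proper subset of the other, i.e. when $(\sigma,\sigma')$ and $(\tau,\tau')$ are comparable in the same direction with at least one containment strict; and it is $1$ in all remaining cases. On the other side, each coordinate pair falls under one of four relations — equal, $\subsetneq$, $\supsetneq$, or incomparable — contributing $0,\tfrac12,\tfrac12,1$ respectively to $\ell_{\mathcal{G}}(\sigma,\sigma')$ or $\ell_{\mathcal{H}}(\tau,\tau')$. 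The goal is then to match $\ell_{\mathcal{G}\Box\mathcal{H}}$ against the sum of these contributions.

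The decisive step is the singleton constraint. Under the nonempty-intersection hypothesis, a strict containment $\sigma\subsetneq\sigma'$ forces $|\sigma'|\geq 2$, a strict containment $\sigma\supsetneq\sigma'$ forces $|\sigma|\geq 2$, and incomparability of $\sigma$ and $\sigma'$ forces both $|\sigma|\geq 2$ and $|\sigma'|\geq 2$ (a singleton that meets a set is contained in it); the analogous statements hold for $\tau,\tau'$. Because $\sigma\times\tau$ and $\sigma'\times\tau'$ each have a singleton coordinate, these size bounds rule out every coordinate-relation pair in which two factors of $\sigma\times\tau$ or of $\sigma'\times\tau'$ would have to exceed size one; concretely, the surviving configurations are exactly those in which at least one of the two coordinate pairs is an equality, together with the two opposite strict pairs $(\subsetneq,\supsetneq)$ and $(\supsetneq,\subsetneq)$. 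For each of these admissible configurations I would check directly that $\ell_{\mathcal{G}\Box\mathcal{H}}$ equals $\ell_{\mathcal{G}}(\sigma,\sigma')+\ell_{\mathcal{H}}(\tau,\tau')$: an equality paired with a strict containment gives $\tfrac12$ on both sides, an equality paired with an incomparable pair gives $1=0+1$, and each opposite strict pair gives $1=\tfrac12+\tfrac12$, while the all-equal case gives $0$.

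The main obstacle is recognizing that the singleton structure is indispensable rather than cosmetic, since the identity simply fails without it. Were $\sigma\subsetneq\sigma'$ and $\tau\subsetneq\tau'$ allowed to occur simultaneously, the left-hand side would read $\tfrac12$ while the right-hand side would read $1$; and $\sigma\subsetneq\sigma'$ together with an incomparable pair $\tau,\tau'$ would push the right-hand side to $\tfrac32$ against a left-hand side of $1$. The heart of the argument is therefore the verification that these additivity-breaking configurations are precisely the ones excluded because a Cartesian-product hyperedge cannot have two coordinates of size at least two. Once this exclusion is in place, the remaining verification is a finite and entirely routine case check.
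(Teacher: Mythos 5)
Your proof is correct and follows essentially the same route as the paper's: both arguments reduce to a finite case check over containment relations, with the decisive point being that each hyperedge of $\mathcal{G}\Box\mathcal{H}$ has a singleton coordinate, which excludes exactly the additivity-breaking configurations (e.g.\ $\sigma\subsetneq\sigma'$ together with $\tau\subsetneq\tau'$, or a strict containment paired with an incomparable pair). The only difference is organizational --- the paper splits first on whether one product hyperedge contains the other, while you enumerate the coordinate-relation pairs directly --- but the excluded and verified cases coincide.
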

\begin{proof}
$(i)$ When $\sigma\times \tau\subseteq\sigma'\times \tau'$ or $\sigma\times \tau\subseteq\sigma'\times \tau'$. We only prove the case $\sigma\times \tau\subseteq\sigma'\times \tau'$. It follows that $\sigma\subseteq \sigma'$ and $\tau\subseteq\tau'$. If $\sigma= \sigma'$ and $\tau=\tau'$, it is trivial. If $\sigma= \sigma'$ and $\tau\subsetneq\tau'$, one has $\ell_{\mathcal{G}\Box \mathcal{H}}(\sigma\times \tau,\sigma'\times \tau')=1/2=\ell_{\mathcal{G}}(\sigma,\sigma')+\ell_{\mathcal{H}}(\tau,\tau')$. Similarly, if $\sigma\subsetneq \sigma'$ and $\tau=\tau'$, we also have the desired equality. It is impossible that $\sigma\subsetneq \sigma'$ and $\tau\subsetneq\tau'$ since $\sigma\times \tau,\sigma'\times \tau'\in E_1\Box E_2$.

$(ii)$ When $\sigma\times \tau\not\subseteq\sigma'\times \tau'$ and $\sigma'\times \tau'\not\subseteq\sigma\times \tau$. Since  $\sigma\times \tau\in E_1\Box E_2$, at least one of $\sigma$ and $\tau$ is a $0$-hyperedge. We only consider the case $\sigma$ is a $0$-hyperedge. If $\sigma'$ is a $0$-hyperedge, we have $\tau\not\subseteq\tau'$ and $\tau'\not\subseteq\tau$. It follows that
\begin{equation*}
    \ell_{\mathcal{G}\Box \mathcal{H}}(\sigma\times \tau,\sigma'\times \tau')=1=\ell_{\mathcal{G}}(\sigma,\sigma')+\ell_{\mathcal{H}}(\tau,\tau').
\end{equation*}
If $\sigma'$ is not a $0$-hyperedge, then $\tau'$ is a $0$-hyperedge. Moreover, we have $\sigma\subsetneq \sigma'$ and $\tau'\subsetneq\tau$. Thus one has
\begin{equation*}
    \ell_{\mathcal{G}\Box \mathcal{H}}(\sigma\times \tau,\sigma'\times \tau')=1=\ell_{\mathcal{G}}(\sigma,\sigma')+\ell_{\mathcal{H}}(\tau,\tau').
\end{equation*}
The case that $\tau$ is a $0$-hyperedge is similar. The proof is completed.
\end{proof}

\begin{lemma}\label{lemma:product_equal}
Let $\mathcal{G}=(V_1,E_1)$ and $\mathcal{H}=(V_2,E_2)$ be hypergraphs. For $v_{1},v_{2}\in V_{1}$ and $w_{1},w_{2}\in V_{2}$, we have
\begin{equation*}
    d_{\mathcal{G}\Box \mathcal{H}}(v_{1}\times w_{1},v_{2}\times w_{2})=d_{\mathcal{G}}( v_{1}, v_{2})+d_{\mathcal{H}}( w_{1}, w_{2}).
\end{equation*}
\end{lemma}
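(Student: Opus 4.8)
The plan is to prove the two inequalities $d_{\mathcal{G}\Box\mathcal{H}}(v_{1}\times w_{1},v_{2}\times w_{2})\leq d_{\mathcal{G}}(v_{1},v_{2})+d_{\mathcal{H}}(w_{1},w_{2})$ and the reverse separately, in both directions exploiting the additive splitting of a single step supplied by Lemma \ref{lemma:prelemma}. The whole argument takes place in the augmented hypergraph $\overline{\mathcal{G}\Box\mathcal{H}}$ (all vertices adjoined as $0$-hyperedges), in which $v_{i}\times w_{i}$ are $0$-hyperedges and along whose paths the distances $d_{\mathcal{G}},d_{\mathcal{H}},d_{\mathcal{G}\Box\mathcal{H}}$ are computed. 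First I would record the structural observation that every hyperedge of $\overline{\mathcal{G}\Box\mathcal{H}}$ has the product form $\alpha\times\beta$ with $\alpha$ a hyperedge of $\bar{\mathcal{G}}$ and $\beta$ a hyperedge of $\bar{\mathcal{H}}$: the product hyperedges $\{x\}\times\tau$ and $\sigma\times\{y\}$, together with the adjoined vertices $\{x\}\times\{y\}$, are all of this type. I would then note that the splitting $\ell_{\mathcal{G}\Box\mathcal{H}}(\alpha\times\beta,\alpha'\times\beta')=\ell_{\bar{\mathcal{G}}}(\alpha,\alpha')+\ell_{\bar{\mathcal{H}}}(\beta,\beta')$ of Lemma \ref{lemma:prelemma} persists for such elements, since the case analysis there only uses that at least one factor of each product is a $0$-hyperedge, which continues to hold for the adjoined vertices (for which both factors are $0$-hyperedges).

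For the upper bound I would take a path $\gamma_{1}=\sigma_{0}\cdots\sigma_{m}$ in $\bar{\mathcal{G}}$ from $\{v_{1}\}$ to $\{v_{2}\}$ and a path $\gamma_{2}=\tau_{0}\cdots\tau_{n}$ in $\bar{\mathcal{H}}$ from $\{w_{1}\}$ to $\{w_{2}\}$, which may be chosen to realize the two distances by Proposition \ref{proposition:path2}, and concatenate them one coordinate at a time:
\begin{equation*}
\sigma_{0}\times\tau_{0},\ \sigma_{1}\times\tau_{0},\ \dots,\ \sigma_{m}\times\tau_{0},\ \sigma_{m}\times\tau_{1},\ \dots,\ \sigma_{m}\times\tau_{n}.
\end{equation*}
Because the fixed factor $\tau_{0}=\{w_{1}\}$ (resp. $\sigma_{m}=\{v_{2}\}$) is a $0$-hyperedge, each term is a legitimate hyperedge of $\overline{\mathcal{G}\Box\mathcal{H}}$, and consecutive terms intersect since the corresponding step of $\gamma_{1}$ or $\gamma_{2}$ does. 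Applying the splitting formula termwise, together with $\ell(\sigma,\sigma)=\ell(\tau,\tau)=0$, the length of this product path collapses to $\ell(\gamma_{1})+\ell(\gamma_{2})=d_{\mathcal{G}}(v_{1},v_{2})+d_{\mathcal{H}}(w_{1},w_{2})$, which yields the inequality $\leq$.

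For the lower bound I would start from an arbitrary path $\rho_{0}\cdots\rho_{N}$ in $\overline{\mathcal{G}\Box\mathcal{H}}$ from $\{v_{1}\}\times\{w_{1}\}$ to $\{v_{2}\}\times\{w_{2}\}$, write $\rho_{t}=\alpha_{t}\times\beta_{t}$, and project onto each factor. Since $\rho_{t}\cap\rho_{t+1}=(\alpha_{t}\cap\alpha_{t+1})\times(\beta_{t}\cap\beta_{t+1})$ is nonempty, both factors are nonempty, so the projected sequences $\alpha_{0}\cdots\alpha_{N}$ and $\beta_{0}\cdots\beta_{N}$ are genuine paths (stationary repeats merely contribute length $0$), running from $\{v_{1}\}$ to $\{v_{2}\}$ in $\bar{\mathcal{G}}$ and from $\{w_{1}\}$ to $\{w_{2}\}$ in $\bar{\mathcal{H}}$. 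The splitting formula then gives $\ell(\rho)=\sum_{t}\ell_{\bar{\mathcal{G}}}(\alpha_{t},\alpha_{t+1})+\sum_{t}\ell_{\bar{\mathcal{H}}}(\beta_{t},\beta_{t+1})\geq d_{\mathcal{G}}(v_{1},v_{2})+d_{\mathcal{H}}(w_{1},w_{2})$, and taking the infimum over $\rho$ produces the inequality $\geq$; combining the two directions finishes the proof.

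I expect the main obstacle to lie in the bookkeeping of the lower-bound projection: confirming that the projected sequences remain admissible paths (correct endpoints and nonempty consecutive intersections) even when many product steps hold one coordinate fixed, and that the exact additivity of Lemma \ref{lemma:prelemma} genuinely extends once $0$-hyperedge vertices are permitted as intermediate terms of a path. Once that extension is in place, the remainder is a direct coordinatewise application of the length splitting.
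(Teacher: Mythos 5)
Your proposal is correct and follows essentially the same route as the paper's proof: the upper bound via the same L-shaped concatenation $\sigma_0\times\tau_0,\dots,\sigma_m\times\tau_0,\sigma_m\times\tau_1,\dots,\sigma_m\times\tau_n$, and the lower bound by projecting a product path to the two factors and applying the length splitting of Lemma \ref{lemma:prelemma}. Your explicit extension of that lemma to products involving adjoined $0$-hyperedge vertices (and your use of an arbitrary path plus infimum, rather than a geodesic from Proposition \ref{proposition:path2}) is a slightly more careful packaging of steps the paper performs implicitly or by hand for the endpoint terms.
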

\begin{proof}
$(i)$ By Proposition \ref{proposition:path2}, there is a path $\gamma=\{v_{1}\times w_{1}\}(\sigma_{1}\times \tau_{1})\cdots(\sigma_{k-1}\times \tau_{k-1})\{v_{2}\times w_{2}\}$ from $\{v_{1}\times w_{1}\}$ to $\{v_{2}\times w_{2}\}$ of length $\ell(\gamma)=d(v_{1}\times w_{1},v_{2}\times w_{2})$ and height $k=\delta(v_{1}\times w_{1},v_{2}\times w_{2})$ for hyperedges $\sigma_{1}\times \tau_{1},\dots,\sigma_{k-1}\times \tau_{k-1}\in E_{1}\Box E_{2}$. It is worth noting that the hyperedges $\sigma_{1}\times \tau_{1},\dots,\sigma_{k-1}\times \tau_{k-1}$ are of dimensional $\geq 1$.

Since $\{v_{1}\times w_{1}\}\cap(\sigma_{1}\times \tau_{1})\neq \emptyset$, we have $\{v_{1}\times w_{1}\}\subsetneq(\sigma_{1}\times \tau_{1})$. It follows that
\begin{equation*}
    \ell_{\mathcal{G}\Box \mathcal{H}}(\{v_{1}\times w_{1}\},\sigma_1\times \tau_1)=\frac{1}{2}.
\end{equation*}
Recall that one of $\sigma_{1},\tau_{1}$ is a $0$-hyperedge. So we have
\begin{equation*}
   \ell_{\mathcal{G}}(\{v_{1}\},\sigma_{1})+\ell_{\mathcal{H}}(\{w_{1}\},\tau_{1})= \ell_{\mathcal{G}\Box \mathcal{H}}(\{v_{1}\times w_{1}\},\sigma_1\times \tau_1).
\end{equation*}
Similarly, one has
\begin{equation*}
   \ell_{\mathcal{G}}(\sigma_{k-1},\{v_{2}\})+\ell_{\mathcal{H}}(\tau_{k-1},\{w_{2}\})= \ell_{\mathcal{G}\Box \mathcal{H}}(\sigma_{k-1}\times \tau_{k-1},\{v_{2}\times w_{2}\}).
\end{equation*}
Note that $\gamma_{1}=\{v_{1}\}\sigma_{1}\cdots\sigma_{k-1}\{v_{2}\}$ is a path from $\{v_{1}\}$ to $\{v_{2}\}$ and $\gamma_{2}=\{w_{1}\}\tau_{1}\cdots\tau_{k-1}\{w_{2}\}$ is a path from $\{w_1\}$ to $\{w_2\}$.
By definition and Lemma \ref{lemma:prelemma}, we have
\begin{equation*}
\begin{split}
    &\ell(\gamma)\\
    =&\ell_{\mathcal{G}\Box \mathcal{H}}(\{v_1\times w_1\},\sigma_{1}\times \tau_1)+\ell_{\mathcal{G}\Box \mathcal{H}}(\sigma_{k-1}\times \tau_{k-1},\{v_2\times w_2\})+\sum\limits_{j=1}^{k-2}\ell_{\mathcal{G}\Box \mathcal{H}}(\sigma_{j}\times \tau_{j},\sigma_{j+1}\times \tau_{j+1})\\
    =&\ell_{\mathcal{G}}(\{v_1\},\sigma_{1})+\ell_{\mathcal{G}}(\sigma_{k-1},\{v_2\})+\sum\limits_{j=1}^{k-2}\ell_{\mathcal{G}}(\sigma_{j},\sigma_{j+1})+\ell_{ \mathcal{H}}(\{w_1\},\tau_{1})+\ell_{\mathcal{H}}( \tau_{k-1}, \{w_2\})+\sum\limits_{j=1}^{k-2}\ell_{\mathcal{H}}(\tau_{j},\tau_{j+1})\\
    =&\ell_{\mathcal{G}}(\gamma_{1})+\ell_{\mathcal{H}}(\gamma_{2})\\
\end{split}
\end{equation*}
It follows that
\begin{equation*}
   d_{\mathcal{G}\Box \mathcal{H}}(v_{1}\times w_{1},v_{2}\times w_{2})=\ell(\gamma)= \ell_{\mathcal{G}}(\gamma_{1})+\ell_{\mathcal{H}}(\gamma_{2})\geq d_{\mathcal{G}}(v_{1}, v_{2})+d_{\mathcal{H}}( w_{1},w_{2}).
\end{equation*}

$(ii)$ On the other hand, by Proposition \ref{proposition:path}, suppose $\gamma_{1}=\{v_{1}\}\sigma_{1}\cdots\sigma_{k-1}\{v_{2}\}$ is a path from $\{v_{1}\}$ to $\{v_{2}\}$ of length $\ell(\gamma_{1})=d_{\mathcal{G}}( v_1, v_2)$ and height $k=\delta_{\mathcal{G}}( v_1, v_2 )$. Similarly, suppose $\gamma_{2}=\{w_1\}\tau_{1}\cdots\tau_{m-1}\{w_2\}$ is a path from $\{w_1\}$ to $\{w_2\}$ of length $\ell(\gamma_{2})=d_{\mathcal{H}}( w_1, w_2 )$ and height $m=\delta_{\mathcal{H}}(w_1, w_2)$. Then we have a path
\begin{equation*}
  \gamma= \{v_1\times w_1\}(\sigma_{1}\times w_1)\cdots (\sigma_{k-1}\times w_1)(v_2\times w_1)(v_2\times \tau_{1})\cdots(v_2\times \tau_{m-1})\{v_2\times w_2\}
\end{equation*}
from $\{v_1\times w_1\}$ to $\{v_2\times w_2\}$ of height $k+m$. Moreover, we have
\begin{equation*}
\begin{split}
    \ell(\gamma)=&\ell_{\mathcal{G}\Box \mathcal{H}}(\{v_1\times w_1\},\sigma_{1}\times \{w_1\})+\ell_{\mathcal{G}\Box \mathcal{H}}(\sigma_{k-1}\times \{w_1\},\{v_2\times w_1\})+\sum\limits_{j=1}^{k-2}\ell_{\mathcal{G}\Box \mathcal{H}}(\sigma_{j}\times \{w_1\},\sigma_{j+1}\times \{w_1\})\\
    &+\ell_{\mathcal{G}\Box \mathcal{H}}(\{v_2\times w_1\},\{v_2\}\times \tau_{1})+\ell_{\mathcal{G}\Box \mathcal{H}}(\{v_2\}\times \tau_{m-1},\{v_2\times w_2\})+\sum\limits_{j=1}^{m-2}\ell_{\mathcal{G}\Box \mathcal{H}}(\{v_2\}\times \tau_{j},\{v_2\}\times \tau_{j+1})\\
    =&\ell_{\mathcal{G}}(\{v_1\},\sigma_{1})+\ell_{\mathcal{G}}(\sigma_{k-1},\{v_2\})+\sum\limits_{j=1}^{k-2}\ell_{\mathcal{G}}(\sigma_{j},\sigma_{j+1})+\ell_{ \mathcal{H}}(\{w_1\},\tau_{1})+\ell_{\mathcal{H}}( \tau_{m-1},\{ w_2\})+\sum\limits_{j=1}^{m-2}\ell_{\mathcal{H}}(\tau_{j},\tau_{j+1})\\
    =&\ell_{\mathcal{G}}(\gamma_{1})+\ell_{\mathcal{H}}(\gamma_{2})\\
    =&d_{\mathcal{G}}( v_1, v_2)+d_{\mathcal{H}}( w_1, w_2).
\end{split}
\end{equation*}
Hence, we have
\begin{equation*}
   d_{{\mathcal{G}\Box \mathcal{H}}}(v_{1}\times w_{1},v_{2}\times w_{2})\leq
   \ell(\gamma)=d_{\mathcal{G}}(v_1, v_2)+d_{\mathcal{H}}( w_1, w_2).
\end{equation*}
This completes the proof.
\end{proof}

\begin{definition}(\textbf{The magnitude simplicial set $M_l(\mathcal{G})$})
Let $\mathcal{G}$ be a hypergraph. Then we have a based simplicial set $(M_l(\mathcal{G}),\ast)$ whose $k$-simplices are the $(k+1)$-tuples $(v_{0},v_{1},\dots,v_{k})$ with the length $l\geq 0$, which has $i$-th face map
\begin{equation*}
 d_i(v_{0},v_{1},\dots,v_{k})=\left\{
                                              \begin{array}{ll}
                                                (v_{0},\dots,v_{i-1},v_{i+1},\dots,v_{k}) & \hbox{$\mathcal{L}(v_{0},v_{1},\dots,v_{i-1},v_{i+1},\dots,v_{k})=l$,} \\
                                                \ast & \hbox{otherwise},
                                              \end{array}
                                            \right.
\end{equation*}
and the $i$-th degeneracy
\begin{equation*}
 s_i(v_{0},v_{1},\dots,v_{k})=\left\{
                                              \begin{array}{ll}
                                                (v_{0},\dots,v_{i-1},v_{i},v_{i},v_{i+1},\dots,v_{k}) & \hbox{$\mathcal{L}(v_{0},v_{1},\dots,v_{i-1},v_{i},v_{i},v_{i+1},\dots,v_{k})=l$,} \\
                                                \ast & \hbox{otherwise.}
                                              \end{array}
                                            \right.
\end{equation*}
\end{definition}

\begin{proposition}\label{proposition:simplicial}
Let $\mathcal{G},\mathcal{H}$ be hypergraphs. For $l\geq 0$, the map of pointed simplicial sets
    \begin{equation*}
       \Box:\underset{p+q=l}{\vee}M_{p}(\mathcal{G})\wedge M_{q}(\mathcal{H})\rightarrow M_l(\mathcal{G}\Box \mathcal{H})
    \end{equation*}
defined by $(v_1,\dots,v_k)\Box (w_1,\dots,w_k)=(v_1\times w_1,\dots,v_k\times w_k)$ is an isomorphism.
\end{proposition}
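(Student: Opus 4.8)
The plan is to verify that $\Box$ is a well-defined map of pointed simplicial sets and that it is a bijection on $k$-simplices for every $k$; since an isomorphism of simplicial sets is precisely a simplicial map that is bijective in each degree, this suffices. The entire argument rests on the length-additivity furnished by Lemma \ref{lemma:product_equal}: applying it to each consecutive pair and summing gives
\[
\mathcal{L}_{\mathcal{G}\Box\mathcal{H}}(v_0\times w_0,\dots,v_k\times w_k)=\mathcal{L}_{\mathcal{G}}(v_0,\dots,v_k)+\mathcal{L}_{\mathcal{H}}(w_0,\dots,w_k).
\]
Consequently, if $(v_0,\dots,v_k)$ has length $p$ in $\mathcal{G}$ and $(w_0,\dots,w_k)$ has length $q$ in $\mathcal{H}$ with $p+q=l$, then its image has length $l$ in $\mathcal{G}\Box\mathcal{H}$, so $\Box$ genuinely lands in $M_l(\mathcal{G}\Box\mathcal{H})$, and the basepoint is sent to the basepoint.

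First I would establish bijectivity on $k$-simplices. Since the vertex set of $\mathcal{G}\Box\mathcal{H}$ is $V_1\times V_2$, every $(k+1)$-tuple $(u_0,\dots,u_k)$ of its vertices factors uniquely as $u_i=v_i\times w_i$ with $v_i\in V_1$ and $w_i\in V_2$, giving a bijection between $(V_1\times V_2)^{k+1}$ and $V_1^{k+1}\times V_2^{k+1}$. Unravelling the wedge and the smash, a non-basepoint $k$-simplex of the left-hand side is exactly a pair of non-basepoint simplices $(v_0,\dots,v_k)$ and $(w_0,\dots,w_k)$ lying in some $M_p(\mathcal{G})\wedge M_q(\mathcal{H})$, i.e. with $\mathcal{L}_{\mathcal{G}}=p$ and $\mathcal{L}_{\mathcal{H}}=q$. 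The decomposition $p+q=l$ is forced by the tuples themselves, so distinct wedge summands contribute disjoint sets of simplices and nothing is counted twice. By the displayed identity, such pairs correspond precisely to tuples $(u_0,\dots,u_k)$ of length $l$, which are exactly the non-basepoint $k$-simplices of $M_l(\mathcal{G}\Box\mathcal{H})$; hence $\Box$ is a bijection in each degree.

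Next I would check that $\Box$ is a simplicial map, which is the delicate part. For the degeneracies this is immediate: inserting a repeated vertex contributes a distance $d(v,v)=0$ and so never changes the length, whence no degeneracy hits the basepoint on either side and $s_i$ plainly commutes with $\Box$. For the faces one must compare the ``collapse to $\ast$'' conditions. On the right, $d_i$ sends $(u_0,\dots,u_k)$ to the basepoint exactly when deleting $u_i$ strictly decreases $\mathcal{L}_{\mathcal{G}\Box\mathcal{H}}$; by the additivity identity together with the monotonicity of length under deletion from inequality (\ref{(1)}), this occurs iff deleting $v_i$ drops $\mathcal{L}_{\mathcal{G}}$ or deleting $w_i$ drops $\mathcal{L}_{\mathcal{H}}$. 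On the left, the smash-product face sends the pair to the basepoint precisely when $d_i$ kills the $\mathcal{G}$-factor or the $\mathcal{H}$-factor, which is the identical condition. When neither length drops, both sides produce the coordinatewise deletion, so the square commutes.

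The main obstacle is exactly this face-map bookkeeping: one must ensure that the basepoint identifications built into the smash and the wedge on the left match, term by term, the length-dropping faces that collapse to $\ast$ on the right. Once the additivity of Lemma \ref{lemma:product_equal} and the monotonicity in (\ref{(1)}) are in hand, this reduces to the elementary fact that a sum $p+q$ is preserved iff both summands are preserved, given that neither summand can increase under deletion. Combining bijectivity in each degree with this simplicial compatibility yields that $\Box$ is an isomorphism of pointed simplicial sets.
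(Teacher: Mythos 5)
Your proof is correct and follows the same route as the paper: both rest on Lemma \ref{lemma:product_equal} to get additivity of tuple lengths under the Cartesian product, and then conclude that $\Box$ is a well-defined simplicial map and a degreewise bijection. The paper compresses the rest into ``the isomorphism is obtained by a direct verification''; your write-up simply carries out that verification (degeneracies never collapse, and a face collapses to $\ast$ on one side iff it does on the other, since $p+q$ is preserved under deletion iff both summands are), which is exactly the intended argument.
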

\begin{proof}
By Lemma \ref{lemma:product_equal}, we have
\begin{equation*}
    d_{\mathcal{G}\Box \mathcal{H}}(v_1\times w_1,\dots,v_k\times w_k)=d_{\mathcal{G}}(v_1,\dots,v_k)+d_{\mathcal{H}}(w_1,\dots,w_k).
\end{equation*}
Note that the map is simplicial. Thus the map is well defined. The isomorphism is obtained by a direct verification.
\end{proof}

The simplicial set $M_l(\mathcal{G})$ provided us with a convenient way to deal with the magnitude complex. Recall that the normalized chain complex of a simplicial complex $K$ is the chain complex $C_{\ast}(K;A)$ quotient the degenerate part. More precisely, the normalized chain complex is given by
\begin{equation*}
  N_{\ast}(K;A)=C_{\ast}(K;A)/D_{\ast}(K;A),
\end{equation*}
where, $D_{\ast}(K;A)$ is the sub chain complex of $C_{\ast}(K;A)$ generated by the degenerate elements.
Let $K,L$ be two simplicial sets. We have the Eilenberg-Zilber map
\begin{equation*}
   EZ: N_{\ast}(K)\otimes N_{\ast}(L)\to N_{\ast}(K\times L)
\end{equation*}
given by
\begin{equation*}
EZ(\sigma\otimes \tau)= \sum_{(\mu,\nu)}(-1)^{n(\mu,\nu)}\cdot(x_{\mu(0)}\times y_{\nu(0)},\dots,x_{\mu(p+q)}\times y_{\nu(p+q)}),
\end{equation*}
for non-degenerate elements $\sigma=\{x_0,\dots,x_{p}\}\in K$ and $\tau=\{y_0,\dots,y_{q}\}\in L$, where $(\mu,\nu)$ is given by $\mu(0)=\nu(0)=0$, $\mu(p+q)=p$, $\nu(p+q)=q$ and either
\begin{equation*}
\left\{
  \begin{array}{ll}
    \mu(i+1)=\mu(i) \\
    \nu(i+1)=\nu(i)+1
  \end{array}
\right. \quad\text{or}\quad
\left\{
  \begin{array}{ll}
    \mu(i+1)=\mu(i)+1 \\
    \nu(i+1)=\nu(i)
  \end{array}
\right.
\end{equation*}
for $0\leq i\leq p+q-1$. Here, $n(\mu,\nu)=\sum\limits_{0\leq i<j\leq p+q}[\mu(i+1)-\mu(i)][\nu(j+1)-\mu(i)]$\cite{maclane2012homology}.

Now, let $X,Y$ be two pointed simplicial sets. Then we have the reduced normalized chain complexes $\bar{N}_{\ast}(X),\bar{N}_{\ast}(Y)$ of $X,Y$. The Eilenberg-Zilber map of reduced version
\begin{equation}\label{equation:isomorphism}
   \overline{EZ}: \bar{N}_{\ast}(X)\otimes \bar{N}_{\ast}(Y)\to \bar{N}_{\ast}(X\wedge Y)
\end{equation}
is an chain homotopy equivalence \cite{hepworth2017categorifying}.

Moreover, by observing the definition of magnitude complexes, we find that
the reduced normalized chain complex of the pointed simplicial set $M_l(\mathcal{H})$ coincides with the corresponding magnitude complex of $\mathcal{H}$.
\begin{lemma}\label{lemma:normalized}
Let $\mathcal{H}$ be a hypergraph. Then we have
    $\bar{N}_{\ast}(M_{l}(\mathcal{H}))=\mathrm{MC}_{\ast,l}(\mathcal{H})$.
\end{lemma}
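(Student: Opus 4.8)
The plan is to prove the equality of chain complexes by matching generators degreewise and then verifying that the two differentials agree term by term on those generators. First I would recall the structure of the reduced normalized chain complex: in degree $k$, the group $\bar{N}_k(M_l(\mathcal{H}))$ is free abelian on the non-degenerate $k$-simplices of $M_l(\mathcal{H})$ that differ from the basepoint $\ast$, and its differential is the alternating sum $\sum_{i=0}^{k}(-1)^i d_i$ of face maps, subject to the conventions that a degenerate face contributes $0$ (normalization) and a face equal to $\ast$ contributes $0$ (reduction).

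Next I would identify the non-degenerate simplices explicitly. A $k$-simplex $(v_0,\dots,v_k)$ lies in the image of some degeneracy $s_i$ exactly when it contains a consecutive repeat $v_i=v_{i+1}$, so the non-degenerate $k$-simplices of length $l$ are precisely the tuples with $\mathcal{L}(v_0,\dots,v_k)=l$ and $v_{i-1}\neq v_i$ for all $i$. This is exactly the defining condition for the generators of $\mathrm{MC}_{k,l}(\mathcal{H})$, which yields a canonical degreewise identification $\bar{N}_k(M_l(\mathcal{H}))=\mathrm{MC}_{k,l}(\mathcal{H})$ of abelian groups.

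It then remains to compare $d_i$ with $\partial_{i,l}$ for each $i$. For the boundary indices $i=0$ and $i=k$, deleting an endpoint strictly decreases the length, since $d(v_0,v_1)>0$ and $d(v_{k-1},v_k)>0$ by positive definiteness; hence both $d_i$ (which returns $\ast$, so $0$ after reduction) and $\partial_{i,l}$ (which returns $0$) vanish. For an interior index $1\le i\le k-1$, both maps return the face $(v_0,\dots,\hat{v_i},\dots,v_k)$ exactly when $\mathcal{L}(v_0,\dots,\hat{v_i},\dots,v_k)=l$, and return $\ast$ or $0$ otherwise, so after applying the conventions they coincide. Summing over $i$ gives $\partial=\partial_l$ and completes the identification of the chain complexes.

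The one point requiring care, and the main obstacle, is the interaction between normalization and length preservation in the interior case: I must check that a length-preserving interior face is automatically non-degenerate, so that it is not inadvertently killed by normalization and survives as a genuine generator of $\mathrm{MC}_{k-1,l}$. This follows from the metric axioms, for if deleting $v_i$ preserved the length but created a repeat $v_{i-1}=v_{i+1}$, then $d(v_{i-1},v_{i+1})=0$ while $d(v_{i-1},v_i)+d(v_i,v_{i+1})>0$, contradicting $\mathcal{L}(v_0,\dots,\hat{v_i},\dots,v_k)=l=\mathcal{L}(v_0,\dots,v_k)$. Thus normalization never destroys a length-preserving interior face, the two differential formulas match, and the asserted equality holds.
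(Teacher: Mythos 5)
Your proof is correct. The paper in fact gives no proof of this lemma at all---it is asserted as something one sees ``by observing the definition of magnitude complexes''---so your argument is exactly the verification that this observation requires: identifying the non-degenerate simplices of $M_l(\mathcal{H})$ away from the basepoint with the generators of $\mathrm{MC}_{k,l}(\mathcal{H})$, noting that the two endpoint faces always strictly drop the length (by positive definiteness of the intercrossing distance) and hence vanish on both sides, and matching the interior faces under the length-preservation condition. The one genuinely subtle point, which you correctly isolate and settle, is that a length-preserving interior face can never create a consecutive repeat $v_{i-1}=v_{i+1}$ (else $d(v_{i-1},v_i)+d(v_i,v_{i+1})$ would have to equal $d(v_{i-1},v_{i+1})=0$), so normalization in $\bar{N}_{\ast}(M_l(\mathcal{H}))$ never kills a face that $\partial_{i,l}$ retains; this is precisely the detail the paper's unproved assertion glosses over.
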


\subsection{The proof of K\"{u}nneth theorem for magnitude homology}
Now, we will show the K\"{u}nneth theorem for magnitude homology of hypergraphs with respect to the product of hypergraphs introduced in the last subsection. We give the defintion of the exterior product first.

\begin{definition}(\textbf{The exterior product})
Let $\mathcal{G}$ and $\mathcal{H}$ be hypergraphs.
We can define \emph{the exterior product} $\Box:\mathrm{MC}_{*,*}(\mathcal{G})\otimes \mathrm{MC}_{*,*}(\mathcal{H})\rightarrow \mathrm{MC}_{*,*}(\mathcal{G}\Box \mathcal{H})$ as follows.
For $k_1, k_2\geq 0$ and $l_1, l_2\geq 0$, the map
\begin{equation*}
\Box:\mathrm{MC}_{k_1,l_1}(\mathcal{G})\otimes \mathrm{MC}_{k_2,l_2}(\mathcal{H})\rightarrow \mathrm{MC}_{k_1+k_2,l_1+l_2}(\mathcal{G}\Box  \mathcal{H})
\end{equation*}
is defined by
\begin{equation*}
(v_0,\dots,v_{k_1})\square(w_0,\dots,w_{k_2})= \sum_{(\mu,\nu)}(-1)^{n(\mu,\nu)}\cdot(v_{\mu(0)}\Box w_{\nu(0)},\dots,v_{\mu(k_1+k_2)}\Box w_{\nu(k_1+k_2)}),
\end{equation*}
where $(\mu,\nu)$ is given by $\mu(0)=\nu(0)=0$, $\mu(k_1+k_2)=k_{1}$, $\nu(k_1+k_2)=k_{2}$ and either
\begin{equation*}
\left\{
  \begin{array}{ll}
    \mu(i+1)=\mu(i) \\
    \nu(i+1)=\nu(i)+1
  \end{array}
\right. \quad\text{or}\quad
\left\{
  \begin{array}{ll}
    \mu(i+1)=\mu(i)+1 \\
    \nu(i+1)=\nu(i)
  \end{array}
\right.
\end{equation*}
for $0\leq i\leq k_1+k_2-1$. Here, $n(\mu,\nu)=\sum\limits_{0\leq i<j\leq k_{1}+k_{2}}[\mu(i+1)-\mu(i)][\nu(j+1)-\mu(i)]$.
\end{definition}

\begin{theorem}\emph{(\cite[Theorem 3B.5]{hatcher2002algebraic})}\label{thm3}
Let $R$ be a principal ideal domain, and let $C,C'$ be chain complexes of free $R$-modules. Then there is a natural exact sequence
\begin{equation*}
  0\rightarrow \bigoplus_{p+q=n}H_{p}(C)\otimes H_{q}(C')\rightarrow H_{n}(C\otimes C')\rightarrow  \bigoplus_{p+q=n}\mathrm{Tor}_{R}(H_{p}(C),H_{q-1}(C'))\rightarrow 0.
\end{equation*}
\end{theorem}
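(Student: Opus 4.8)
The plan is to run the classical homological-algebra argument: manufacture a short exact sequence of chain complexes, tensor it with $C'$, and read off the result from the associated long exact homology sequence. Write $Z_p=\ker(\partial\colon C_p\to C_{p-1})$ and $B_p=\mathrm{im}(\partial\colon C_{p+1}\to C_p)$ for the cycles and boundaries of $C$. The essential structural input is the PID hypothesis: since $R$ is a principal ideal domain and each $C_p$ is free, every submodule of $C_p$ is again free, so all $Z_p$ and all $B_p$ are free $R$-modules. I regard the cycles $Z$ as a chain complex with zero differential, and I form the quotient complex $C/Z$, whose degree-$p$ term is $C_p/Z_p\cong B_{p-1}$ (also with zero differential, via $\partial$). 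The differential of $C$ then yields the short exact sequence of chain complexes
\[
0\to Z\to C\xrightarrow{\partial} C/Z\to 0 .
\]

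First I would note that because each term $B_{p-1}$ of $C/Z$ is free, this sequence splits in every degree, giving $C_p\cong Z_p\oplus B_{p-1}$ as $R$-modules (not as complexes). Tensoring a degreewise-split short exact sequence with $C'$ preserves exactness in each degree, so
\[
0\to Z\otimes C'\to C\otimes C'\to (C/Z)\otimes C'\to 0
\]
is again a short exact sequence of chain complexes, and its long exact homology sequence drives the proof. Since $Z$ and $C/Z$ carry zero differentials and are degreewise free, the functor $(-)\otimes C'$ is exact on them, so one computes $H_n(Z\otimes C')=\bigoplus_{p+q=n}Z_p\otimes H_q(C')$ and $H_n((C/Z)\otimes C')=\bigoplus_{p+q=n}B_{p-1}\otimes H_q(C')$.

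Next I would identify the connecting homomorphism $\delta\colon H_n((C/Z)\otimes C')\to H_{n-1}(Z\otimes C')$. Tracing the snake-lemma construction shows that $\delta$ is induced, summand by summand, by the inclusion $\iota\colon B_{p-1}\hookrightarrow Z_{p-1}$ of boundaries into cycles, tensored with the identity on $H_q(C')$. To compute its kernel and cokernel I apply $(-)\otimes H_q(C')$ to the defining short exact sequence
\[
0\to B_p\xrightarrow{\iota} Z_p\to H_p(C)\to 0 ;
\]
because $Z_p$ is free, $\mathrm{Tor}(Z_p,-)=0$, so the $\mathrm{Tor}$ long exact sequence collapses to
\[
0\to \mathrm{Tor}(H_p(C),H_q(C'))\to B_p\otimes H_q(C')\xrightarrow{\iota\otimes 1} Z_p\otimes H_q(C')\to H_p(C)\otimes H_q(C')\to 0 .
\]
Hence $\ker(\iota\otimes 1)=\mathrm{Tor}(H_p(C),H_q(C'))$ and $\mathrm{coker}(\iota\otimes 1)=H_p(C)\otimes H_q(C')$.

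Finally I would extract from the long exact sequence the short exact sequence $0\to\mathrm{coker}\,\delta_{n+1}\to H_n(C\otimes C')\to\ker\delta_n\to 0$, insert the kernel and cokernel just computed, and reindex the boundary grading to arrive at
\[
0\to\bigoplus_{p+q=n}H_p(C)\otimes H_q(C')\to H_n(C\otimes C')\to\bigoplus_{p+q=n}\mathrm{Tor}(H_p(C),H_{q-1}(C'))\to 0 .
\]
Naturality is automatic, since the initial sequence $0\to Z\to C\to C/Z\to 0$, its tensor product with $C'$, the long exact sequence, and the identification of $\delta$ with $\iota\otimes 1$ are all natural in $C$ and $C'$. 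I expect the main obstacle to be exactly the two places where the PID hypothesis is indispensable: the freeness of $Z_p$ and $B_p$ is what makes the sequence split degreewise (so tensoring with $C'$ stays exact) and what makes $\mathrm{Tor}(Z_p,-)$ vanish (so the $\mathrm{Tor}$ sequence truncates to pin down $\ker$ and $\mathrm{coker}$ of $\iota\otimes 1$). The remaining difficulty is purely the index bookkeeping in matching $\delta$ to $\iota\otimes 1$ and in the closing reindexing $q\mapsto q-1$.
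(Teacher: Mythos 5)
Your proposal is correct and is essentially the proof in the paper's cited source: the paper itself gives no argument for this statement, deferring to \cite[Theorem 3B.5]{hatcher2002algebraic}, and Hatcher's proof there is exactly your route --- the degreewise-split short exact sequence $0\to Z\to C\to C/Z\to 0$ of complexes with zero differentials (splitting from freeness of the boundary modules over the PID), tensoring with $C'$, and identifying the connecting homomorphism with $\iota\otimes 1$ via the truncated $\mathrm{Tor}$ sequence of $0\to B_p\to Z_p\to H_p(C)\to 0$. No gaps; the index bookkeeping you flag is handled correctly.
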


\begin{theorem}
\textbf{\emph{(The K\"{u}nneth theorem for simple magnitude homology of hypergraphs)}}\label{kunneth}
Let $\mathcal{G}$ and $\mathcal{H}$ be hypergraphs. By the exterior product, we have a natural short exact sequence
\begin{equation*}
\begin{split}
      0\rightarrow \bigoplus\limits_{p+q=n}\mathrm{MH}_{p,*}(\mathcal{G})\otimes \mathrm{MH}_{q,*}(\mathcal{H})&\stackrel{\square}{\rightarrow}\mathrm{MH}_{n,*}(\mathcal{G}\Box\mathcal{H})\\
      &\rightarrow \bigoplus\limits_{p+q=n}\mathrm{Tor}(\mathrm{MH}_{p,*}(\mathcal{G}),\mathrm{MH}_{q-1,*}(\mathcal{H}))\rightarrow 0.
\end{split}
\end{equation*}
\end{theorem}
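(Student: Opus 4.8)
The plan is to reduce the statement to the purely algebraic Künneth theorem (Theorem \ref{thm3}) over the ground ring $\mathbb{Z}$, which is a principal ideal domain, by exhibiting a natural chain homotopy equivalence between the simple magnitude chain complex of $\mathcal{G}\Box\mathcal{H}$ and the tensor product of the simple magnitude chain complexes of $\mathcal{G}$ and $\mathcal{H}$. Since every chain group $\mathrm{MC}_{k,l}(\cdot)$ is a free abelian group, once such an equivalence is produced the desired short exact sequence follows at once from Theorem \ref{thm3} by passing to homology.

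First I would fix a total length $l\geq 0$ and work with the single-length complex $\mathrm{MC}_{*,l}$; the two gradings are kept separate throughout, the homological degree being the one recorded by the subscript $n$ in the statement and the length being additive under the product. By Lemma \ref{lemma:normalized} I identify $\mathrm{MC}_{*,l}(\mathcal{G}\Box\mathcal{H})$ with the reduced normalized chain complex $\bar{N}_{*}(M_l(\mathcal{G}\Box\mathcal{H}))$ of the magnitude simplicial set. Next, Proposition \ref{proposition:simplicial} supplies an isomorphism of pointed simplicial sets $M_l(\mathcal{G}\Box\mathcal{H})\cong \bigvee_{p+q=l}M_p(\mathcal{G})\wedge M_q(\mathcal{H})$; since the reduced normalized chain functor carries wedges to direct sums, this yields
$$
\bar{N}_{*}(M_l(\mathcal{G}\Box\mathcal{H}))\cong \bigoplus_{p+q=l}\bar{N}_{*}\bigl(M_p(\mathcal{G})\wedge M_q(\mathcal{H})\bigr).
$$
I would then invoke the reduced Eilenberg--Zilber map \eqref{equation:isomorphism}, which is a chain homotopy equivalence, to obtain
$$
\bar{N}_{*}\bigl(M_p(\mathcal{G})\wedge M_q(\mathcal{H})\bigr)\simeq \bar{N}_{*}(M_p(\mathcal{G}))\otimes \bar{N}_{*}(M_q(\mathcal{H}))=\mathrm{MC}_{*,p}(\mathcal{G})\otimes\mathrm{MC}_{*,q}(\mathcal{H}),
$$
applying Lemma \ref{lemma:normalized} again on each factor. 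Chaining these identifications produces a chain homotopy equivalence $\mathrm{MC}_{*,l}(\mathcal{G}\Box\mathcal{H})\simeq \bigoplus_{p+q=l}\mathrm{MC}_{*,p}(\mathcal{G})\otimes\mathrm{MC}_{*,q}(\mathcal{H})$, so the two sides have the same homology in each degree.

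Finally I would apply Theorem \ref{thm3} to each summand $\mathrm{MC}_{*,p}(\mathcal{G})\otimes\mathrm{MC}_{*,q}(\mathcal{H})$ of free $\mathbb{Z}$-module chain complexes, identify the homology groups via $H_{a}(\mathrm{MC}_{*,p}(\mathcal{G}))=\mathrm{MH}_{a,p}(\mathcal{G})$ and similarly for $\mathcal{H}$, and then sum over $p+q=l$; collecting the length gradings into the single symbol $*$ yields precisely the asserted short exact sequence. I expect the main obstacle to be the bookkeeping verification that the chain-level exterior product $\Box$ of the preceding definition coincides, under the identifications of Lemma \ref{lemma:normalized} and Proposition \ref{proposition:simplicial}, with the Eilenberg--Zilber map \eqref{equation:isomorphism}, so that the arrow labelled $\square$ in the sequence really is the one induced by the defined exterior product. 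This is plausible since the defining formula for $\Box$ is verbatim the Eilenberg--Zilber formula transported through the simplicial isomorphism, but one must check that the shuffle conventions $\mu,\nu$ and the sign $n(\mu,\nu)$ match and that the whole construction is natural in both $\mathcal{G}$ and $\mathcal{H}$, so that the sequence inherits its naturality from Theorem \ref{thm3}.
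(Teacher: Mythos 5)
Your proposal is correct and follows essentially the same route as the paper's own proof: identify $\mathrm{MC}_{*,l}$ with reduced normalized chains of the magnitude simplicial sets (Lemma \ref{lemma:normalized}), decompose $M_l(\mathcal{G}\Box\mathcal{H})$ via Proposition \ref{proposition:simplicial}, apply the reduced Eilenberg--Zilber equivalence \eqref{equation:isomorphism}, and then invoke Theorem \ref{thm3}. The bookkeeping point you flag---that the composite quasi-isomorphism agrees with the defined exterior product $\Box$---is exactly the step the paper asserts with the phrase ``which is exactly the exterior product,'' so your treatment is, if anything, slightly more careful on that point.
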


\begin{proof}
By Eq. \ref{equation:isomorphism} and Proposition \ref{proposition:simplicial}, we have
\begin{equation*}
  \begin{split}
\bigoplus\limits_{l_1+l_2=l} \bar{N}_{\ast} (M_{l_1}(\mathcal{G}))\otimes \bar{N}_{\ast}(M_{l_2}(\mathcal{H}))&\stackrel{\overline{EZ}}{\longrightarrow}\bigoplus\limits_{l_1+l_2=l} \bar{N}_{\ast}(M_{l_1}(\mathcal{G})\wedge M_{l_2}(\mathcal{H}))\\
&\stackrel{=}\longrightarrow\bar{N}_{\ast}(\bigvee\limits_{l_1+l_2=l}M_{l_1}(\mathcal{G})\wedge M_{l_2}(\mathcal{H}))\\
      &\stackrel{\bar{N}(\Box)}\longrightarrow \bar{N}_{\ast}(M_l(\mathcal{G}\Box\mathcal{H})).
  \end{split}
\end{equation*}
By Lemma \ref{lemma:normalized}, the above quasi-isomorphism can be reduced to a quasi-isomorphism
\begin{equation*}
\bigoplus\limits_{l_1+l_2=l} \mathrm{MC}_{\ast,l_{1}} (\mathcal{G})\otimes  \mathrm{MC}_{\ast,l_{2}}(\mathcal{H})\stackrel{\simeq}{\longrightarrow}  \mathrm{MC}_{\ast,l}(\mathcal{G}\Box\mathcal{H}),
\end{equation*}
which is exactly the exterior product. Applying Theorem \ref{thm3} to the magnitude complex with respect to the first index of the magnitude complexes, we obtain  natural exact sequence
\begin{equation*}
\begin{split}
      0\rightarrow \bigoplus\limits_{\substack{p+q=n\\l_1+l_2=l}}\mathrm{MH}_{p,l_{1}}(\mathcal{G})\otimes \mathrm{MH}_{q,l_{2}}(\mathcal{H})&\stackrel{\square}{\rightarrow}\bigoplus\limits_{l_1+l_2=l} H_{n}(\mathrm{MC}_{\ast,l_{1}} (\mathcal{G})\otimes  \mathrm{MC}_{\ast,l_{2}}(\mathcal{H}))\\
      &\rightarrow \bigoplus\limits_{\substack{p+q=n\\l_1+l_2=l}}\mathrm{Tor}(\mathrm{MH}_{p,l_{1}}(\mathcal{G}),\mathrm{MH}_{q-1,l_{2}}(\mathcal{H}))\rightarrow 0.
\end{split}
\end{equation*}
Combining with the quasi-isomorphism before, we obtain the desired result.
\end{proof}

\medskip
\noindent {\bf Acknowledgement}.
The authors are supported by Natural Science Foundation of China (NSFC grant no. 11971144) and the start-up research fund from BIMSA.
%
%
%
%
%
\bibliographystyle{plain}
\bibliography{mag_refs}

\begin{thebibliography}{10}

\bibitem{aksoy2020hypernetwork}
Sinan~G Aksoy, Cliff Joslyn, Carlos~Ortiz Marrero, Brenda Praggastis, and
  Emilie Purvine.
\newblock Hypernetwork science via high-order hypergraph walks.
\newblock {\em EPJ Data Science}, 9(1):16, 2020.

\bibitem{alvarez2021evolutionary}
Unai Alvarez-Rodriguez, Federico Battiston, Guilherme~Ferraz de~Arruda, Yamir
  Moreno, Matja{\v{z}} Perc, and Vito Latora.
\newblock Evolutionary dynamics of higher-order interactions in social
  networks.
\newblock {\em Nature Human Behaviour}, 5(5):586--595, 2021.

\bibitem{asao2023magnitude}
Yasuhiko Asao.
\newblock Magnitude homology and path homology.
\newblock {\em Bulletin of the London Mathematical Society}, 55(1):375--398,
  2023.

\bibitem{asao2021girth}
Yasuhiko Asao, Yasuaki Hiraoka, and Shu Kanazawa.
\newblock Girth, magnitude homology, and phase transition of diagonality.
\newblock {\em arXiv preprint arXiv:2101.09044}, 2021.

\bibitem{asao2020geometric}
Yasuhiko Asao and Kengo Izumihara.
\newblock Geometric approach to graph magnitude homology.
\newblock {\em arXiv preprint arXiv:2003.08058}, 2020.

\bibitem{asao2021geometric}
Yasuhiko Asao and Kengo Izumihara.
\newblock Geometric approach to graph magnitude homology.
\newblock {\em Homology, Homotopy and Applications}, 23(1):297--310, 2021.

\bibitem{battiston2021physics}
Federico Battiston, Enrico Amico, Alain Barrat, Ginestra Bianconi, Guilherme
  Ferraz~de Arruda, Benedetta Franceschiello, Iacopo Iacopini, Sonia K{\'e}fi,
  Vito Latora, Yamir Moreno, et~al.
\newblock The physics of higher-order interactions in complex systems.
\newblock {\em Nature Physics}, 17(10):1093--1098, 2021.

\bibitem{battiston2020networks}
Federico Battiston, Giulia Cencetti, Iacopo Iacopini, Vito Latora, Maxime
  Lucas, Alice Patania, Jean-Gabriel Young, and Giovanni Petri.
\newblock Networks beyond pairwise interactions: structure and dynamics.
\newblock {\em Physics Reports}, 874:1--92, 2020.

\bibitem{bottinelli2021magnitude}
R{\'e}mi Bottinelli and Tom Kaiser.
\newblock Magnitude homology, diagonality, and median spaces.
\newblock {\em Homology, Homotopy and Applications}, 23(2):121--140, 2021.

\bibitem{bretto2006hypergraphs}
Alain Bretto.
\newblock Hypergraphs and the helly property.
\newblock {\em Arts Comb}, 78, 2006.

\bibitem{bretto2010factorization}
Alain Bretto and Yannick Silvestre.
\newblock Factorization of cartesian products of hypergraphs.
\newblock In {\em COCOON}, pages 173--181. Springer, 2010.

\bibitem{bretto2009cartesian}
Alain Bretto, Yannick Silvestre, and Thierry Vall{\'e}e.
\newblock Cartesian product of hypergraphs: properties and algorithms.
\newblock {\em arXiv preprint arXiv:0909.5032}, 2009.

\bibitem{contisciani2022inference}
Martina Contisciani, Federico Battiston, and Caterina De~Bacco.
\newblock Inference of hyperedges and overlapping communities in hypergraphs.
\newblock {\em Nature Communications}, 13(1):7229, 2022.

\bibitem{entringer1976distance}
Roger~C Entringer, Douglas~E Jackson, and DA~Snyder.
\newblock Distance in graphs.
\newblock {\em Czechoslovak Mathematical Journal}, 26(2):283--296, 1976.

\bibitem{gu2018graph}
Yuzhou Gu.
\newblock Graph magnitude homology via algebraic morse theory.
\newblock {\em arXiv preprint arXiv:1809.07240}, 2018.

\bibitem{hatcher2002algebraic}
Allen Hatcher.
\newblock {\em Algebraic topology, Cambridge University Press}.
\newblock 2002.

\bibitem{hellmuth2012survey}
Marc Hellmuth, Lydia Ostermeier, and Peter~F Stadler.
\newblock A survey on hypergraph products.
\newblock {\em Mathematics in Computer Science}, 6(1):1--32, 2012.

\bibitem{hepworth2017categorifying}
Richard Hepworth and Simon Willerton.
\newblock Categorifying the magnitude of a graph.
\newblock {\em Homology, Homotopy and Applications}, 2017.

\bibitem{imrich1967kartesisches}
W~Imrich.
\newblock Kartesisches produkt von mengensystemen und graphen.
\newblock {\em Studia Sci. Math. Hungar}, 2:285--290, 1967.

\bibitem{imrich1971schwache}
Wilfried Imrich.
\newblock {\"u}ber das schwache kartesische produkt von graphen.
\newblock {\em Journal of Combinatorial Theory, Series B}, 11(1):1--16, 1971.

\bibitem{leinster2008euler}
Tom Leinster.
\newblock The euler characteristic of a category.
\newblock {\em Documenta Mathematica}, 13:21--49, 2008.

\bibitem{leinster2009maximum}
Tom Leinster.
\newblock A maximum entropy theorem with applications to the measurement of
  biodiversity.
\newblock {\em arXiv preprint arXiv:0910.0906}, 2009.

\bibitem{leinster2013magnitude}
Tom Leinster.
\newblock The magnitude of metric spaces.
\newblock {\em Documenta Mathematica}, 18:857--905, 2013.

\bibitem{leinster2019magnitude}
Tom Leinster.
\newblock The magnitude of a graph.
\newblock In {\em Mathematical Proceedings of the Cambridge Philosophical
  Society}, volume 166, pages 247--264. Cambridge University Press, 2019.

\bibitem{leinster2021magnitude}
Tom Leinster and Michael Shulman.
\newblock Magnitude homology of enriched categories and metric spaces.
\newblock {\em Algebraic \& Geometric Topology}, 21(5):2175--2221, 2021.

\bibitem{lotito2022higher}
Quintino~Francesco Lotito, Federico Musciotto, Alberto Montresor, and Federico
  Battiston.
\newblock Higher-order motif analysis in hypergraphs.
\newblock {\em Communications Physics}, 5(1):79, 2022.

\bibitem{maclane2012homology}
Saunders MacLane.
\newblock {\em Homology}.
\newblock Springer Science \& Business Media, 2012.

\bibitem{sazdanovic2021torsion}
Radmila Sazdanovic and Victor Summers.
\newblock Torsion in the magnitude homology of graphs.
\newblock {\em Journal of Homotopy and Related Structures}, 16(2):275--296,
  2021.

\bibitem{solow1994measuring}
Andrew~R Solow and Stephen Polasky.
\newblock Measuring biological diversity.
\newblock {\em Environmental and Ecological Statistics}, 1:95--103, 1994.

\bibitem{tajima2021magnitude}
Yu~Tajima and Masahiko Yoshinaga.
\newblock Magnitude homology of graphs and discrete morse theory on
  asao-izumihara complexes.
\newblock {\em arXiv preprint arXiv:2110.02458}, 2021.

\bibitem{winitz1981native}
Harris Winitz.
\newblock Native language and foreign language acquisition.
\newblock {\em Annals of the New York Academy of Sciences}, 379, 1981.

\end{thebibliography}
\bigskip

Wanying Bi

Address: $^1$School of Mathematical Sciences, Hebei Normal University, 050024, China.

$^2$Yanqi Lake Beijing Institute of Mathematical Sciences and Applications, 101408, China.

e-mail:wanyingbi1015@163.com

\medskip

Jiangyan Li

Address: Yanqi Lake Beijing Institute of Mathematical Sciences and Applications, 101408, China.

e-mail: jingyanli@bimsa.cn

\medskip

Jie Wu

Address: Yanqi Lake Beijing Institute of Mathematical Sciences and Applications, 101408, China.

e-mail: wujie@bimsa.cn

\medskip

\end{CJK*}
 \end{document}